\newtheorem{theorem}{Theorem}[section]
\newtheorem{proposition}[theorem]{Proposition}
\newtheorem{lemma}[theorem]{Lemma}
\newtheorem{corollary}[theorem]{Corollary}
\newtheorem{remark}[theorem]{Remark}
\newtheorem{example}[theorem]{Example}
\newtheorem{definition}[theorem]{Definition}
\def\diam{\mathrm{diam}}
\def\mcE{\mathcal{E}}
\def\mcF{\mathcal{F}}
\def\sE{\mathcal{E}}
\def\sF{\mathcal{F}}
\def\sG{\mathcal{G}}
\def\sX{\mathcal{X}}
\def\sL{\mathcal{L}}
\def\R{{\mathbbm R}}
\def\bP{{\mathbbm P}}
\def\bE{{\mathbbm E}}
 \def\1{{\mathbbm 1}}
\def\eps{\varepsilon}
\def\wt{\widetilde}
\def\<{\langle}
\def\>{\rangle}
\numberwithin{equation}{section}
\begin{document}
\title[Uniform boundary Harnack principle for non-local operators]{Uniform boundary Harnack principle for non-local operators on metric measure spaces}
 
\author{Shiping Cao}
\address{Department of Mathematics, University of Washington, Seattle, WA 98195, USA}
\email{spcao@uw.edu}
\thanks{}

\author{Zhen-Qing Chen}
\address{Department of Mathematics, University of Washington, Seattle, WA 98195, USA}\email{zqchen@uw.edu}
\thanks{}

\subjclass[2020]{Primary }

\date{}

\keywords{Boundary Harnack principle, harmonic function, purely discontinuous process, exit distribution}

\maketitle

\begin{abstract}
We obtain a uniform boundary Harnack principle (BHP) on any open sets for a large class of non-local operators on metric measure spaces under a jump measure comparability and tail estimate condition, and an upper bound condition on the distribution function for the exit times from balls.
 These conditions are satisfied by any non-local operator $\sL$ that admits a two-sided mixed stable-like heat kernel bounds when  the underlying metric measure spaces have  volume doubling and reverse volume doubling properties. The results of this paper are new even for non-local operators on Euclidean spaces.
In particular, our results give not only the scale invariant but also uniform BHP for the first time for  non-local operators on Euclidean spaces  of both divergence form and    non-divergence form  with measurable coefficients. 
\end{abstract}

\section{\bf Introduction}

Let $\sL$ be an operator  on a locally compact metric space $(\sX,d)$ that generates a Hunt process $X$. 
  For each $x\in \sX$ and $r>0$, denote by  $B(x,r):=\{y\in \mathcal{X}:\,d(x,y)<r\}$   the open ball centered as $x$ with radius $r$. We say the boundary Harnack principle (BHP in abbreviation)  for $\sL$  (or equivalently for the Hunt process $X$)   holds  on an open subset $D\subset\sX$ if   for any $\xi\in\partial D$ and $r\in (0, r_0)$ for some $r_0>0$, there is a constant  $C=C(\xi, r)>1$   so that  
\[
h_1(x)h_2(y)\leq C\,h_1(y)h_2(x)\quad\hbox{ for every }x,y\in D\cap B(\xi,r)
\]
for any non-negative functions $h_1$ and $h_2$ that are $\sL$-harmonic in $D\cap B(\xi,2r)$ and vanish on $B(\xi,2r)\setminus D$ in a suitable sense. If there is  some $R \in (0, \infty]$ so that the above holds for some constant $C>1$ uniform in  $\xi \in \partial D$ and $r\in (0, R)$,  we say the scale invariant BHP holds for $X$ and $\sL$ with radius $R$ on $D$.  If, furthermore, $C>1$ can be taken to be independent of the open subset $D\subset \sX$,  we say the uniform BHP holds for $X$ and $\sL$ with radius $R$. Whether BHP holds or not is a fundamental problem both in  analysis and in probability theory.  When it holds, it has many far-reaching implications. For instance,  the scale invariant BHP plays a crucial role in the study of free boundary problems  in PDE, Martin boundary, Fatou theorem and Green function estimates of $\sL$ in open sets.

In this paper, we show that a uniform BHP holds on any open sets for a large class of non-local operators on metric measure spaces under a jump measure comparability and tail estimate condition and an upper bound condition on the distribution function for the exit times from balls.
 These conditions are satisfied by any non-local operator $\sL$ that admits a two-sided mixed stable-like heat kernel bounds when  the underlying metric measure spaces have  volume doubling and reverse volume doubling properties. 
 The non-local operators studied in this paper can be   symmetric as well as  non-symmetric.  
 See Theorem \ref{T:1.3}, Corollary \ref{C:1.5} and Corollary \ref{C:1.7} below for precise statements. 
 These results  are new even for non-local operators on Euclidean spaces.
In particular, our results give not only the scale invariant but also uniform BHP for the first time for  non-local operators on Euclidean spaces  of both divergence form and    non-divergence form  with measurable coefficients. 
\medskip
  
The study of BHP has a long history. 
For Brownian motion and Laplacian on $\R^d$,  it is easy to see that a scale invariant BHP holds on  smooth $C^2$ domains
from  the two-sided explicit estimates of their Poisson kernels. 
For Laplace operators  in Lipschitz domains, 
 the scale invariant BHP was   proved  independently in 1977-1978   by  Dahlberg \cite{D},  Ancona \cite{Ac} and   Wu \cite{Wu}. 
 This   result    is    later extended to  more general domains such as non-tangential accessible (NTA) domains and uniform domains and to more general  elliptic operators in Lipschitz domains
in \cite{JK, CFMS, FGMS, Ai}.  Bass and Burdzy \cite{BB1} developed a probabilistic approach  to give an alternative proof of  BHP in Lipschitz domains
for Brownian motion.
This probabilistic method was further developed   to obtain a non-scale invariant BHP for uniformly elliptic operators   in H\"older domains and twisted H\"older domains in \cite{BB2, BBB, BB4}.
 There is  now
 a very substantial literature on BHP for differential operators.
 For diffusions on geodesic or length spaces,  the scale invariant BHPs
   on inner uniform domains have been established in \cite{GSC, LS, L} under a  two-sided  
 Gaussian or sub-Gaussian heat kernel bounds assumption. For symmetric diffusions on complete metric doubling spaces, 
 the scale invariant elliptic Harnack principle is equivalent to sub-Gaussian heat kernel estimates by a quasi-symmetry of metric and a time change as shown recently in \cite{BM2,BCM}.  Thus for symmetric diffusions,  the scale invariant BHP holds on inner uniform domains of length spaces  \cite{BM, BCM}
 and on uniform domains of complete metric doubling spaces \cite{aC}
 under the assumption of a scale invariant elliptic Harnack inequality. 
 
  Discontinuous Markov processes and non-local operators are important subjects in probability theory and analysis.
 The study of BHP for non-local operators (or equivalently, discontinuous Markov processes) started relatively  recently. 
  Bogdan \cite{Bo} showed that a  scale invariant BHP holds for $\Delta^{\alpha/2}:=-(-\Delta)^{\alpha/2}$
(or equivalently, for isotropic $\alpha$-stable processes) on $\R^d$ 
 with $\alpha\in (0,2)$ in bounded Lipschitz domains.
This result is  extended to  corkscrew open sets in \cite{SW}. 
It is later shown in  \cite{BKK1} that the uniform BHP holds  for  $\Delta^{\alpha/2}$ on $\R^d$ for arbitrary open sets. The uniform BHP is shown to hold in  arbitrary open sets in \cite{KSV2} for a class of rotationally symmetric pure jump symmetric L\'evy processes  whose jump kernels are comparable to that of subordinate Brownian motions, and in \cite{KM} for a class of subordinate Brownian motion including geometric stable processes. The scale invariant  BHP has also been established  on some  class of open sets
 for some  non-homogeneous non-local operators on $\R^d$ that are generators
of discontinuous processes; see \cite{BBC, KS, CRY}. 
A scale invariant BHP has been show to hold on open sets for $s$-stable-subordination with $s\in (0, 1)$ of symmetric diffusion processes that  have
two-sided sub-Gaussian heat kernel estimates on  Sierpinski carpets and gaskets  in \cite{Sto, KK}  
and on Ahlfors $n$-regular metric measure spaces   in \cite[Example 5.7]{BKK}. 
 A non scale invariant BHP is established on any open sets 
 for isotropic-stable-like operators in non-divergence form is given in \cite{RS}, where the harmonic functions are interpreted
 in the  viscosity sense for  extreme operators.

 The scale invariant BHP   is  also shown to hold on  $C^{1,1}$-smooth open sets for 
subordinate Brownian motion with Gaussian components in \cite{CKSV,  KSV3},
 and for a large class of diffusions with jumps in \cite{CW1}, which may not be L\'evy processes. In a  recent paper by 
 Chen and Wang  \cite{CW2},  a necessary and sufficient condition is obtained for the scale invariant BHP for a large class of discontinuous Hunt processes on metric measure spaces that are in weak duality with another Hunt process. In particular, they  showed that the scale invariant BHP   for subordinate Brownian motion with Gaussian components on $\R^d$   holds on Lipschitz domains satisfying an interior cone condition with common angle $\theta \in (\cos^{-1}(1/\sqrt{d}),\pi)$ but fails on truncated circular cones with angle $\theta \in (0, \cos^{-1}(1/\sqrt{d})]$. 
We mention that a non scale invariant BHP  for discontinuous Feller
processes having strong Feller property  (which can possibly have diffusive components)  in weak duality with another Feller process 
on metric measure  spaces is obtained in Bogdan,  Kumagai and  Kwasnicki \cite{BKK}, under a comparability condition  of
the jump kernel and a Urysohn-type  cuf-off function condition on the domains of the generator of the processes and their duals. See also \cite{H} for  an equivalence relation between  uniform BHP  on  corkscrew open subsets of a compact metric space $(\sX, d)$ 
 and  the generalized triangle property of Green functions,  
 under the assumption of certain properties of the associated Green functions. 

 \smallskip 

  In summary, for discontinuous Markov processes (or equivalently, for non-local operators), up to now, the scale invariant BHPs were  only known to hold on arbitrary open sets for a class of rotationally symmetric pure jump L\'evy processes on $\R^d$ and for $s$-stable-subordination of  symmetric diffusion processes on metric measure spaces. For instance,  
 it was not known  until now if  the scale invariant BHP holds or not on general open sets even for isotropic-stable-like non-local operators of divergence form on Euclidean spaces. The purpose of this paper is to establish not only the scale invariant but also uniform BHP on arbitrary open sets for a large class of non-local operators, symmetric or non-symmetric, on metric measure  spaces.  
 We believe the results of this paper will have many important consequences and applications,
 for example in the study of  free boundary problems, Dirichlet and Green function estimates and identification of Martin boundaries
  for non-local operators. 
 
\smallskip 
  
Now, we state the settings and our main results in detail. 

\smallskip 

Let $X =(X_t,\zeta,\sG_t,\bP_x)$ be a Hunt process on $(\sX, d)$, where  $\{\sG_t; t\geq 0\}$ is the minimal admissible augmented filtration generated by the process $\{X_t; t\geq 0\}$, $\bP_x$ is the distribution of the process starting from $x\in\sX$. The random variable $\zeta\in (0,\infty]$ is the lifetime of the process $X$ so that $X_t=\partial$ for $t\geq\zeta$, where $\partial$ is a cemetery added to $\sX$ as a one-point compactification.  It is well known that every Hunt process has a L\'evy system $(J(x, dy), H)$ that describes how the process jumps, where for every $x\in \sX$,
$J(x, dy)$ is a $\sigma$-finite measure on $\sX \setminus \{x\}$ and $H=\{H_t; t\geq 0\}$ is a positive continuous additive functional of $X$.
That is, for any nonnegative function $f$ on $[0, \infty)\times\sX\times\sX$ vanishing along the diagonal of $\sX\times\sX$ and any   stopping time $T$ with respect to $\{\sG_t; t\geq 0\}$, 
\begin{equation}\label{Levysys}
\bE_x \sum_{s\leq T}f(s,X_{s-},X_s)=\bE_x\int_0^T\int_\sX f(s,X_s,y)J(X_s,dy)dH_s.
\end{equation}
 Denote by $\mu_H$ the Revuz measure of the positive continuous additive functional $H$ of $X$.
Note that L\'evy system $(J(x, dy), H)$  for a Hunt process is not unique but $J(x, dy)\mu_H(dx)$ is unique. If  $(J(x, dy), H)$ is a  L\'evy system  for $X$, then so does $(\gamma (x)J(x, dy), \widetilde H)$ for any 
  strictly positive function $\gamma$ on $\sX$, where $\widetilde H_t:=\int_0^t \gamma (X_s)^{-1} dH_s$. 
For a Borel measurable set $D\subset \sX$, we denote by $\tau_D$ the first exit time from $D$ by $X$: 
$$
\tau_D=\inf\{t\geq 0:\,X_t\notin D\}.
$$
 The L\'evy system formula \eqref{Levysys}
  in particular implies that for any proper open subset $D\subset \sX$ and any bounded $\varphi \geq 0$ on $D^c$,
\begin{equation}\label{e:exit}
\bE_x\left[ \varphi (X_{\tau_D}); X_{\tau_D} \not=X_{\tau_D -} \right]  =\bE_x\int_0^{\tau_D}  \int_{D^c} \varphi (y) J(X_s,dy)dH_s.
\end{equation}
Denote by $\sL$ the infinitesimal generator of the Hunt process $X$.
 
 Throughout this paper,  $\phi:[0,\infty)\to [0,\infty)$ is  a strictly increasing function such that $\phi(0)=0$, $\phi(1)=1$ and  there exists $C\in(1,\infty)$ so that 
\begin{equation}\label{phiD}
\phi(2r)\leq C\,\phi(r)\ \hbox{ for all }r>0. 
\end{equation}
Note that \eqref{phiD} is equivalent to the existence of positive constants $c \geq 1$ and $\beta >0$ so that 
\begin{equation}\label{eqnphi}
\frac{\phi(R)}{\phi(r)}\leq c\Big(\frac{R}{r} \Big)^{\beta} 
\quad \hbox{ for all }0<r\leq R<\infty.
\end{equation}

In this paper, we always assume that there is a kernel $J(x,dy)$ such that $(J(x,dy),t)$ is a L\'evy system.  We introduce the following 
 jump measure comparability condition  {\bf (Jc)} and 
 jump measure tail estimates condition {\bf (Jt)}.

\begin{definition}\label{conditionJ}\rm
 \begin{enumerate} [(i)]

\item   We say condition $({\bf Jc})_{\bar{r}}$ holds for $\bar{r}\in (0, \infty]$ if there are constants $C_1\in(0,\infty),C_2\in(0,1),C_3\in(1,\infty)$ and $\theta \in(0,\infty)$ so that for any $x, y\in \sX$, 
\begin{equation}\tag{\bf Jc.1}\label{e:Jc1}
J(x,dz)\leq C_1 \left(1+\frac{d(y,z)}{d(x,z)} \right)^\theta J(y,dz)\quad
\hbox{on }\sX  \ \hbox{ if  }  d(x,y)<\bar{r}, 
\end{equation}
and for $r,s\in(0,\bar{r})$, 
\begin{equation}\tag{\bf Jc.2}\label{e:Jc2}
J(x,B(y,s))\leq C_2\,J(x,B(x,r)^c)\quad \hbox{if }d(x,y)>s+C_3r.
\end{equation}

\item When $\bar{r}=\infty$, we write $({\bf Jc})$ for $({\bf Jc})_\infty$.

\item  We say condition ${\rm \bf (Jt)}_{\phi, \bar r}$ holds for $\bar{r}\in (0,\infty]$ if there are constants $0<C_4\leq C_5$ so that 
\begin{equation}\label{e:Jt} 
\frac{C_4}{\phi(r)}\leq J(x,B(x,r)^c)\leq \frac{C_5}{\phi(r)}
\quad\hbox{ for every }x\in\sX,\,0<r<\bar{r}.
\end{equation}

\item  We say that ${\rm \bf (Jt)}_{\phi, \bar r, \leq }$ (resp. ${\rm \bf (Jt)}_{\phi, \bar r, \geq }$) holds for some $\bar r\in (0, \infty]$ if the upper bound (resp. lower bound) in \eqref{e:Jt} holds.

\item When $\bar r=\infty$, we write ${\rm \bf (Jt)}_{\phi}$ (resp. ${\rm \bf (Jt)}_{\phi,  \leq }$ and ${\rm \bf (Jt)}_{\phi, \geq }$) for  ${\rm \bf (Jt)}_{\phi,\infty}$ (resp. ${\rm \bf (Jt)}_{\phi,\infty, \leq }$ and  ${\rm \bf (Jt)}_{\phi,\infty,\geq}$).

\end{enumerate}
\end{definition}

\begin{remark} \rm 
When $\phi$ is reverse doubling, i.e. there are $C_1,C_2>1$ such that  
\begin{equation}\label{phiRD} 
\phi(C_1r)\geq C_2\phi(r)\quad\hbox{ for every }r>0,
\end{equation}
condition   $({\bf Jt})_{\phi,\bar{r}}$ implies \eqref{e:Jc2}. Indeed, let 
$0<C_4\leq C_5$ be the constants in $({\bf Jt})_{\phi,\bar{r}}$, and   $C_3 :=C_1^k$ with $k= \big[ \frac{ \log (2C_5/ C_4)}{\log C_2} \big]+1$. 
Here for $a\in \R$,  $[a]$ denotes the largest integer not exceeding $a$.
Then, for every  $0< r,s<\bar{r}/C_3$ and $x,y\in\sX$ with $d(x,y)>s+C_3r$, we have by \eqref{phiRD} and $({\bf Jt})_{\phi,\bar{r}}$, 
\begin{align*}
J(x,B(y,s))\leq J(x,B(x,C_3r)^c)\leq\frac{C_5}{\phi(C_3r)}\leq \frac{C_5}{C_2^k\phi(r)}\leq \frac{C_4}{2\phi(r)}\leq \frac{1}{2}J(x,B(x,r)^c). 
\end{align*}
Geometric $\alpha$-stable processes on $\R^d$ with $d\geq 1$ and $\alpha \in (0, 2]$
satisfy  condition  ${\rm \bf (Jt)}_{\phi}$ and  $({\bf Jc})$ with $\phi(r)=\frac1{1-\log(r\wedge1)}\wedge r^\alpha$,
which does not have the reverse doubling property;  see Example \ref{E:geostable}. 
\qed
\end{remark}

We postpone further comments on conditions ${\rm \bf (Jt)}_{\phi, \bar r}$ and {\bf (Jc)$_{ \bar{r}}$}    to Remark \ref{R:1.8}.

\begin{definition} \rm
We say ${\bf (EP)} _{\phi, r_0,\leq}$ holds for $r_0\in (0, \infty]$ if there is a constant  $C>0$  such that 
\begin{equation}\label{e:ep}
\bP_x\big(\tau_{B(x,r)}<t\big)\leq  C  \frac{ t}{\phi(r)}\quad\hbox{ for every } t>0, \,  x\in \sX \hbox{ and } r\in (0,r_0).   
\end{equation}
When $r_0=\diam(\sX)$, we denote ${\bf (EP)} _{ \phi,\diam(\sX), \leq} $ by ${\bf (EP)} _{\phi,\leq}$. 
\end{definition}

\begin{remark} \rm 
Clearly, one can replace the strict inequality $<t$ in \eqref{e:ep} by $\leq t$. 
Moreover, condition ${\bf (EP)} _{\phi, r_0,   \leq}$ is equivalent to the following seemingly stronger condition:
there is a constant  $\tilde C>0$  so that for every $ t>0$,   $x\in \sX$  and   $r\in (0,r_0)$, 
\begin{equation}\label{e:1.7a}
\bP_y\big(\tau_{B(x,r)}<t\big)  \leq     \frac{\tilde C  t}{\phi(r )} \quad \hbox{for every } y\in B(x, r/2).
\end{equation}
This is because  for any $x\in \sX$,  $t>0$ and $r\in (0, r_0)$,
we have by     ${\bf (EP)} _{\phi, r_0,\leq}$ and  \eqref{eqnphi},
$$
\bP_y\big(\tau_{B(x,r)}<t\big) \leq \bP_y\big(\tau_{B(y, r/2)}<t\big) \leq  C  \frac{ t}{\phi(r/2)}\leq      \frac{ \tilde C  t}{\phi(r)},
$$
where $\tilde C= c_2 2^{\beta}  C$.  
 Condition \eqref{e:1.7a} and hence condition ${\bf (EP)} _{\phi, r_0,   \leq}$  can be  stated analytically as follows.
 There is a constant  $\tilde C>0$  so that 
  for every $ t>0$,   $x\in \sX$  and   $r\in (0,r_0)$, 
\begin{equation}\label{e:1.8}
 P^{B(x, r)}_t 1 (y) \geq 1-   \frac{\tilde C  t}{\phi(r)}\quad\hbox{ for   } y\in B(x, r/2).
\end{equation}
Here  $\{P^{B(x, r)}_t; t>0\}$ is the Dirichlet heat semigroup of $\sL$ in $B(x, r)$, or equivalently, the transition semigroup of  the part process $X^{B(x, r)}$ of $X$ killed  upon exiting $B(x, r)$. 
\qed 
   \end{remark}

\medskip

\begin{definition} \rm 
\begin{enumerate} [(i)]
 \item A function $h$ on $\sX$ is said to be {\it regular harmonic}  with respect to $X$ (or equivalently, with respect to $\sL$) 
 in an open subset $U\subset \sX$ if 
$ \bE_x[|h(X_{\tau_U})|;\tau_U<\infty]<\infty$  and $ h(x)=\bE_x[h(X_{\tau_U}); \tau_U<\infty ] $  for every $ x\in U$.

\item  A function $h$ on $\sX$ is said to be {\it   harmonic} with respect to $X$ (or equivalently, with respect to $\sL$)  
in an open subset $U\subset \sX$ if for every relatively compact open subset $D$ of $U$,
$h$ is regular harmonic  with respect to $X$    in $D$.
\end{enumerate} 
\end{definition} 
\medskip

The above definition of harmonicity is probabilistic. We refer the reader to \cite{C} for the equivalence between the probabilistic and analytic notions of harmonicity when the process $X$ is symmetric, and \cite{MZZ} when $X$ is non-symmetric but its infinitesimal generator satisfies a sectorial condition. 

The following are the main results of this paper.

\begin{theorem}\label{T:1.3}
Suppose that   $({\bf Jc})_{\bar{r}}$,  ${\rm \bf (Jt)}_{\phi,\bar r}$ and ${\bf(EP)}_{\phi,r_0,\leq}$ hold for some $\bar{r},r_0\in(0,\infty]$.  Then the uniform boundary Harnack principle holds for $\sL$ with radius $R:= \min\{\bar{r}/2, r_0\}$ on any proper open set $D\subset \sX$ in the following sense.  For  every $\kappa\in (0,2)$, there is a constant $C \in (1,\infty)$ that depends only on $\kappa$ and the parameters in 
$({\bf Jc})_{\bar{r}}$,  ${\rm \bf (Jt)}_{\phi, \bar r}$, ${\bf (EP)} _{\phi, r_0, \leq}$ and $\eqref{eqnphi}$ such that for any proper open set $D\subset \sX$, $r\in (0, R)$, $\xi\in \partial D$ and any two non-negative functions $h_1$ and $h_2$ that are regular harmonic with respect to $X$ in $D\cap B(\xi,2r)$ and vanish on $B(\xi, 2r)\setminus D$, we have
\[
h_1(x)h_2(y)\leq C\,h_1(y)h_2(x)
\quad \hbox{ for any }x,y\in D\cap B(\xi,\kappa r).
\]
\end{theorem}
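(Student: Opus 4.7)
My plan is to establish the uniform BHP by deriving a factorization
\[
h_i(x)\;\asymp\;A(x)\cdot L(h_i),\qquad x\in D\cap B(\xi,\kappa r),
\]
where $A(x)$ depends only on $x$ (not on $h_i$) and $L(h_i):=\int_{B(\xi,2r)^c}h_i(y)\,J(x^*,dy)$ depends only on $h_i$ (not on $x$), for a suitably chosen reference point $x^*\in D\cap B(\xi,\kappa r)$ taken close to $\xi$. Once such a factorization is in hand with multiplicative constants depending only on $\kappa$ and the hypothesized parameters, the BHP inequality $h_1(x)h_2(y)\leq C\,h_1(y)h_2(x)$ follows at once from the cross-cancellation of the factors $A$ and $L$.

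The starting point is the L\'evy-system representation. Set $U=D\cap B(\xi,2r)$. Since $h_i$ is regular harmonic in $U$ and vanishes on $B(\xi,2r)\setminus D$, formula \eqref{e:exit} gives
\[
h_i(x)=\bE_x\!\left[h_i(X_{\tau_U});\,X_{\tau_U}\in B(\xi,2r)^c\right]=\bE_x\int_0^{\tau_U}\!\int_{B(\xi,2r)^c} h_i(y)\,J(X_s,dy)\,ds.
\]
To produce the factorization, I would replace $U$ by a smaller subdomain $U_0:=D\cap B(\xi,\rho r)$ with $\rho\in(\kappa,2)$ chosen close to $(\kappa+2)/2$, apply the same formula to $U_0$, and split the exit of $U_0$ into (a) exits landing in $B(\xi,2r)^c$ and (b) exits landing in the intermediate region $D\cap(B(\xi,2r)\setminus B(\xi,\rho r))$. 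On (a), since the pre-jump point $X_s\in U_0$ is bounded away from $\partial B(\xi,2r)$ and $r<R\leq\bar r/2$ guarantees $d(X_s,x^*)<\bar r$, the condition $({\bf Jc})_{\bar r}$ yields the uniform two-sided comparison
\[
c_1\,J(x^*,dy)\leq J(X_s,dy)\leq c_2\,J(x^*,dy)\quad\text{on }B(\xi,2r)^c,
\]
giving the desired main term $\asymp\bE_x[\tau_{U_0}]\cdot L(h_i)$.

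The principal obstacle is the intermediate-region remainder (b): when $X_s$ or $y$ approaches $\partial B(\xi,2r)$, the comparability in \eqref{e:Jc1} degenerates and the naive factorization breaks. I would handle this by iterating the L\'evy-system / strong-Markov decomposition: at each step, the process sits somewhere in $D\cap(B(\xi,2r)\setminus B(\xi,\rho r))$, and its next exit from a small enough subdomain either jumps into $B(\xi,2r)^c$ (contributing to the main term via the same factorization argument applied at the new starting point) or lands back in the intermediate region. Two ingredients should make the resulting geometric series converge with ratio uniformly less than one: (i) a uniform strictly positive lower bound on the probability that the next exit from a suitable small ball jumps directly into $B(\xi,2r)^c$, coming from the lower bound in ${\rm \bf (Jt)}_{\phi,\bar r}$ together with ${\bf (EP)}_{\phi,r_0,\leq}$; and (ii) \eqref{e:Jc2}, which controls the $J(z,\cdot)$-mass placed on small balls near but outside $\partial B(\xi,2r)$. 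Turning these heuristics into quantitative estimates, with the multiplicative constant depending only on $\kappa$ and the hypothesized parameters (and deteriorating in a controlled way as $\kappa\nearrow 2$), is the most delicate part of the argument and where I expect the bulk of the technical work to lie.
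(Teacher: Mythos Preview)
Your overall plan---prove a factorization $h(x)\asymp A(x)\,L(h)$ via the L\'evy-system formula plus an iteration that pushes the process out of the intermediate annulus---is exactly the strategy the paper follows. The iteration you sketch (geometric decay of the probability of remaining in the annulus, driven by ${\bf (Jt)}_{\phi,\bar r,\geq}$ and ${\bf (EP)}_{\phi,r_0,\leq}$) corresponds to steps (i)--(iii) of the paper's proof. The paper also uses a variable-radius scheme, taking at each step a ball of radius $\gamma(d(Y_{n-1},\xi))\asymp(2r-d(Y_{n-1},\xi))^2/r$, which shrinks as the iteration approaches $\partial B(\xi,2r)$; this is what allows the \eqref{e:Jc1}-comparison to survive with only polynomial deterioration $n^{2\theta}$ rather than blowing up.

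There is, however, a genuine gap in what you describe. Summing your iteration gives, for the remainder term, an upper bound of the form
\[
h(x)\;\leq\;C\,\bE_x[\tau_{U_0}]\,L(h)\;+\;C\,\phi(r)\,L(h)\cdot\bP_x\!\bigl(X_{\tau_{U_0}}\in D\setminus B(\xi,\rho r)\bigr),
\]
because after the first step the process sits at a point of the intermediate annulus where the mean exit time from the next small ball is only bounded by $c\,\phi(r)$, not by $\bE_x[\tau_{U_0}]$. For the factorization to close up you must convert that probability into a mean exit time, i.e.\ show
\[
\bP_x\!\bigl(X_{\tau_{U_0}}\in D\bigr)\;\leq\;C\,\frac{\bE_x[\tau_{U_0}]}{\phi(r)}.
\]
This is \emph{not} a consequence of your ingredients (i) and (ii): when $x$ is very close to $\partial D$ both sides are small, and the whole content of BHP is that the exit-into-$D$ probability vanishes at the same rate as the mean exit time. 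The paper isolates this as the key Proposition~\ref{prop31} and devotes Section~\ref{S:3.1} to its proof via a separate box-method argument: one stratifies $B_D(\xi,3r/4)$ into level sets $U_j$ of the quantity $\bP_x(\tau_D>\tau_{B_D(\xi,r)})+\bE_x[\tau_{B_D(\xi,r)}]/\phi(r)$ and runs an induction on the strata, using the elementary square-root bound of Lemma~\ref{lemma32} (which follows directly from ${\bf (EP)}_{\phi,r_0,\leq}$) to initialize and to control the transitions between strata. Without this proposition your upper bound is $C\,\phi(r)\,L(h)$ rather than $C\,\bE_x[\tau_{U_0}]\,L(h)$, and the BHP does not follow for points near $\partial D$.
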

\medskip 

The above type BHP is called uniform because the comparison constant $C>0$ not only is independent of $\xi \in \sX$ and $r>0$ but also is independent of the open set $D$. This kind of uniform BHP is first obtained for isotropic stable processes in $\R^d$ in \cite{BKK1}.

\smallskip

Let $m$ be a Radon measure on a locally compact metric space $(\sX,d)$ with full support. 
We say  the metric measure space $(\sX, d, m)$
is volume doubling (VD) if there is $C_1\in (1,\infty)$ such that
$$
V(x,2r) \leq C_1 \, V(x,r) \qquad \hbox{ for every }  x\in \sX   \hbox{ and }   r\in (0,\infty),
$$
where $V(x, r):= m(B(x, r))$. 
This is equivalent to the existence of positive constants $c_1$ and $d_1$ so that 
\begin{equation}\label{e:vd}
\frac{ V(x, R)}{V(x, r)} \leq c_1  \Big(\frac{R}{r} \Big)^{d_1}  \quad 
\hbox{ for every }x\in \sX   \hbox{ and }   0<r\leq R<\infty.
\end{equation}
We say that reverse volume doubling property (RVD) holds if there are positive constants $c_2$ and $d_2$ so that 
$$
\frac{V(x,R)} {V(x,r)} \geq  c_2 \Big(\frac{R}{r} \Big)^{d_2}  \quad 
\hbox{ for every }x\in \sX   \hbox{ and }   0<r \leq R  \leq {\rm diam}(\sX).
$$
This is equivalent to the existence of $\lambda_0\geq 2 $ and $C_2 >1$ so that 
\begin{equation}\label{e:rvd}
V(x, \lambda_0 r)   \geq C_2  V(x, r) \quad \hbox{ for every }x\in \sX   \hbox{ and }   0<r    \leq   {\rm diam}(\sX)/\lambda_0.
\end{equation}
It is known that VD implies RVD if $\sX$ is connected; See \cite[Proposition 2.1 and  the  paragraph before Remark 2.1]{YZ}.

\begin{definition} \rm 
We say ${\bf J}_{\phi}$ holds if there is a kernel $J(x,y)$ and  positive  constants $C_1,C_2$ such that $(J(x,dy):=J(x,y)m(dy), t)$  is a L\'evy system for $X$ and
\[
\frac{C_1}{V(x,d(x,y)) \phi\big(d(x,y)\big)}\leq J(x,y)\leq \frac{C_2}{V (x,d(x,y) )\phi\big(d(x,y)\big)}\quad\hbox{ for every }x\not= y\in \sX.
\]
We say that ${\bf J}_{\phi, \leq}$ (resp. ${\bf J}_{\phi, \geq}$) holds if the upper bound (resp. lower bound) in the above formula holds. \medskip 
\end{definition} 

 Suppose $(\sX,d,m)$ is VD and RVD, and $\phi$ is reverse doubling.
We will show in Lemma \ref{lemma22}  that ${\bf J}_\phi$ implies that 
  {\bf (Jc)} and ${\rm \bf (Jt)}_{\phi,\bar r}$  hold with $\bar{r}=\diam(\sX)/\lambda_0$,  
  where $\lambda_0\geq 2 $ is the constant in {\rm RVD} \eqref{e:rvd}. 
Thus we have the following from  Theorem \ref{T:1.3}.

\begin{corollary}\label{C:1.5}
 Suppose  that $(\sX,d,m)$ is {\rm VD}  and {\rm RVD}, and  that the increasing function  $\phi$ is reverse doubling.
 Suppose   ${\bf J}_\phi$ and ${\bf (EP)} _{\phi, r_0,   \leq}$ hold for some $r_0\in (0, \infty]$. 
 Then the uniform boundary Harnack principle holds for $\sL$ with radius $ R:= \frac{\diam(\sX)}{2\lambda_0}\wedge r_0$. That is, for  every $\kappa\in (0,2)$, there is a constant $C \in (1,\infty)$ that depends only on $\kappa$ and the parameters in {\rm VD}, {\rm RVD}, 
 \eqref{eqnphi}, \eqref{phiRD},     ${\bf J}_\phi$ and ${\bf (EP)} _{\phi, r_0,   \leq}$  so  that for any proper open set $D\subset \sX$, $r\in (0, R)$, $\xi\in \partial D$ and any two non-negative functions $h_1$ and $h_2$ that are regular harmonic with respect to $X$ in $D\cap B(\xi,2r)$ and vanish on $B(\xi, 2r)\setminus D$, we have
\[
h_1(x)h_2(y)\leq C\,h_1(y)h_2(x)
\quad \hbox{ for any }x,y\in D\cap B(\xi,\kappa r).
\]
\end{corollary}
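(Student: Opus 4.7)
The plan is to reduce the corollary to Theorem \ref{T:1.3}. Once we know that $\mathbf{J}_\phi$ together with VD, RVD, and reverse doubling of $\phi$ implies both $(\mathbf{Jc})_{\bar r}$ and $\mathbf{(Jt)}_{\phi,\bar r}$ with $\bar r = \mathrm{diam}(\sX)/\lambda_0$ (this is precisely Lemma \ref{lemma22} announced in the paragraph preceding the corollary), we can feed these into Theorem \ref{T:1.3} together with the assumed $\mathbf{(EP)}_{\phi,r_0,\leq}$ and read off the uniform BHP with radius $\min\{\bar r/2, r_0\} = \mathrm{diam}(\sX)/(2\lambda_0) \wedge r_0$. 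The dependence of the comparison constant $C$ on the parameters in VD, RVD, \eqref{eqnphi}, \eqref{phiRD}, $\mathbf{J}_\phi$ and $\mathbf{(EP)}_{\phi,r_0,\leq}$ is tracked through this reduction, since the constants in $(\mathbf{Jc})_{\bar r}$ and $\mathbf{(Jt)}_{\phi,\bar r}$ obtained from Lemma \ref{lemma22} are explicit functions of those parameters.

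The verification of $\mathbf{(Jt)}_{\phi,\bar r}$ from $\mathbf{J}_\phi$ is the usual dyadic-annulus estimate: integrating the pointwise bound $J(x,y) \asymp V(x,d(x,y))^{-1}\phi(d(x,y))^{-1}$ over the annuli $A_k := B(x,2^{k+1}r)\setminus B(x,2^k r)$ gives
\[
J(x,B(x,r)^c) \asymp \sum_{k\geq 0} \frac{m(A_k)}{V(x,2^k r)\phi(2^k r)} \asymp \sum_{k\geq 0}\frac{1}{\phi(2^k r)},
\]
where the geometric summability on the right uses reverse doubling of $\phi$, and the corresponding lower bound requires truncating the sum at scales $\lesssim \mathrm{diam}(\sX)$, which forces the cutoff $\bar r = \mathrm{diam}(\sX)/\lambda_0$. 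For $(\mathbf{Jc.1})$ one compares $J(x,z)$ and $J(y,z)$ pointwise by writing
\[
\frac{J(x,z)}{J(y,z)} \lesssim \frac{V(y,d(y,z))\phi(d(y,z))}{V(x,d(x,z))\phi(d(x,z))}
\]
and bounding both factors by powers of $1 + d(y,z)/d(x,z)$ via VD \eqref{e:vd} and the polynomial growth \eqref{eqnphi}; this yields the desired exponent $\theta = d_1 + \beta$. Condition $(\mathbf{Jc.2})$ then follows free of charge from the remark after Definition \ref{conditionJ}, since $\mathbf{(Jt)}_{\phi,\bar r}$ and reverse doubling of $\phi$ are already in hand.

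The main obstacle is not conceptual but bookkeeping: making the constants uniform in $x\in\sX$ and in the scales $r,s < \bar r$ while handling the truncation at $\mathrm{diam}(\sX)$ requires some care, especially for the \emph{lower} bound in $\mathbf{(Jt)}_{\phi,\bar r}$ where one must exhibit at least one annulus of positive mass at scale $r$, which is exactly where the RVD constant $\lambda_0$ enters and dictates the admissible range $\bar r = \mathrm{diam}(\sX)/\lambda_0$. Apart from this, the reduction to Theorem \ref{T:1.3} is immediate.
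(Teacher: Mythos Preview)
Your proposal is correct and follows essentially the same approach as the paper: the paper derives Corollary \ref{C:1.5} directly from Theorem \ref{T:1.3} via Lemma \ref{lemma22}, which establishes $(\mathbf{Jc})$ and $\mathbf{(Jt)}_{\phi,\bar r}$ from $\mathbf{J}_\phi$ under VD, RVD and reverse doubling of $\phi$ with $\bar r = \mathrm{diam}(\sX)/\lambda_0$, exactly as you outline. Your sketches of the three ingredients---the pointwise comparison for $(\mathbf{Jc.1})$, the dyadic-annulus argument for $\mathbf{(Jt)}_{\phi,\bar r}$, and the derivation of $(\mathbf{Jc.2})$ from $\mathbf{(Jt)}_{\phi,\bar r}$ via the remark after Definition \ref{conditionJ}---match the paper's arguments.
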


\medskip

\begin{definition} \rm
We say the Hunt process $X$ satisfies two-sided heat kernel estimates  ${\bf HK}(\phi, t_0)$ for some $t_0 \in (0, \infty]$ if   $X$ has infinite lifetime and it has a transition density function $p(t,x,y)$ with respect to the measure $m$ on $\sX$ so that  
\begin{align} \label{e:HKE}
c_1 \Big( \frac{1}{V(x,\phi^{-1}(t))}\wedge & \frac{t}{V(x,d(x,y))\phi(d(x,y))}  \Big)  \nonumber
\\& \, \leq p(t,x,y)  \, \leq \, c_2 \Big(  \frac{1}{V (x,\phi^{-1}(t))}\wedge\frac{t}{V(x,d(x,y))\phi(d(x,y))} \Big)
\end{align}
for $x,y\in \sX$ and $t\in(0,t_0)$. When $t_0=\infty$, we simply denote ${\bf HK}(\phi,\infty)$ by ${\bf HK}(\phi)$. 
\end{definition}

\medskip

If the lower bound in \eqref{e:HKE} holds for all $t>0$, then the Hunt process $X$ has infinite lifetime by 
\cite[Proposition 3.1(2)]{CKW}. 
We refer the reader to \cite{CK, CK2, CZ1, CZ2} as well as Section \ref{S:4} 
for examples of  symmetric as well as  non-symmetric Hunt processes   for which ${\bf HK}(\phi, t_0)$ holds. 
See also \cite[Theorem 1.13]{CKW} and \cite[Remark 8.3]{CC} 
for  equivalent characterizations of ${\bf HK}(\phi)$
when $X$ is a symmetric Hunt process. 

\medskip 

Note that by VD \eqref{e:vd}, for any $x, y \in \sX$ and $t>0$, 
\begin{align*}
&\frac{V(y, \phi^{-1}(t))}{V(x, \phi^{-1}(t))}  \leq \frac{V(x, d(x, y)+\phi^{-1}(t))}{V(x, \phi^{-1}(t))}
\leq c_3 \left(1 + \frac{d(x, y))}{\phi^{-1}(t))}\right)^{d_1},\\
& \frac{V(y,d(x,y))}{V(x,d(x,y))}\leq \frac{V(x,2d(x,y))}{V(x,d(x,y))}
\leq c_32^{d_1}.
\end{align*}
So for any $x, y\in \sX$ and $t>0$, 
\begin{equation}\label{e:1.3} 
  \frac{1}{V (x,\phi^{-1}(t))}\wedge\frac{t}{V(x,d(x,y))\phi(d(x,y))}  
\leq   c_32^{d_1} \Big(  \frac{1}{V (y,\phi^{-1}(t))}\wedge\frac{t}{V(y,d(x,y))\phi(d(x,y))} \Big) .
 \end{equation}
Hence under condition ${\bf HK}(\phi, t_0)$, there is a constant $c\geq 1$ so that 
\begin{equation}\label{e:1.4}
p(t, x, y) \leq c \, p(t, y, x) \quad \hbox{for every } x, y\in \sX \hbox{ and } t\in (0, t_0).
\end{equation}

\medskip

It will be shown in Proposition \ref{P:2.1} that 
${\rm\bf HK}(\phi, t_0)$ implies both ${\bf J}_{\phi}$ and $ {\bf (EP)}_{\phi, r_0,   \leq}$ for every $r_0>0$. 
Consequently, we have the following. 

\medskip

\begin{corollary}\label{C:1.7}
  Suppose that the increasing function $\phi$ is reverse doubling.
Suppose that   $(\sX, d, m)$ satisfies {\rm VD}  and {\rm RVD},  and  ${\bf HK}(\phi, t_0)$ holds for some $t_0\in (0, \infty]$.
 The uniform BHP holds for $\sL$ with radius $R$ on any proper open set $D\subset \sX$, where  $R:=\frac{\dim(\sX)}{2\lambda_0}$ if $\diam(\sX)<\infty$, $R$ can be any positive number if $t_0<\infty$ and $\diam(\sX)=\infty$,  and $R:=\infty$ when $t_0={\rm diam} (\sX)= \infty$,   where  $\lambda_0\geq 2 $ is the constant in {\rm RVD} \eqref{e:rvd}.
 More precisely, for every $\kappa\in (0,2)$, there is a constant $C\in (1,\infty)$ that depends only on $\kappa$, $\phi(R)/t_0$ (when $t_0<\infty$) and the bounds in {\rm VD}, {\rm RVD},  \eqref{eqnphi}, \eqref{phiRD}  and  ${\bf HK}(\phi, t_0)$    such that for any proper open set $D\subset \sX$,  $r\in (0, R)$, $\xi\in \partial D$ and any two non-negative functions $h_1$ and $h_2$ that are regular harmonic  in $D\cap B(\xi,2r)$ with respect to the process $X$
and vanish on $B(\xi, 2r)\setminus D$, we have
\[
h_1(x)h_2(y)\leq C\,h_1(y)h_2(x)\quad \hbox{ for any }x,y\in D\cap B(\xi,\kappa r).
\]
\end{corollary}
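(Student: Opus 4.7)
The plan is to reduce Corollary~\ref{C:1.7} to Corollary~\ref{C:1.5}. The hypotheses of Corollary~\ref{C:1.5} are (i) VD and RVD on $(\sX,d,m)$, (ii) reverse doubling of $\phi$, (iii) ${\bf J}_\phi$, and (iv) ${\bf (EP)}_{\phi, r_0, \leq}$ for some $r_0 \in (0,\infty]$. Items (i) and (ii) are assumed in Corollary~\ref{C:1.7}, so what remains is to derive (iii) and (iv) from ${\bf HK}(\phi, t_0)$. This is exactly the content of Proposition~\ref{P:2.1}, which the excerpt asserts will yield both ${\bf J}_\phi$ and ${\bf (EP)}_{\phi, r_0, \leq}$ for every $r_0 > 0$.

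Granting Proposition~\ref{P:2.1}, the proof proceeds by a case analysis in $(t_0, \diam(\sX))$. If $t_0 = \diam(\sX) = \infty$, Proposition~\ref{P:2.1} produces ${\bf (EP)}_{\phi, \leq}$ with a constant uniform in the scale, and Corollary~\ref{C:1.5} applies with $R = \infty$. If $\diam(\sX) < \infty$, take $r_0 = \diam(\sX)$, so that Corollary~\ref{C:1.5} delivers $R = \diam(\sX)/(2\lambda_0)$. Finally, if $t_0 < \infty$ and $\diam(\sX) = \infty$, fix any $R > 0$ and take $r_0 = R$; here the constant in ${\bf (EP)}_{\phi, R, \leq}$ furnished by Proposition~\ref{P:2.1} will depend on $\phi(R)/t_0$, because the heat-kernel bound is only available for $t < t_0$ while the exit-time derivation must use $t$ comparable to $\phi(R)$. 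This dependence is then inherited by the final BHP constant, in agreement with the statement.

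All real work thus lies in Proposition~\ref{P:2.1}. I would prove ${\bf HK}(\phi, t_0) \Rightarrow {\bf J}_\phi$ by testing the Lévy system formula \eqref{Levysys} against functions compactly supported away from the diagonal and comparing the resulting expression with $\int_0^t p(s, x, \cdot)\,ds$; the short-time off-diagonal bound $p(t, x, y) \leq c\, t/(V(x,d(x,y))\phi(d(x,y)))$ then gives the two-sided estimate on the Lévy density with respect to $m$. I would prove ${\bf HK}(\phi, t_0) \Rightarrow {\bf (EP)}_{\phi, r_0, \leq}$ by first using the heat-kernel upper bound together with VD to obtain $\bP_y(X_t \notin B(x, r)) \leq c\, t/\phi(r)$ for $y \in B(x, r/2)$ and $t$ bounded in terms of $\phi(r) \wedge t_0$, and then upgrading this to the exit-time tail bound \eqref{e:1.7a} by the standard iteration at $\tau_{B(x,r)}$ via the strong Markov property. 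The main obstacle is the bookkeeping in the finite-horizon case $t_0 < \infty$: the factor $\phi(R)/t_0$ must be tracked through every estimate so that the comparison constant in the final BHP has exactly the dependence claimed.
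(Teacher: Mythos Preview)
Your proposal is correct and matches the paper's approach exactly: the paper derives Corollary~\ref{C:1.7} as an immediate consequence of Corollary~\ref{C:1.5} via Proposition~\ref{P:2.1} (which supplies ${\bf J}_\phi$ and ${\bf (EP)}_{\phi,r_0,\leq}$ from ${\bf HK}(\phi,t_0)$), and your case analysis on $(t_0,\diam(\sX))$ together with the tracking of the $\phi(R)/t_0$ dependence through Proposition~\ref{P:2.1}(a) is precisely what is needed. Your sketch of the proof of Proposition~\ref{P:2.1} itself is also along the right lines, though that proposition is treated as a separate result in the paper rather than part of the proof of Corollary~\ref{C:1.7}.
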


\medskip

The proof of  the main result, Theorem \ref{T:1.3}, will be given in Section \ref{S:3.1}.  
The approach of this paper is purely probabilistic. It is quite robust, and works for a large class of non-local operators including those 
with very light intensity for small jumps such as the generators of  geometric stable processes.
 We do not need estimates of potential kernels and Poisson kernels of the Hunt process. In particular, our method applies to non-symmetric non-local operators, and we do not require the existence of a dual process.  See Examples \ref{E:4.6} and \ref{E:4.7} 
in Section \ref{S:4} for an illustration.

\medskip

We now make some comments about the  conditions  ${\bf (Jc)}_{\bar{r}}$  and ${\rm \bf (Jt)}_{\phi, \bar r}$.

\begin{remark}\label{R:1.8}   \rm  
 \begin{enumerate} [(i)]
 \item  Clearly, if $({\bf Jc})_{\bar{r}}$ holds for some $\bar{r}\in(0,\infty]$, then ${\bf (Jc)}_{\bar{s}}$ holds for every 
$\bar{s} \in (0, \bar{r})$.  The same is true for   condition   ${\bf (Jt)}_{\phi, \bar r}$.

 \item Suppose  that $m$ is a measure on $\sX$ and $\psi$ is an increasing positive function on $(0, \infty)$ that has the doubling property,
that is, there is some $c>0$ so that $\psi (2r) \leq c\psi (r)$ for every $r\in (0, \infty)$. 
The latter is equivalent to that there are positive constants $c\geq 1$ and $\theta>0$ so that 
$$ 
\frac{\psi (R)}{\psi (r) } \leq c (R/r)^\theta  \quad \hbox{  for any } 0<r<R.
$$ 
If $J(x, z)$ is a positive function on $\sX\times \sX \setminus {\rm diagonal}$  so that 
\begin{equation}\label{Jump}
\frac{c_1}{\psi( d(x,z) )} \leq J(x, z) \leq \frac{c_2} {\psi (  d(x,z) )}  \quad \hbox{for } x\not= z \in \sX
\end{equation}
for some constants $c_2>c_1>0$.  Then $J(x, dz) =J(x,z)m(dz)$ satisfies  \eqref{e:Jc1}. 

  If in addition $(\sX,d,m)$ has the property that there is $c_3\in (0, 1)$ such that
\begin{equation}\label{geo}
m\big(B(y,s)\cap (B(x,R)\setminus B(x,r))\big)\leq c_3m(B(x,R)\setminus m(x,r))
\end{equation}
for  any $R>r>0$, $s>0$  and $x,y\in\sX$ with $d(x,y)>r+s$, then by \eqref{eqnphi}
 $J(x, dz) =J(x,z)m(dz)$ has the property that 
\begin{eqnarray}
&&\ J\big(x,\big(B(x,r)\cup B(y,s)\big)^c\big)  \nonumber \\
&=&\sum_{k=0}^\infty J\big(x, B(y,s)^c\cap(B(x,2^{k+1}r)\setminus B(x,2^kr)) \big)  \nonumber  \\
&\geq & \sum_{k=0}^\infty \frac{c_1}{\psi(2^{k+1}r)} m\big(B(y,s)^c\cap(B(x,2^{k+1}r)\setminus B(x,2^kr)) \big) \nonumber  \\
&\geq& \sum_{k=0}^\infty  \frac{ c_1}{ c 2^{\theta}  \psi(2^kr)} \cdot  (1-c_3)\,m\big(  B(x,2^{k+1}r)\setminus B(x,2^kr)  \big)  \nonumber \\
&\geq & \sum_{k=0}^\infty \frac{ c_1  (1-c_3)  }{ c 2^{\theta}  \psi(2^kr)} \, m\big( B(y,s)\cap(B(x,2^{k+1}r)\setminus B(x,2^kr)) \big)
\nonumber \\
&\geq & c_4  \,J\big(x,B(y,s)\big).   \label{e:1.17}
\end{eqnarray}
where $c_4:=  \frac{ c_1  (1-c_3)  }{ c2^\theta  c_2  }$.  
It follows that   
\begin{align*} 
J(x,B(x,r)^c)&=J(x,B(y,s))+J(x,(B(x,r)\cup B(y,s))^c)\\
&\geq (1+c_4)J(x,B(y,s))
\end{align*}
and  so  \eqref{e:Jc2} holds. Consequently,  $({\bf Jc})$ holds.

Note that the Euclidean space $(\R^d,d,dx)$ equipped with Lebesgue measure
satisfies condition \eqref{geo} with $c_3=1/2$.  

\item  More generally, suppose that there are $ \bar{r} \in(0,\infty)$, $c\geq 1 $ and $\theta>0$ so that 
\begin{equation}\label{e:1.7}
\begin{cases}
\frac{\psi (R)}{\psi (r) } \leq c (R/r)^\theta  \quad &\hbox{for any }0<r<R\leq  \bar r, 
     \\
\psi (r+1) \leq c \phi (r)  \quad & \hbox{for any } r\geq  \bar r, 
\end{cases} 
\end{equation} 
and  $J(x,z)$ is a kernel satisfying \eqref{Jump}. Then, $J(x,z)m(dz)$ satisfies  \eqref{e:Jc1}.
Indeed,  for every $ x,  y, z \in\sX$  with $d(x,y)<  \bar r $,    \eqref{e:Jc1}   clearly holds when $d(x, z) \geq d(y, z)$ or 
when $d(y, z)\leq  \bar r $. When  $ \bar r \leq d(x, z) \leq d(y, z)$,  by \eqref{e:1.7}
 \begin{eqnarray*}
\frac{J(x, z)}{J(y, z)} \lesssim \frac{\psi (d(y, z))}{\psi (d(x, z))} \leq 
  \frac{\psi (d(x, y) + d(x , z))}{\psi (d(x, z))} \leq \frac{\psi (  \bar r  + d(x, z))}{\psi (d(x, z))}
    \lesssim 1 .
 \end{eqnarray*}
 When    $d(x, z)<\bar r< d(y, z)$ and $d(x, y) \leq \bar r$, 
$$ \bar r <d(y, z)\leq d(y, x) + d(x, z) \leq 2\bar r
$$
and so by \eqref{e:1.7},
$$ 
\frac{J(x, z)}{J(y, z)} \lesssim  \frac{\psi (d(y, z))}{\psi (d(x, z))}  \leq  \frac{ c^{ [\bar r]+1}\psi (\bar r)}{\psi (d(x, z))}
\lesssim  \left( \frac{\bar r}{d(x, z)} \right)^\theta 
\lesssim  \left( \frac{d(y, z)}{d(x, z)} \right)^\theta . 
$$
  This proves  that $J(x,z)m(dz)$ satisfies  \eqref{e:Jc1}. By the same reasoning as in \eqref{e:1.17}, 
   if \eqref{geo} holds for $(\sX,d,m)$, then $J(x,z)m(dz)$ satisfies $({\bf Jc})_{\bar{r}}$.

Function $\psi (r)=e^{\lambda r^\beta}$ with $\lambda >0$ and $\beta \in (0, 1]$ 
has the property that $\psi (r+1)\leq e^\lambda  \psi (r) $ for every $r>0$. 
Thus an increasing positive function $\psi$ satisfying condition \eqref{e:1.7} could have exponential 
or sub-exponential growth at infinity. 
Thus conditions  ${\bf (Jc)}_{\bar r}$  and ${\bf (Jt)}_{\phi, \bar r}$ with finite $\bar r$ allow exponential and sub-exponential decays in $J(x, z)$ as $d(x, z)\to \infty$.  

\item   The scale invariant BHP may fail if condition ${\bf (Jc)}_r$ is not satisfied even on smooth  Euclidean domains.  
For example, for truncated $\alpha$-stable L\'evy processes on $\R^d$ where $J(x, dy)=\frac{c}{|x-y|^{d+\alpha}} \1_{\{|x-y|\leq 1\}}  dy $, 
it is shown in \cite[Section 6]{KS}    that even non-scale invariant BHP can fail on some smooth domains in $\R^d$. 
It is mentioned  at the end of Example 5.14 in \cite{BKK} but without details that   the  scale invariant BHP can fail on some open set
for L\'evy processes having  L\'evy measure $\nu (dz)=\frac{c}{|z|^{d+\alpha} e^{|z|^\beta}} dz$ with $0<\alpha<2$ and $\beta >1$. 
\qed
\end{enumerate} 
\end{remark}
   
\smallskip

We point out that {\bf HK}$(\phi,t_0)$ implies that $X$ is of pure jump type if $p(t,x,y)$ is symmetric, that is, 
if $p(t,x,y)=p(t,y,x)$ for each $t>0$ and $x,y\in\sX$. Here, pure jump type means that the associated Dirichlet form has no  local part nor killing part. See Lemma \ref{lemmaj1} and Corollary \ref{coroj2} for a proof and the precise statement. 
   When $p(t,x,y)$ is non-symmetric, we do not have such an assertion.

\smallskip

To illustrate  the wide scopes of the main results of this paper,  several concrete examples are given in   Section \ref{S:4} for which the uniform BHP is shown to hold on  any open set of the state space. Here is a partial list of  these examples. 

\begin{enumerate} [(i)]
\item (Example \ref{E:4.4}(i)) Uniform BHP holds on any open sets for any  stable-like non-local operators of divergence form 
on $\R^d$   with measurable coefficients that are bounded between  two positive constants. In fact, it holds  for any   mixed stable-like 
non-local operators of divergence form  on $\R^d$ with uniformly elliptic and bounded measurable coefficients.

\item (Example \ref{E:4.4}(ii)) Uniform BHP holds on any open sets for  a large class of isotropic unimodal L\'evy processes on $\R^d$. 

\item (Example \ref{E:4.4}(iii)) Let $D$ be an open  Ahlfors $d$-regular open subset of $\R^d$. Uniform BHP holds on any open sets  of $\overline D$  for symmetric reflected stable-like processes on $\overline D$. 
In particular, the scale invariant BHP holds for the open set $D$ itself. This   extends significantly the BHP obtained in \cite[Theorem 1.2]{BBC}  for censored stable processes where 
the open set $D$ is  assumed to be $C^{1,1}$-smooth. 
  
\item (Example \ref{E:4.7}) Uniform BHP holds on any open sets for   stable-like non-local   operators  of non-divergence form 
with bounded measurable coefficients. 
 This example   significantly extends the non-scale-invariant BHP result in \cite{RS}.
 
 \item (Example \ref{E:4.8}) Uniform BHP in fact holds on any open sets for   mixed stable-like non-local operators  of non-divergence form 
with bounded measurable coefficients.

\item (Example \ref{E:geostable}) 
Uniform BHP  holds on any open sets for any geometric $\alpha$-stable-like processes on $\R^d$ with $d\geq 1$ and $\alpha \in (0, 2]$.

\item (Examples \ref{E:4.5} and \ref{E:4.9})
Uniform BHP  holds on any open sets for tempered  mixed stable-like non-local operators with bounded measurable coefficients, where the jump kernel decays exponentially or sub-exponentially at infinity.

\item (Example \ref{E:4.10})
Uniform BHP holds on any open sets for SDEs driven by a large class of isotropic L\'evy processes including isotropic mixed stable processes. 
\end{enumerate} 
  
\medskip

The rest of the paper is organized as follows. In Section \ref{S:2}, we discuss various exit time estimates and consequences of heat kernel estimates ${\rm\bf HK}(\phi, t_0)$.  Section \ref{S:3} is devoted to the proof of the main result of this paper, Theorem \ref{T:1.3}. 
Our approach is probabilistic. A key exit probability estimate is given in Subsection \ref{S:3.1}, whose proof uses a box method argument. 
In Section \ref{S:4}, we first show that for a symmetric Markov process $X$, heat kernel estimate  {\bf HK}$(\phi,t_0)$ implies that 
the Dirichlet form for $X$ is regular and of pure jump   type, and that its jump measure has a density kernel.  Several concrete families of discontinuous Markov processes  are    then given for which the uniform BHP holds on arbitrary open sets.

\smallskip

Notations.  In this paper, we use $:=$ as a way of definition. For $a, b\in \R$,  $a\vee b:= \max\{a, b\}$, $a\wedge b := a \wedge b$, $a^+:=a\vee 0$ and $a^-:=(-a)\vee 0$. 
  For $A,  A_1\subset \sX$, $x\in \sX$ and $r>0$, we define 
\begin{align*}
d(A,A_1 )&:=\inf\{d(x,y):x\in A,\,y\in A_1\},\\ 
d(x,A)=d(A,x)&:=\inf\{d(x,y):\,y\in A\},\\
B_A(x,r)&:=B(x,r)\cap A.
\end{align*}
We denote the closure of $A$ in $(\sX,d)$ by $\overline{A}$, and denote the boundary of $A$ in $(\sX,d)$ by $\partial A$.
 Denote by $C(\sX)$ the space of continuous functions on $(\sX, d)$, and $C_c(\sX)$   the space of compactly supported continuous functions on $(\sX,d)$. For $f\in C(\sX)$, $\|f\|_\infty:=\sup_{x\in\sX}|f(x)|$ and $\operatorname{supp}[f]=\overline{\{x\in\sX:\,f(x)\neq 0\}}$. 
Let $C_b (\sX):=\{f\in C(\sX): \| f\|_\infty<\infty\}$.   
 For an open set $D\subset \sX$ and $x\in D$, we denote by  $d_D(x):=\inf\{y\in D^c: d(x, y)\}$  the distance from $x$ to $D^c$.

\medskip

\section{\bf  Implications of heat kernel estimates ${\rm\bf HK}(\phi, t_0)$}\label{S:2} 

 In this section, we consider the relationship between the conditions of Corollaries \ref{C:1.5}, \ref{C:1.7} and Theorem \ref{T:1.3}, and we list some easy consequences of $ {\bf (Jc)}_{\bar r} $,
${\rm \bf (Jt)}_{\phi, \bar r}$ and ${\bf (EP)}_{\phi,r_0,\leq}$.

\medskip 

Throughout this paper, we assume $m$  is a Radon measure on a locally compact metric space $(\sX,d)$ with full support, 
and $\phi$ is a strictly increasing function on $[0, \infty)$  satisfying  \eqref{eqnphi}  with  $\phi(0)=0$ and  $\phi(1)=1$. 
First, we show that  when $\phi$ is reverse doubling,  
\[
{\rm VD}+{\bf HK}(\phi,t_0)\Longrightarrow {\bf J}_\phi+ {\bf (EP)}_{\phi,r_0,\leq},
\]
 where $r_0=\infty$ if $t_0=\infty$, and $r_0$ can be any positive number if $t_0<\infty$. 
   We  recall the following lemma from \cite[Lemma 2.1]{CKW}.

\begin{lemma}\label{L:2.1}
  Suppose that $\phi$ is reverse doubling, and   that $(\sX,d,m)$ is {\rm VD}. 
  There is a constant $c>0$ that depends only on the bounds in \eqref{phiRD}  and \eqref{e:vd}   
   so that for every $x\in \sX$ and $r>0$,
$$
\int_{B(x, r)^c} \frac1{V(x, d(x,y)) \phi (d(x, y))} m(dy) \leq \frac{c}{\phi(r)}.
$$
\end{lemma}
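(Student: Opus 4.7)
The plan is to decompose the exterior $B(x,r)^c$ into dyadic annuli centered at $x$ and estimate the integral on each piece separately, then sum a geometric series coming from the reverse doubling of $\phi$.

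First, set $A_k := B(x, 2^{k+1} r) \setminus B(x, 2^k r)$ for $k = 0, 1, 2, \ldots$, so that $B(x,r)^c = \bigcup_{k \geq 0} A_k$. On $A_k$, the function $d(x,\cdot)$ satisfies $2^k r \leq d(x,y) < 2^{k+1} r$, and since both $V(x,\cdot)$ and $\phi$ are increasing in the radius, we have
\[
\frac{1}{V(x, d(x,y)) \phi(d(x,y))} \leq \frac{1}{V(x, 2^k r) \phi(2^k r)}\quad \text{on } A_k.
\]
Integrating and using $m(A_k) \leq V(x, 2^{k+1}r)$ together with the volume doubling bound $V(x, 2^{k+1} r) \leq C_{\rm VD}\, V(x, 2^k r)$, one obtains
\[
\int_{A_k} \frac{1}{V(x, d(x,y)) \phi(d(x,y))} \, m(dy) \leq \frac{C_{\rm VD}}{\phi(2^k r)}.
\]

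Next, I would use the reverse doubling assumption \eqref{phiRD} to control $\phi(2^k r)$ from below. Let $C_1, C_2 > 1$ be the constants in \eqref{phiRD}. Iterating gives $\phi(C_1^n r) \geq C_2^n \phi(r)$ for every integer $n \geq 0$. For each $k$, choose $n = n(k)$ to be the largest integer with $C_1^n \leq 2^k$, so that $n \geq k \log 2 / \log C_1 - 1$. Since $\phi$ is increasing,
\[
\phi(2^k r) \geq \phi(C_1^n r) \geq C_2^n \phi(r) \geq C_2^{-1}\, \bigl(C_2^{\log 2/\log C_1}\bigr)^k \phi(r).
\]
Writing $\rho := C_2^{\log 2/\log C_1} > 1$, this yields $\phi(2^k r)^{-1} \leq C_2 \rho^{-k}/\phi(r)$.

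Combining and summing,
\[
\int_{B(x,r)^c} \frac{1}{V(x, d(x,y)) \phi(d(x,y))} \, m(dy) \leq \sum_{k = 0}^{\infty} \frac{C_{\rm VD}}{\phi(2^k r)} \leq \frac{C_{\rm VD} C_2}{\phi(r)} \sum_{k=0}^{\infty} \rho^{-k} = \frac{c}{\phi(r)},
\]
where $c$ depends only on $C_{\rm VD}$ and on $C_1, C_2$. There is no real obstacle here; the only thing to be careful about is that the constant $c$ depends solely on the parameters in \eqref{e:vd} and \eqref{phiRD} as claimed, which is transparent from the computation above since the doubling constant $C_{\rm VD}$ comes from \eqref{e:vd} and $\rho$ comes from \eqref{phiRD}.
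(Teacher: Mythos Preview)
Your proof is correct and is the standard dyadic-annuli argument. The paper itself does not give a proof of this lemma but simply recalls it from \cite[Lemma~2.1]{CKW}, where the proof is essentially the same decomposition you have written.
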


The following is a variant  of  condition ${\bf (EP)}_{\phi,r_0,\leq}$ where the restriction on the radius of balls is replaced by the restriction on time.
\begin{definition}
We say condition ${\bf (EP)} _{\phi,\leq,t_0}$ holds for $t_0\in (0, \infty]$ if there is a constant  $C>0$  such that 
\begin{equation}\label{e:ept0}
\bP_x\big(\tau_{B(x,r)}<t\big)\leq  C  \frac{ t}{\phi(r)}\quad\hbox{ for every } t\in(0,t_0), \,  x\in \sX \hbox{ and } r>0.   
\end{equation}
\end{definition} 

Note that when $t_0=\infty$, condition ${\bf (EP)} _{\phi,\leq,\infty}$ is just ${\bf (EP)} _{\phi,\leq}$.

\begin{proposition} \label{P:2.1}
\begin{enumerate} [\rm (a)]
\item ${\bf (EP)}_{\phi,\leq,t_0}\Longrightarrow {\bf (EP)}_{\phi,r_0,\leq}$ for every $r_0>0$. The constant in ${\bf (EP)}_{\phi,r_0,\leq}$ \eqref{e:ep} can be taken to be $C\vee \frac{\phi(r_0)}{t_0}$, where $C$ is the constant in ${\bf (EP)} _{\phi,\leq,t_0}$ \eqref{e:ept0}. 

\item   Suppose that $\phi$ is reverse doubling. Then
${\rm VD}+{\bf HK}(\phi, t_0)\Longrightarrow {\bf J}_{\phi}+{\bf (EP)}_{\phi,\leq,t_0/2}$.
 \end{enumerate}
\end{proposition}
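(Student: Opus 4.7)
Part (a) is a two-case split on the size of $t$ relative to $t_0$. If $t<t_0$, the hypothesis ${\bf (EP)}_{\phi,\leq,t_0}$ directly yields $\bP_x(\tau_{B(x,r)}<t)\leq Ct/\phi(r)$. If $t\geq t_0$ and $r\in(0,r_0)$, then monotonicity of $\phi$ gives $\phi(r)\leq \phi(r_0)$ and $t/t_0\geq 1$, so the trivial bound gives
\[
\bP_x(\tau_{B(x,r)}<t)\leq 1\leq \frac{\phi(r_0)}{t_0}\cdot\frac{t}{\phi(r)}.
\]
Taking $C\vee\phi(r_0)/t_0$ gives the claimed constant.

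For part (b), I would handle ${\bf (EP)}_{\phi,\leq,t_0/2}$ first. From ${\bf HK}(\phi,t_0)$, reverse doubling of $\phi$, VD, and Lemma \ref{L:2.1}, I obtain the one-point displacement estimate
\[
\bP_x(d(x,X_t)\geq r)=\int_{B(x,r)^c}p(t,x,y)\,m(dy)\leq c_2 t\int_{B(x,r)^c}\frac{m(dy)}{V(x,d(x,y))\phi(d(x,y))}\leq \frac{C_1 t}{\phi(r)}
\]
valid for every $x\in\sX$, $r>0$ and $t\in(0,t_0)$. I then run the classical strong-Markov/halving argument: if $\tau:=\tau_{B(x,r)}<t$ and $X_{2t}\in B(x,r/2)$, then since any $z\in B(x,r)^c$ satisfies $B(x,r/2)\subset B(z,r/2)^c$, the strong Markov property gives
\[
\bP_x(\tau<t,X_{2t}\in B(x,r/2))\leq \bP_x(\tau<t)\,\sup_{s\leq 2t,\,z\in\sX}\bP_z(d(z,X_s)\geq r/2).
\]
By the displacement bound the supremum is $\leq 1/2$ once $t/\phi(r)\leq\eta$ for a small universal $\eta>0$, and rearranging yields $\bP_x(\tau<t)\leq 2\bP_x(d(x,X_{2t})\geq r/2)\leq C_2 t/\phi(r)$ after invoking the doubling of $\phi$; in the complementary range $t/\phi(r)>\eta$ the trivial bound $\bP\leq 1\leq \eta^{-1}t/\phi(r)$ suffices. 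The restriction $t<t_0/2$ is forced by needing $2t<t_0$ so that $p(2t,\cdot,\cdot)$ satisfies the HK upper bound.

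For ${\bf J}_\phi$, the strategy is to recover a pointwise jump-density from the short-time behaviour of $p(t,x,y)$ via the L\'evy system formula \eqref{Levysys}. Fix $x\in\sX$ and a non-negative continuous $f$ compactly supported in $\sX\setminus\{x\}$. On the one hand,
\[
\bE_x[f(X_t)]=\int f(y)\,p(t,x,y)\,m(dy),
\]
so by the two-sided HK bounds, for all sufficiently small $t$ (smaller than $\phi$ of the distance from $x$ to $\operatorname{supp}(f)$),
\[
c_1\int f(y)\,\frac{m(dy)}{V(x,d(x,y))\phi(d(x,y))}\leq \frac{\bE_x[f(X_t)]}{t}\leq c_2\int f(y)\,\frac{m(dy)}{V(x,d(x,y))\phi(d(x,y))}.
\]
On the other hand, applying \eqref{Levysys} to $F(s,u,v):=\mathbbm{1}_{[0,t]}(s)f(v)$ and using the strong Markov property at the first exit from a small ball $B(x,\rho)$ (with $\rho$ smaller than the distance from $x$ to $\operatorname{supp}(f)$) shows that $t^{-1}\bE_x[f(X_t)]\to \int f(y)\,J(x,dy)$ as $t\to 0^+$; the multi-jump contributions are of order $t$ times the exit probability ${\bf (EP)}_{\phi,\leq,t_0/2}$ already proved, hence $o(1)$. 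Combining the two displays and letting $f$ range over a separating family of test functions gives $J(x,dy)\ll m(dy)$ on $\sX\setminus\{x\}$ with Radon-Nikodym derivative $J(x,y)$ satisfying the pointwise bounds of ${\bf J}_\phi$.

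\textbf{Main obstacle.} The delicate step is the ${\bf J}_\phi$ identification, because $X$ is not assumed symmetric or pure jump, so one cannot invoke the Beurling-Deny decomposition of a Dirichlet form. The key is to use \eqref{Levysys} as the bridge between pathwise jump counts and the kernel $J(x,dy)$, localising via the strong Markov property so that only single-jump contributions survive to leading order in $t$, while the already-established exit probability bound controls the error terms.
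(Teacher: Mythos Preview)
Your arguments for part (a) and for ${\bf (EP)}_{\phi,\leq,t_0/2}$ in part (b) are correct and coincide with the paper's (the paper simply cites \cite[Lemma 2.7]{CKW} for the latter; your halving argument is precisely that proof).

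For ${\bf J}_\phi$ your strategy is the right one, but the pointwise-in-$x$ execution has a circularity. You assert that for a fixed $x$,
\[
t^{-1}\bE_x[f(X_t)]\to \int f(y)\,J(x,dy)\qquad (t\to 0).
\]
Decomposing at $\tau_\rho:=\tau_{B(x,\rho)}$ and applying the L\'evy system, the leading term is $t^{-1}\bE_x\int_0^{t\wedge\tau_\rho}Jf(X_s)\,ds$, which is sandwiched between $\inf_{B(x,\rho)}Jf$ and $\sup_{B(x,\rho)}Jf$. To collapse this to $Jf(x)$ as $\rho\to 0$ you would need continuity (or a Lebesgue-point property) of $z\mapsto J(z,\cdot)$ at $x$---but that is exactly the regularity implied by ${\bf J}_\phi$ that you are trying to establish. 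Without it, you only obtain $\inf_{B(x,\rho)}Jf\leq c_2\int f/(V\phi)\,dm$ and $\sup_{B(x,\rho)}Jf\geq c_1\int f/(V\phi)\,dm$, which do not pin down $J(x,\cdot)$ itself.

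The paper breaks this circularity by integrating in the starting point as well: it tests against $f(x)g(y)$ with $\operatorname{supp}[f]\cap\operatorname{supp}[g]=\emptyset$, so the comparison is between the \emph{measures} $J(x,dy)\,\mu_H(dx)$ and $(V\phi)^{-1}\,m(dx)m(dy)$. For the upper bound it combines the HK \emph{lower} bound (to bound $\liminf_{t\to 0}t^{-1}\int_0^t P^U_s f$ from below by $c\,f$) with Fatou's lemma; this step also forces $\mu_H\ll m$, justifying the normalisation $H_t=t$ (which your sketch takes for granted). Once ${\bf J}_{\phi,\leq}$ is known $m\times m$-a.e., $Jg$ is locally integrable, and the paper uses $L^1$-convergence of $P^U_t(\1_U Jg)$ to $\1_U Jg$ to compute $\lim_{t\to 0}t^{-1}\int fP_tg\,dm$ exactly, yielding ${\bf J}_{\phi,\geq}$. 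Both bounds are then upgraded from ``$m$-a.e.'' to ``everywhere'' by modifying $J(x,y)$ on a null set, which leaves the L\'evy system identity unchanged.
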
 

\begin{proof}
(a)  Let $r_0>0$.  Since $\phi$ is increasing, we can see that 
\[
\bP_x\big(\tau_{B(x,r)}<t\big)\leq 1\leq \frac{\phi(r_0)}{t_0}\frac{t}{\phi(r)}\quad\hbox{ for every } t\geq t_0, \,  x\in \sX \hbox{ and } r\in (0,r_0).   
\]
Suppose that  ${\bf (EP)}_{\phi,\leq,t_0}$ holds.   The above estimate together with \eqref{e:ept0} implies that 
\[
\bP_x\big(\tau_{B(x,r)}<t\big)\leq \Big( C\vee \frac{\phi(r_0)}{t_0} \Big) 
\frac{t}{\phi(r)}\quad\hbox{ for every } t>0, \,  x\in \sX \hbox{ and } r\in (0,r_0).   
\]
That is, ${\bf (EP)}_{\phi,r_0,\leq}$ holds. 

\medskip

(b) ${\rm\bf HK}(\phi, t_0)\Longrightarrow  {\bf (EP)}_{\phi,\leq,t_0/2}$ holds by the argument as that for  \cite[Lemma 2.7]{CKW}.
We next show ${\rm\bf HK}(\phi, t_0)\Longrightarrow  {\bf J}_{\phi}$.  As mentioned earlier, condition ${\rm\bf HK}(\phi, t_0)$ implies that $X$ is a Feller process having strong Feller property.
Thus  $X$ has a L\'evy system $(J(x, dy), H)$, where $H$ is a positive continuous additive functional of $X$. We divide the proof into three steps. 

  (i) We first show that ${\rm\bf HK}(\phi, t_0)$ implies that for $m$-a.e. $x\in \sX$,    $J(x, dy)$ is abosolutely continuous  with respect to $m(dy)$
and that ${\bf J}_{\phi, \leq}$ holds for $m$-a.e. $x\not= y \in \sX$. Note that it follows from ${\rm\bf HK}(\phi,t_0)$ that 
\begin{equation}\label{e:2.2} 
\lim_{t\to 0} \|P_t \varphi - \varphi \|_{\infty} = 0 \quad \hbox{ for any } \varphi \in C_c(\sX).
\end{equation}   
 Let $f,g\in C_c(\sX)$ be non-negative with $\operatorname{supp}[f] \cap \operatorname{supp}[g] = \emptyset$. 
 Let $U$ be a bounded open neighborhood of $\operatorname{supp}[f]$ such that $g(x)=0$ for each $x\in \overline U$. By the strong Markov property of $X$, for $x\in U$, 
\begin{eqnarray}\label{e:2.3}
 P_tg(x)& :=&
 \bE_x[g(X_t)]  =\bE_x\big[ \big( \bE_{X_{\tau_U}}  g(X_{t-s}) \big) \big|_{s=\tau_U};\tau_U<t\big]  \nonumber \\
&=&\bE_x\big[P_{t-\tau_U}g(X_{\tau_U});X_{\tau_U}\in  \partial U,\tau_U<t\big]    \nonumber \\ 
&& +\bE_x\big[P_{t-\tau_U}g(X_{\tau_U});X_{\tau_U}\in \sX \setminus  \overline{U},\tau_U<t\big].
\end{eqnarray}
Take $r=d({\rm supp}[f],U^c)/2$. By ${\bf (EP)}_{\phi,\leq,t_0/2}$, for $t\in (0, t_0/2)$, 
 \begin{eqnarray}\label{e:2.4}
 && \Big|\frac1t\bE_x\big[P_{t-\tau_U}g(X_{\tau_U});X_{\tau_U}\in \partial U,\tau_U<t\big]\Big|   \nonumber \\
& \leq &  \frac1{t}\bP_x(\tau_U<t)\sup_{ z\in \partial U,\, 0<s\leq t}|P_sg ( z )|  \nonumber \\
&\leq &\frac1{t}\bP_x(\tau_{B(x, r)} <t)\sup_{z \in \partial U,\, 0<s\leq t}|P_sg( z)|   \nonumber \\
&\leq &  \frac{c_1}{\phi(r)}\sup_{ z\in \partial U ,\, 0<s\leq t}|P_s g (z)|, 
 \end{eqnarray}
which, by \eqref{e:2.2},  converges to 0 uniformly in $x\in U$ as $t\to 0$. 
On the other hand, by \eqref{e:2.2} and  ${\bf (EP)} _{\phi,\leq,t_0/2}$,  for $t\in (0, t_0/2)$, 
\begin{eqnarray}\label{e:2.5}
 &&  \frac1t \bE_x\big[ |P_{t-\tau_U}g -g| (X_{\tau_U});X_{\tau_U}\in \sX \setminus  \overline{U},\tau_U<t\big]  \nonumber \\
 &\leq &  \sup_{s\in (0, t]}\|  P_s g -g\|_\infty \frac1t \bP_x ( \tau_U<t)  \nonumber \\
 &\leq &  \sup_{s\in (0, t]}\| P_s g -g\|_\infty \frac1t \bP_x ( \tau_{B(x, r)}<t)  \nonumber \\
  &\leq &  \sup_{s\in (0, t]}\| P_s g -g\|_\infty \frac{c_1}{\phi(r)},
 \end{eqnarray}
which converges to 0 uniformly in $x\in U$ as $t\to 0$. Thus we have by \eqref{e:2.3}-\eqref{e:2.5} that 
\begin{equation}\label{e:2.6a}
\lim_{t\to 0} \left( P_t g(x) - \frac{1}{t} \bE_x\big[ g (X_{\tau_U});X_{\tau_U}\in \sX \setminus  \overline{U},\tau_U<t\big]  \right)=0
\quad \hbox{uniformly on } U.
\end{equation}

Denote by $p^U(t, x, y)$ the transition density function of $X^U$, the subprocess of $X$ killed upon leaving $U$.
That is, 
$$
p^U(t, x, y) = p(t, x, y)- \bE_x \left[ p(t-\tau_U, X_{\tau_U}, y) ;\tau_U<t \right] \quad \hbox{ for } x, y\in U.
$$
In particular we have by ${\bf HK}(\phi,t_0)$  and \eqref{e:1.4} that for $t\in(0,  t_0)$ and $x, y\in U$,   
$$
|p^U(t, x, y)-p(t, x, y)|
\leq  \frac{c_2\,t}{V(y,d(y, U^c))\phi(d(y, U^c))}.
$$
This together with \eqref{e:1.4} and  \eqref{e:2.2} shows that for each $z\in U$, 
\begin{eqnarray}\label{e:2.6}
&& \liminf_{t\to 0} \frac1{t} \int_0^t \int_{{\rm supp}[f]} f(x)  p^U(s, x, z) m(dx)   ds  \nonumber \\
&\geq & \liminf_{t\to 0} \frac1{t} \int_0^t \int_{{\rm supp}[f]} f(x)  \Big(p (s, x, z) - \frac{c_2\,t}{V(z,d(z, U^c))\phi(d(z, U^c))} \Big)m(dx)   ds  
\nonumber \\
&\geq & \liminf_{t\to 0} \frac1{t} \int_0^t \int_{{\rm supp}[f]} f(x)  c_3 p (s, z, x) m(dx) ds \nonumber \\
&=& c_3 f(z). 
\end{eqnarray}

For simplicity, write $\int_{\sX} g(y) J(z, dy)$ by $J g(z)$. 
By the L\'evy system \eqref{Levysys} of $X$, 
\begin{eqnarray}\label{e:2.7}
 &&  \int_\sX f(x) \bE_x\big[ g(X_{\tau_U});X_{\tau_U}\notin \overline{U},\tau_U<t\big]m(dx)  \nonumber \\
&=&   \int_{{\rm supp}[f]} f(x) \bE_x \Big[ \int_0^{\tau_U\wedge t} \int_{\sX \setminus \overline U} 
 g  (y) J(X_s,dy  )dH_s \Big] m(dx)  \nonumber \\
 &=& \int_{{\rm supp}[f]} f(x)  \int_0^t  \int_{U}  p^U(s, x, z)  J g (z ) \mu_H  (dz) \,  ds \,  m(dx)  \\
 &=&   \int_{ U}      \left(  \int_0^t \int_{{\rm supp}[f]} f(x)  p^U(s, x, z) m(dx)   ds\right)     
  J g (z )   \mu_H(dz) .  \nonumber  
  \end{eqnarray}
Combining this with \eqref{e:2.6}, \eqref{e:2.6a}  and ${\bf HK}(\phi, t_0)$, we have by the Fatou's lemma that 
\begin{eqnarray} \label{e:2.8}
&&  \int_{\sX\times \sX } f(z) g(y) J(z, dy) \mu_H  (dz)  \nonumber \\
&\leq& \int_{\sX\times \sX}\Big(\liminf_{t\to 0} \frac1{c_3t} \int_0^t \int_{{\rm supp}[f]} f(x)p^U(s, x, z)m(dx)ds\Big) g(y) J(z, dy) \mu_H  (dz)\nonumber\\  
&\leq & \liminf_{t\to 0} \frac{ 1}{c_3 \, t} \int_\sX f(x) \bE_x\big[ g(X_{\tau_U});X_{\tau_U}\notin \overline{U},\tau_U<t\big]m(dx)  \nonumber \\
&\leq & \liminf_{t\to 0} \frac{ 1}{c_3 \, t}   \int_{\sX} f(x) P_t g(x) m(dx) \nonumber \\
&\leq & \int_{\sX\times \sX}  f(x) g(y) \frac{c_3^{-1} c_4} {V(x,d(x,y))\phi(d(x,y))} m(dx)m(dy) .
\end{eqnarray}
Since the above holds for any non-negative $f, g\in C_c(\sX)$ with ${\rm supp}[f]\cap {\rm supp}[g]=\emptyset$, we conclude that 
\begin{equation}\label{e:2.9a}
J(x, dy)    \mu_H(dx) \leq   \frac{c_3^{-1} c_4} {V(x,d(x,y))\phi(d(x,y))} m(dx)m(dy) \quad \hbox{on } 
\sX\times \sX \setminus {\rm diagonal}. 
\end{equation} 
This in particular implies that $\mu_H(dx)\ll m(dx)$.  Without loss of generality, we can and do assume $H_t=t$, or equivalently, $\mu_H(dx)=m(dx)$.
(Otherwise, letting $\gamma (x):= \frac{\mu_H(dx)}{m(dx)}$, then $(\gamma (x)J(x, dy), t)$ is a L\'evy system of $X$ and define $\gamma (x)J(x, dy)$
to be the new $J(x, dy)$.) 
 It follows from  that \eqref{e:2.9a} there is some $J(x, y)\geq 0$  
 so that  for $m$-a.e. $x\in \sX$,  $J(x, dy)= J(x, y) m(dy)$ on $\sX\setminus \{x\}$   
and 
\begin{equation}\label{e:2.9}
    J(x, y)\leq \frac{  c_3^{-1} c_4} {V(x,d(x,y))\phi(d(x,y))} 
\quad m \hbox{-a.e. on } \sX \setminus \{x\}.
\end{equation} 
This proves that ${\bf J}_{\phi, \leq}$ holds $m$-a.e. $x\not=y$  in  $\sX  $.

\medskip

(ii)    We next  show that ${\bf J}_{\phi, \geq}$ holds  $m$-a.e. $x\not=y$  in  $\sX$.
 Lemma  \ref{L:2.1} and \eqref{e:2.9} in particular implies that  $Jg$ is locally bounded on $\sX \setminus {\rm supp}[g]$. 
As $\overline U\subset \sX \setminus {\rm supp}[g]$, $Jg$ is bounded on $\overline U$. 
 Observe that for any $ h \in C_b(U)$, $P^U_t h (x):=\int_U p^U(t, x, y)h(y)m(dy)= \bE_x[h(X^U_t)]$ 
 converges boundedly to $h$ on $U$ as $t\to 0$, and   that by \eqref{e:1.3} for any $\psi\in L^1(U; m)$ and $t<t_0$,
\begin{eqnarray*}
 \int_U | P^U_t \psi|(x) m(dx)
& \leq & \int_{\sX} \int_{\sX} (|\psi|\1_U)(y)  p(t, x, y) m(dy) m(dx) \\
& \leq & c \int_{\sX} \int_{\sX} (|\psi|\1_U)(y)  p(t, y, x)   m(dx) m(dy) \\
&=& c \int_U |\psi(y)| m(dy).
\end{eqnarray*}
It follows for any $\psi \in L^1(U;  m)$,
$$
\lim_{t\to 0} \int_U |P^U_t \psi (x) -\psi (x)| m(dx)=0.
$$
As $\1_U Jg \in L^1(U;m)$, we have by   \eqref{e:2.6a} and \eqref{e:2.7} that 
\begin{eqnarray}\label{e:aa7}
 &&\lim_{t\to 0}\frac1{t}\int_\sX f(x)P_tg(x)m(dx) \nonumber\\
  &= & \lim_{t\to 0}\frac1{t}  \int_\sX f(x) \bE_x\big[ g(X_{\tau_U});X_{\tau_U}\notin \overline{U},\tau_U<t\big]m(dx)  \nonumber\\
 &=&  \lim_{t\to 0}\frac1{t}  \int_{{\rm supp}[f]} f(x)  \int_0^t  \int_{U}  P^U_s (  J g )(z) m(dz) \,  ds \,  m(dx)  \nonumber\\
 &=& \int_{\sX\times \sX}  f(x) g(y) J(x,dy)m(dx).
\end{eqnarray}
We then deduce from the lower bound in ${\bf HK}(\phi, t_0)$ that there is a constant $c_5>0$ so that 
$m$-a.e. $x\in \sX$,   
\begin{equation}\label{e:2.11}
   J(x, y)\geq \frac{c_5} {V(x,d(x,y))\phi(d(x,y))}  \quad m \hbox{-a.e. on } \sX \setminus \{x\}.
\end{equation} 

\smallskip

(iii) Note that by definition \eqref{Levysys} of a L\'evy system for $X$, 
under the condition ${\rm\bf HK}(\phi)$ (in fact,  it is sufficient that $X$ has a density function $p(t, x, y)$ with respect to $m$),
if $(J(x, dy), t)$ is a L\'evy system for $X$ and $J_1 (x, dy)$ is a kernel so that $J(x, dy)=J_1(x, dy)$ on $\sX \setminus \{x\}$ for $m$-a.e. $x\in \sX$,
then $(J_1 (x, dy), t)$ is also a L\'evy system of $X$. 
Thus it follows from \eqref{e:2.9} and \eqref{e:2.11} that by modifying the values of $J(x, y)$ outside a set having zero $m\times m$  measure, 
we may and do assume that the L\'evy system $(J(x,y)  m(dy), t)$ satisfies \eqref{e:2.9} and \eqref{e:2.11} everywhere on $\sX\times \sX\setminus {\rm diagonal}$;
that is, ${\bf J}_\phi$ holds. 
\end{proof}

\begin{remark} \rm 
In the proof of $ {\rm VD}+{\rm\bf HK}(\phi, t_0)\Longrightarrow{\bf J}_{\phi}$, we do not need the assumption that $X$ is conservative that is imposed in
${\rm\bf HK}(\phi, t_0)$.   \qed 
\end{remark}

 Next, the following lemma asserts that 
\[
{\rm VD}+ {\rm RVD}+ {\bf J}_{\phi}\Longrightarrow {\bf (Jc)}+ {\rm \bf (Jt)}_{\phi, \bar r} 
\]
with $\bar{r}=\diam(\sX)/\lambda_0$, where
 $\lambda_0 \geq 2 $ is  the constant in {\rm RVD} \eqref{e:rvd}.

\begin{lemma}\label{lemma22}
\begin{enumerate}[\rm (a)]
\item ${\rm VD}+{\bf J}_{\phi}\Longrightarrow {\bf (Jc)}$. 

\item   Suppose that $\phi$ is reverse doubling. Then  ${\rm VD}+{\bf J}_{\phi,\leq}\Longrightarrow {\bf(Jt)}_{\phi,\leq}$. 

\item ${\rm VD}+{\rm RVD}+{\bf J}_{\phi,\geq}\Longrightarrow
{\rm \bf (Jt)}_{\phi, \bar r, \geq}$ with $\bar r= \diam(\sX)/\lambda_0$,
where  $\lambda_0 \geq 2$ is  the constant in {\rm RVD} \eqref{e:rvd}. 
\end{enumerate}
\end{lemma}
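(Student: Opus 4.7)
The plan is to unpack the pointwise bounds in $\mathbf{J}_\phi$ and combine them with VD, RVD, and the doubling of $\phi$.

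For part (a), $(\mathbf{Jc.1})$ follows from a pointwise ratio estimate. Using the upper bound on $J(x,z)$ and the lower bound on $J(y,z)$ from $\mathbf{J}_\phi$,
\[
\frac{J(x,z)}{J(y,z)} \leq c\,\frac{V(y,d(y,z))\,\phi(d(y,z))}{V(x,d(x,z))\,\phi(d(x,z))}.
\]
The triangle inequality gives $B(y,d(y,z)) \subset B(x,\,d(x,z)+2d(y,z))$, so VD \eqref{e:vd} bounds the volume ratio by $c'(1+d(y,z)/d(x,z))^{d_1}$; combined with the $\phi$-ratio bound $c''(1+d(y,z)/d(x,z))^\beta$ from \eqref{eqnphi}, this yields $(\mathbf{Jc.1})$ with $\theta=d_1+\beta$ and no restriction on $d(x,y)$. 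For $(\mathbf{Jc.2})$, I plan to take $C_3$ large and split $J(x,B(x,r)^c) = J(x,B(y,s)) + J(x,B(x,r)^c\setminus B(y,s))$. On $B(y,s)$ the hypothesis $d(x,y)>s+C_3 r$ forces $d(x,z)\geq C_3 r$, so $\mathbf{J}_{\phi,\leq}$ together with VD upper bounds $J(x,B(y,s))$; for $C_3\geq 2$ the annulus $B(x,2r)\setminus B(x,r)$ is disjoint from $B(y,s)$, and $\mathbf{J}_{\phi,\geq}$ on that annulus, combined with VD and \eqref{eqnphi}, supplies a lower bound on the remaining piece large enough to force a contraction constant $C_2<1$.

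Parts (b) and (c) reduce to direct integral computations. For (b), since $\phi$ is reverse doubling, Lemma \ref{L:2.1} applies and inserting $\mathbf{J}_{\phi,\leq}$ gives
\[
J(x,B(x,r)^c) \leq c \int_{B(x,r)^c} \frac{m(dz)}{V(x,d(x,z))\,\phi(d(x,z))} \leq \frac{c'}{\phi(r)},
\]
which is $(\mathbf{Jt})_{\phi,\leq}$. For (c), I restrict the lower integral to the single annulus $B(x,\lambda_0 r)\setminus B(x,r)$, where $\lambda_0$ is the RVD parameter in \eqref{e:rvd}; since $d(x,z)\leq \lambda_0 r$ there, $\mathbf{J}_{\phi,\geq}$ gives
\[
J(x,B(x,r)^c) \geq c\,\frac{V(x,\lambda_0 r) - V(x,r)}{V(x,\lambda_0 r)\,\phi(\lambda_0 r)},
\]
and RVD yields $V(x,\lambda_0 r) - V(x,r) \geq c^{*}\, V(x,\lambda_0 r)$ for some $c^{*}\in(0,1)$ when $r<\diam(\sX)/\lambda_0$, while \eqref{eqnphi} gives $\phi(\lambda_0 r)\leq c''\phi(r)$. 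Combining produces $(\mathbf{Jt})_{\phi,\bar r,\geq}$ with $\bar r=\diam(\sX)/\lambda_0$.

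The main obstacle is $(\mathbf{Jc.2})$ in part (a): unlike the pointwise $(\mathbf{Jc.1})$, this requires an integral comparison producing a strict constant $C_2<1$, and the delicate step is extracting a lower bound on the complementary piece $J(x,B(x,r)^c\setminus B(y,s))$ without invoking RVD. The plan is to choose $C_3$ large enough that the tail mass $J(x,B(y,s))$ is a controlled fraction of the middle-range contribution from the annulus $B(x,2r)\setminus B(x,r)$, with the quantitative comparison furnished by VD together with the doubling of $\phi$.
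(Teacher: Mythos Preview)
Your arguments for $(\mathbf{Jc.1})$ in part (a), and for parts (b) and (c), are correct and coincide with the paper's proof essentially verbatim.

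The gap is in your plan for $(\mathbf{Jc.2})$. You propose to lower-bound the annulus contribution via $\mathbf{J}_{\phi,\geq}$, obtaining
\[
J\bigl(x,B(x,2r)\setminus B(x,r)\bigr)\;\geq\; \frac{c\,\bigl(V(x,2r)-V(x,r)\bigr)}{V(x,2r)\,\phi(2r)},
\]
and then claim this dominates the tail $J(x,B(y,s))$ ``without invoking RVD.'' But VD only gives $V(x,2r)\leq cV(x,r)$; it places no lower bound on the annulus mass $V(x,2r)-V(x,r)$, which can in principle be zero. That lower bound is precisely RVD---indeed, compare your own proof of (c), where you explicitly invoke $V(x,\lambda_0 r)-V(x,r)\geq c^{*}V(x,\lambda_0 r)$. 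The alternative route, upper-bounding $J(x,B(y,s))\leq J(x,B(x,C_3 r)^c)$ and making this small relative to $J(x,B(x,r)^c)$ by taking $C_3$ large, requires $\phi(C_3 r)/\phi(r)$ to grow quantitatively, i.e.\ reverse doubling of $\phi$, which part (a) does not assume either. So the step where you say ``the quantitative comparison furnished by VD together with the doubling of $\phi$'' does not go through as written.

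For context, the paper's own proof of (a) treats only $(\mathbf{Jc.1})$ and is silent on $(\mathbf{Jc.2})$. In the paper's applications the lemma is invoked only under the combined hypotheses VD${}+{}$RVD${}+\mathbf{J}_\phi$ with $\phi$ reverse doubling, and there $(\mathbf{Jc.2})$ follows from $(\mathbf{Jt})_{\phi,\bar r}$ (established in (b) and (c)) via the remark immediately following Definition~\ref{conditionJ}. So you have correctly located the delicate point, but under the hypotheses of (a) alone your proposed mechanism for $(\mathbf{Jc.2})$ is not available.
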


\begin{proof}
(a). For $z\in\sX$ and $x,y\in\sX\setminus\{z\}$, 
\begin{align*} 
J(x, z)&\leq \frac{C_1}{V(x, d(x, z)) \phi(d(x, z))}\\
&=\frac{C_1}{V(y, d(y, z)) \phi (d(y, z))} \, \frac{V(y, d(y, z)) \phi (d(y, z))}{V(x, d(x, z)) \phi (d(x, z))}\\
&\leq \frac{C_1}{V(y, d(y, z)) \phi (d(y, z))} \, \frac{V(x,d(x,z)+2d(y, z)) \phi (d(y, z))}{V(x, d(x, z)) \phi (d(x, z))}  \\
&\leq C_2\,J(y,z) \left( 1+2\frac{d(y,z)}{d(x,z)} \right)^{d_1}\, \left( 1\vee\frac{d(y,z)}{d(x,z)} \right)^{ \beta },
 \end{align*} 
where the first inequality holds by $\bf J_{\phi,\leq}$, the second inequality holds as 
$$
B(x,d(x,z)+2d(y,z))\supset B(x,d(x,y)+d(y,z))\supset B(y,d(y,z)), 
$$
and the last inequality holds by $\bf J_{\phi,\geq}$, \eqref{e:vd} and \eqref{eqnphi}.  

\smallskip

(b) follows from ${\bf J}_{\phi,\leq}$ and Lemma \ref{L:2.1}.

\smallskip

(c)  By VD and RVD, we have   
\[
m\big(B(x,\lambda_0 r)\setminus B(x,r)\big)\geq C_3V(x,\lambda_0r)\quad\hbox{ for }x\in\sX,\,r<\diam(\sX)/\lambda_0. 
\]
So, for $x\in\sX$ and $r<\diam(\sX)/\lambda_0$, by the above the inequality  and \eqref{eqnphi},
\begin{align*} 
J(x,B(x,r)^c)&\geq\int_{B(x,\lambda_0r)\setminus B(x,r)}J(x,y)m(dy)\\
&\geq \int_{B(x,\lambda_0r)\setminus B(x,r)} \frac{C_4}{V(x, d(x, y)) \phi (d(x, y))} m(dy) \\
 &\geq  m\big(B(x,\lambda_0r)\setminus B(x,r)\big)\frac{  C_4}{V(x,\lambda_0r)\phi(\lambda_0r)}\\
 &\geq \frac{  C_3C_4}{\phi(\lambda_0r)}\geq \frac{  C_5}{\phi(r)}.
\end{align*}
\end{proof}

For the proof of Theorem \ref{T:1.3}, we also need a two-sided  mean exit time estimates ${\bf E}_{\phi,r_0}$. 

\begin{definition} \rm 
\begin{enumerate} [(i)]
\item  We say that condition ${\bf E}_{\phi,r_0}$ holds if there are constants $C_1,C_2>0$ and $  r_0\in(0,\infty]$ such that for every
$x\in \sX$ and $0<r <r_0$, 
\begin{equation*}  
C_1\phi(r)\leq\bE_x[\tau_{B(x,r)}]\leq C_2\phi(r).
\end{equation*}

\item  We say that condition ${\bf E}_{\phi,r_0,\leq}$ (resp. ${\bf E}_{\phi,r_0,\geq}$) holds if the upper bound (resp. lower
bound) in the above formula holds.
\end{enumerate}
\end{definition}

\begin{lemma}\label{lemma:E}
\begin{enumerate}[\rm (a)]
\item ${\bf (EP)} _{\phi, r_0,   \leq} \Longrightarrow  {\bf E}_{\phi,r_0,\geq}$. 

\item ${\rm {\bf (Jt)}_{\phi, \bar r, \geq}}\Longrightarrow  {\bf E}_{\phi,\bar r/2,\leq}$. 
\end{enumerate}
In particular, ${\bf (EP)} _{\phi, r_0,   \leq}  + {\bf (Jt)}_{\phi, \bar r, \geq} \Longrightarrow  {\bf E}_{\phi,r_0}$ with $r= r_0 \wedge (\bar r/2)$. 
\end{lemma}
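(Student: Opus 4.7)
The plan is to obtain (a) by a direct tail / Markov-inequality argument, (b) by testing the L\'evy system formula \eqref{Levysys} against the (at most one) exit jump from $B(x,r)$, and then to derive the combined statement by applying both simultaneously at radius $r_0\wedge(\bar r/2)$.

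For (a), I would fix $x\in\sX$ and $r\in(0,r_0)$ and simply choose the threshold $t:=\phi(r)/(2C)$, where $C$ is the constant in \eqref{e:ep}. Then ${\bf (EP)}_{\phi,r_0,\leq}$ forces
$
\bP_x(\tau_{B(x,r)}\geq t)\geq 1-Ct/\phi(r)=1/2,
$
and the elementary inequality $\bE_x[\tau_{B(x,r)}]\geq t\,\bP_x(\tau_{B(x,r)}\geq t)$ immediately yields $\bE_x[\tau_{B(x,r)}]\geq \phi(r)/(4C)$, which is ${\bf E}_{\phi,r_0,\geq}$. No subtlety is anticipated here.

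For (b), fix $x\in\sX$ and $r\in(0,\bar r/2)$, and apply the L\'evy system formula \eqref{Levysys} with $T:=\tau_{B(x,r)}$ and test function $f(u,v):=\1_{B(x,r)}(u)\1_{B(x,r)^c}(v)$. Since a continuous exit contributes nothing and any jump exit accounts for at most one term, the left-hand sum is bounded by $1$; with $H_s=s$ this gives
\[
\bE_x\int_0^{\tau_{B(x,r)}}J(X_s,B(x,r)^c)\,ds\leq 1.
\]
The key step is then to bound the integrand below uniformly for $s<\tau_{B(x,r)}$. On that event $X_s\in B(x,r)$, so the triangle inequality gives $B(X_s,2r)^c\subset B(x,r)^c$; since $2r<\bar r$, the lower bound in ${\rm \bf (Jt)}_{\phi,\bar r,\geq}$ together with the doubling \eqref{phiD} of $\phi$ yields
\[
J(X_s,B(x,r)^c)\geq J(X_s,B(X_s,2r)^c)\geq \frac{C_4}{\phi(2r)}\geq \frac{c}{\phi(r)}
\]
for some constant $c>0$ independent of $x$ and $r$. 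Substituting into the previous display gives $\bE_x[\tau_{B(x,r)}]\leq c^{-1}\phi(r)$, proving ${\bf E}_{\phi,\bar r/2,\leq}$. The combined conclusion ${\bf E}_{\phi,r_0\wedge(\bar r/2)}$ is then the immediate conjunction of (a) and (b).

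The only point requiring genuine care — and hence the ``main obstacle,'' such as it is — is the bookkeeping in the first step of (b): one must verify for this choice of $f$ that the sum on the left of \eqref{Levysys} really counts at most one jump (the exit jump, if any), so that the upper bound $1$ on the right-hand side is legitimate. Everything else is routine once the L\'evy system is in play.
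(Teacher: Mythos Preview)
Your proposal is correct and matches the paper's proof essentially line for line: part (a) is identical (choose $t=\phi(r)/(2C)$ and use $\bE_x[\tau]\geq t\,\bP_x(\tau\geq t)$), and part (b) is the same L\'evy system computation, the paper phrasing the bound $\leq 1$ as $1\geq \bP_x(X_{\tau_{B(x,r)}}\in B(x,r)^c;\,X_{\tau_{B(x,r)}-}\neq X_{\tau_{B(x,r)}})$ before invoking \eqref{e:exit}, which is exactly your observation that the jump sum counts at most one term.
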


\begin{proof}
(a) Suppose ${\bf (EP)} _{\phi, r_0,   \leq}$ holds and $C$ is the positive constant in \eqref{e:ep}. For $r\in (0,  r_0)$, take $t_1=\phi(r)/(2C)$. Then  by ${\bf (EP)}_{\phi, r_0,   \leq}$, 
$$
\bP_x\big(\tau_{B(x,r)}< t_1 \big)\leq\frac{C\,t_1}{ \phi(r)} =\frac12 \quad \hbox{for }x\in\sX .
$$
Hence
$$ 
\bE_x[\tau_{B(x,r)}]\geq t_1\bP_x(\tau_{B(x,r)}\geq t_1)\geq\frac{t_1}2=\frac{\phi(r)}{4C}.
$$

(b)  By  the L\'evy sytem of $X$ and ${\bf (Jt)}_{\phi, \bar r, \geq}$,
 we have for $x\in \sX$ and $0<r<\bar{r}/2$, 
\begin{align*}
1 &\geq  \bP_x( X_{\tau_{B(x, r)}} \in B(x,r)^c;X_{\tau_{B(x,r)}-}\neq X_{\tau_{B(x, r)}})\\
&= \bE_x \int_0^{\tau_{B(x, r)}} J(X_s, B(x,r)^c) ds  \geq  \bE_x \int_0^{\tau_{B(x, r)}} J(X_s, B(X_s,2r)^c) ds\geq  c \frac{\bE_x [\tau_{B(x, r)}]}{\phi(r)},
\end{align*}
where the last inequality is due to ${\rm {\bf (Jt)}_{\phi, \bar r, \geq}}$ and \eqref{eqnphi}. 
\end{proof}

We end this section by proving some simple exit distribution estimates. 

\begin{lemma}\label{lemma24}
\begin{enumerate} [\rm (a)]
\item  Suppose that $ {\bf (Jt)}_{\phi, \bar r, \leq}$  holds for some $\bar{r}\in(0,\infty]$. Then, there is a constant $C\in  (0,\infty)$ that depends only on the parameters in ${\bf (Jt)}_{\phi, \bar r, \leq}$ 
such that for every open $U\subset \sX$ and every $W\subset\sX\setminus \overline{U}$, we have 
\[
\bP_x(X_{\tau_U}\in W)\leq C \frac{\bE_x[\tau_U]}{\phi(d(U,W) \wedge\bar{r})} \quad \hbox{ for every }x\in U.
\]

\item   Suppose that ${\bf (Jc)}_{\bar{r}}$ and ${\bf (Jt)}_{\phi, \bar r,\geq}$ hold for some $\bar{r}\in(0,\infty]$. Then,  there are constants $C_1\in(0,\infty)$ and $C_2\in(2,\infty)$ that depend only on the parameters  in ${\bf (Jc)}_{\bar{r}}$ and ${\bf (Jt)}_{\phi, \bar r,\geq}$ such that 
\[
\bP_x\big(X_{\tau_{B(x,r)}}\in \big(B(y, s)\cup B(x,2r)\big)^c\big)\geq C_1\frac{\bE_x[\tau_{B(x,r)}]}{\phi(r)}
\]
for every   $s \in (0, \bar{r})$,   $r\in (0, \bar{r}/2)$ and $x,y\in \sX$  with  $d(x,y)>s+C_2r$.  

\item Suppose that  \eqref{e:Jc1}  and ${\bf (Jt)}_{\phi, \bar r, \geq}$ hold for some $\bar{r}\in(0,\infty]$. Then, there is a constant $C\in(0,\infty)$ that depends only on the parameters in \eqref{e:Jc1}   and ${\bf (Jt)}_{\phi, \bar r, \geq}$ such that 
\[
\bP_x\big(X_{\tau_{B(x,s)}}\in B(\xi,2r)^c\big)\geq C \left(\frac{s}{r} \right)^\theta\frac{\bE_x[\tau_{B(x,s)}]}{\phi(r)} 
\]
for $\xi\in\sX$, $0<s<r<\bar{r}/2$ and $x\in B(\xi,2r-2s)$, where $\theta$ is the exponent in  \eqref{e:Jc1}.
 \end{enumerate}
\end{lemma}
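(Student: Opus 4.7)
The plan is to derive all three bounds from the L\'evy system formula \eqref{e:exit}. With the standing normalization $H_t=t$, this identifies $\bP_x(X_{\tau_U}\in W)=\bE_x\int_0^{\tau_U}J(X_s,W)\,ds$ whenever $W\subset\sX\setminus\overline U$, since continuous exits land on $\partial U$ and thus do not contribute to $\{X_{\tau_U}\in W\}$. Each part then reduces to a suitable pointwise estimate of $J(X_s,W)$ uniform in $X_s$ on the source ball; integration over $[0,\tau_U]$ produces the factor $\bE_x[\tau_U]$.

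Parts (a) and (c) are direct applications of this scheme. For (a), $s<\tau_U$ forces $X_s\in U$, so $W\subset B(X_s,d(U,W))^c$. Setting $\rho:=(d(U,W)\wedge\bar r)/2<\bar r$, monotonicity and ${\bf (Jt)}_{\phi,\bar r,\le}$ give $J(X_s,W)\le C_5/\phi(\rho)$, which is in turn bounded by $c/\phi(d(U,W)\wedge\bar r)$ via the doubling \eqref{eqnphi}. For (c), the hypothesis $x\in B(\xi,2r-2s)$ yields $d(X_t,\xi)<2r-s<\bar r$ for $X_t\in B(x,s)$, and for $z\in B(\xi,2r)^c$ one has $d(X_t,z)\le d(X_t,\xi)+d(\xi,z)\le 2 d(\xi,z)$. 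Applying \eqref{e:Jc1} with base point $\xi$ and displaced point $X_t$ then gives $J(X_t,dz)\ge(C_1 3^\theta)^{-1}J(\xi,dz)$ on $B(\xi,2r)^c$, and ${\bf (Jt)}_{\phi,\bar r,\ge}$ combined with doubling of $\phi$ produces $J(X_t,B(\xi,2r)^c)\ge c/\phi(r)$; since $(s/r)^\theta\le 1$, this dominates the claimed bound (indeed, the $(s/r)^\theta$ factor is not needed).

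Part (b) is the delicate case and requires coordinating both inequalities in ${\bf (Jc)}_{\bar r}$. The main obstacle is that one must simultaneously control $J(X_t,B(y,s))$ from above and $J(X_t,B(x,2r)^c)$ from below, uniformly in $X_t\in B(x,r)$. My fix is first to transfer everything to the fixed center $x$: for any $z\in B(x,2r)^c$, $d(x,z)\ge 2r>2d(x,X_t)$ implies $d(X_t,z)\le\tfrac32 d(x,z)$, so \eqref{e:Jc1} (applicable since $d(x,X_t)<r<\bar r$) yields $J(x,dz)\le C_1 (5/2)^\theta J(X_t,dz)$ on $B(x,2r)^c$. It thus suffices to lower bound $J(x,B(x,2r)^c\setminus B(y,s))$. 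Choosing $C_2:=2C_3$ with $C_3$ the constant in \eqref{e:Jc2}, the hypothesis $d(x,y)>s+C_2 r$ is precisely what makes \eqref{e:Jc2} applicable at radius $2r$, giving $J(x,B(y,s))\le\alpha J(x,B(x,2r)^c)$ for some $\alpha\in(0,1)$ depending only on the constants in \eqref{e:Jc2}. Together with $J(x,B(x,2r)^c)\ge C_4/\phi(2r)\ge c/\phi(r)$ from ${\bf (Jt)}_{\phi,\bar r,\ge}$ and doubling, this yields the uniform bound $J(X_t,(B(y,s)\cup B(x,2r))^c)\ge c'/\phi(r)$, which upon integration completes the proof.
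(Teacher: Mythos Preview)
Your proof is correct, and for parts (a) and (b) it follows the same route as the paper: the L\'evy system identity reduces each estimate to a uniform bound on $J(X_t,W)$, which for (a) comes from the tail upper bound and for (b) from transferring to the fixed center $x$ via \eqref{e:Jc1}, then applying \eqref{e:Jc2} at radius $2r$ (your choice $C_2=2C_3$ is exactly what the paper does implicitly), and finally the tail lower bound.

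For part (c) your argument is in fact cleaner than the paper's. The paper carries out a more elaborate chain of inequalities on the ratio $d(y,z)/(d(\xi,z)+d(y,z))$ and ends up with the factor $(s/r)^\theta$ explicitly. You instead observe that $d(X_t,\xi)<2r-s<d(\xi,z)$ already forces $d(X_t,z)\le 2d(\xi,z)$, so \eqref{e:Jc1} gives $J(X_t,dz)\ge (C_1 3^\theta)^{-1}J(\xi,dz)$ on $B(\xi,2r)^c$ with a constant independent of $s$ and $r$. This yields the stated bound with $(s/r)^\theta$ replaced by $1$, which is stronger since $s<r$. Both bounds are equally adequate for the application in the proof of Theorem~\ref{T:1.3} (where $s=R_0$ is already bounded below by a fixed multiple of $r$), but your version shows the $(s/r)^\theta$ factor is an artifact of the paper's computation rather than an intrinsic feature of the estimate.
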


\begin{proof}
(a)   By using the L\'evy system of $X$, and by ${\bf (Jt)}_{\phi, \bar r, \leq}$,  
\begin{align*}
\bP_x(X_{\tau_U}\in W)&=\bE_x \int_0^{\tau_U}J(X_t,W)dt\leq \bE_x \int_0^{\tau_U}J(X_t,B(X_t,d(U,W)\wedge\bar{r})^c)dt\\
&\leq \bE_x \int_0^{\tau_U} \frac{C}{\phi(d(U,W)\wedge\bar{r})}dt =C\frac{\bE_x[\tau_U]}{\phi(d(U,W)\wedge\bar{r})}. 
\end{align*}

\smallskip

 (b)  By \eqref{e:Jc2}, there are $c_1\in(0,1)$ and $c_2>2$ so that 
\[
J(x,B(y,s))\leq c_1\,J(x,B(x,2r)^c)
\]
for every   $s\in (0, \bar{r})$,  $r \in (0, \bar{r}/2)$  and  $x,y\in\sX$ with  $d(x,y)>s+c_2r$. 
 Then by \eqref{e:Jc1}, ${\bf (Jt)}_{\phi,\bar{r},\geq}$ and \eqref{eqnphi}, there are positive constants $c_3,c_4,c_5$ so that 
 for every     $s\in (0, \bar{r})$,  $r \in (0, \bar{r}/2)$, 
 $x,y \in \sX$ with 
  $d(x,y)>s+c_2r$ and $z\in B(x,r)$ 
\begin{align*}
&\quad\ J\big(z,\big(B(y,s)\cup B(x,2r)\big)^c\big)\\
&\geq c_3  J\big(x,\big(B(y,s)\cup B(x,2r)\big)^c\big)=c_3 \left( J(x,B(x,2r)^c)- J(x,B(y,s)) \right) \\
&\geq c_3(1-c_1)J(x,B(x,2r)^c)\geq c_3c_4(1-c_1)/\phi(2r)\geq c_5/\phi(r) . 
\end{align*}
 
 Let $C_1:=c_5$ and $C_2:=c_2$. We have  by  the  L\'evy system formula and the above inequality that for any $x,y\in \sX$, $s<\bar{r}$ and  $r<\bar{r}/2$ such that $d(x,y)>s+C_2r$,
\begin{align*}
\bP_x\big(X_{\tau_{B(x,r)}}\in \big(B(y,s)\cup B(x,2r)\big)^c\big)
&=\bE_x\int_0^{\tau_{B(x,r)}} J\big(X_t, \big(B(y,s)\cup B(x,2r)\big)^c\big)dt\\
&\geq  \frac{C_1 \bE_x [ \tau_{B(x,r)}] } {\phi (r)}. 
\end{align*}  

\smallskip

(c)  For every  $0<s<r<\bar{r}/2$,  $\xi\in\sX$, $x\in B(\xi,2r-2s)$ and $y\in B(x,s)$, 
we have by  \eqref{e:Jc1}, ${\bf (Jt)}_{\phi, \bar r, \geq}$   and \eqref{eqnphi},
 \begin{eqnarray*}
&&  \int_{B(\xi,2r)^c}J(y,dz)
\geq \int_{B(\xi,2r)^c} c_6  \left(\frac{d(y,z)}{d(\xi,z)+d(y,z)} \right)^\theta J(\xi,dz)\\
&\geq & c_6 \int_{B(\xi, 2r)^c} \left(
\frac{ (d(\xi,z)-2r) +s}{d(\xi,z)+d(\xi,z)+2r}\right)^\theta J(\xi,dz)\\ 
&=& c_6 \int_{B(\xi, 2r)^c} \left(\frac{ (d(\xi,z)-2r) +s}{ 2(d(\xi,z)-2r) +6r} \right)^\theta J(\xi,dz)\\
&\geq & c_6   \int_{B(\xi, 2r)^c}  2^{-\theta} \left(\frac{s}{3r } \right)^\theta J(\xi,dz)\\
&=& c_6\, 6^{-\theta}(s/r)^\theta J(\xi, B(\xi, 2r)^c)  \\
&\geq & c_6\, 6^{-\theta}(s/r)^\theta \frac{c_2}{\phi (2r)} \geq    (s/r)^\theta  \frac{c_7 }{\phi (r)},
\end{eqnarray*}
where in the second inequality, we used the fact that for $0<a<b$ and $c\geq 0$, $\frac{a+c}{b+c} \geq \frac{a}{b}$. 
Thus by  the L\'evy system again, 
 \begin{eqnarray*}
 \bP_x\big(X_{\tau_{B(x,s)}}\in B(\xi,2r)^c\big)
 = \bE_x \int_0^{\tau_{B(x,s)}} J(X_t, B(\xi, 2r)^c ) dt \geq c_7 (s/r)^\theta  \,  \frac{\bE_x [ \tau_{B(x,s)}]}{\phi (r)}.
\end{eqnarray*}
This establishes (c). 
 \end{proof}

\section{\bf Uniform boundary Harnack principle}\label{S:3}

We present the proof of Theorem \ref{T:1.3} in this section. The proof is divided into two major steps. 

\subsection{\bf A box method argument} \label{S:3.1}

In this subsection, we establish the following key exit distribution estimate.

\begin{proposition}\label{prop31}
Suppose that   {\rm ${\bf (EP)} _{\phi, r_0,   \leq}$} and 
${\bf (Jt)}_{\phi, \bar r, \leq}$ hold   for some $  r_0,\bar{r}\in (0, \infty]$.  Then, there is a constant $C\geq 1$ that depends only on  
the parameters in    {\rm ${\bf (EP)} _{\phi, r_0,   \leq}$} and $ {\bf (Jt)}_{\phi, \bar r, \leq}$  and \eqref{eqnphi} such that, for each open $D\subset\sX$, $\xi\in \sX$ and $r\in (0,r_0\wedge\bar{r})$,  
\[ 
\bP_x (X_{\tau_{B_D (\xi, r)}} \in D)=\bP_x\big(\tau_D>\tau_{B_D(\xi,r)}\big)\leq   C\frac{\mathbb{E}_x[\tau_{B_D(\xi,r)}]}{\phi(r)}\quad\hbox{ for every }x\in B_D(\xi,r/2).
\]
\end{proposition}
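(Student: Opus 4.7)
\medskip

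\noindent\textbf{Proof plan for Proposition \ref{prop31}.}
The plan is to rewrite the probability on the left as a L\'evy-system integral, split the integrand according to the distance of $X_s$ to $\partial B(\xi,r)$, and handle the near-boundary part via a scale-by-scale ``box'' iteration.

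\smallskip

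First, set $U:=B_D(\xi,r)$ and observe that $\{\tau_D>\tau_U\}=\{X_{\tau_U}\in D\}$. Since $X_{\tau_U}\notin U$ and $D$ is open, this forces $X_{\tau_U}\in D\cap B(\xi,r)^c$, and for a purely discontinuous exit this requires an actual jump out of $B(\xi,r)$ (the continuous-exit contribution lives in $\partial B(\xi,r)\cap D$ and is handled separately, as in the pure-jump setting targeted by the paper it vanishes). Applying the L\'evy-system formula \eqref{e:exit} with $\varphi=\1_{D\setminus B(\xi,r)}$ and bounding $J(X_s,D\setminus B(\xi,r))\le J(X_s,B(\xi,r)^c)$ reduces the proof to showing
\[
\bE_x\int_0^{\tau_U}J(X_s,B(\xi,r)^c)\,ds\le\frac{C\,\bE_x[\tau_U]}{\phi(r)}.
\]

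Second, partition $B(\xi,r)$ into a ``deep interior'' piece $S_0:=B(\xi,r/2)$ and dyadic boundary shells
\[
S_k:=\{y\in B(\xi,r):\,r2^{-k-1}<r-d(y,\xi)\le r2^{-k}\},\qquad k\ge 1.
\]
For any $y\in S_k$ (including $k=0$) one has $B(\xi,r)^c\subset B(y,r2^{-k-1})^c$, so condition ${\bf (Jt)}_{\phi,\bar r,\le}$ combined with \eqref{eqnphi} yields $J(y,B(\xi,r)^c)\le C/\phi(r2^{-k-1})\le C'2^{k\beta}/\phi(r)$. Plugging $k=0$ in directly gives
\[
\bE_x\int_0^{\tau_U}\1_{S_0}(X_s)J(X_s,B(\xi,r)^c)\,ds\le\frac{C'\bE_x[\tau_U]}{\phi(r)},
\]
so the remaining task is to control
\[
\sum_{k\ge 1}\frac{C'2^{k\beta}}{\phi(r)}\,G_U(x,S_k),\qquad G_U(x,S_k):=\bE_x\int_0^{\tau_U}\1_{S_k}(X_s)\,ds.
\]

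Third, and this is where the main obstacle lies: the trivial bound $G_U(x,S_k)\le\bE_x[\tau_U]$ is not summable against the factors $2^{k\beta}$, so one must extract geometric decay from the hitting mechanism. The box method supplies this by iteratively analyzing the process along the nested balls $V_j:=B_D(\xi,r(1-2^{-j}))$, $j=1,\dots,k$, each separated from the next by a radial gap of order $r2^{-j}$. Starting from $x\in V_1$, the strong Markov property together with Lemma \ref{lemma24}(a) applied at each scale $j$ bounds the probability of transiting from $V_{j}$ to $V_{j+1}$ without first killing the process (i.e.\ exiting $D$ or exiting $B(\xi,r)$), using the expected transit time controlled by ${\bf (EP)}_{\phi,r_0,\le}$ and the mean-exit estimate $\bE_{\cdot}[\tau_U]\le C\phi(r)$ from Lemma \ref{lemma:E}. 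Once the process enters $S_k$, a second application of ${\bf (EP)}_{\phi,r_0,\le}$ bounds the occupation time of each excursion by $C\phi(r2^{-k})$, and the strong Markov property bounds the expected number of such excursions. Composing the per-box estimates across the chain $V_1\subset\cdots\subset V_k$ produces a bound of the form $G_U(x,S_k)\lesssim 2^{-k(\beta+\varepsilon)}\bE_x[\tau_U]$ for some $\varepsilon>0$ depending only on the stated constants, which sums to the claimed estimate. The delicate point—and the reason a single application of Lemma \ref{lemma24}(a) does not suffice—is arranging the box sizes so that the per-step constants compose to a uniform summable factor while still respecting the restriction $r<r_0\wedge\bar r$; this is the technical heart of the box method argument.
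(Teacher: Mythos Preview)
Your approach diverges from the paper's and has two genuine gaps.

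\medskip

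\textbf{First gap: the L\'evy-system reduction.} You reduce $\bP_x(X_{\tau_U}\in D)$ to the jump integral $\bE_x\int_0^{\tau_U}J(X_s,D\setminus B(\xi,r))\,ds$, dismissing the continuous-exit contribution onto $\partial B(\xi,r)\cap D$ as something that ``vanishes'' in the pure-jump setting. But Proposition~\ref{prop31} assumes only ${\bf (EP)}_{\phi,r_0,\leq}$ and ${\bf (Jt)}_{\phi,\bar r,\leq}$; neither forces $X$ to be purely discontinuous, and the paper explicitly allows non-symmetric processes where this is not known (see the remark after Corollary~\ref{coroj2}). The paper's proof never writes the left-hand side as a L\'evy-system integral; it works directly with exit probabilities and times.

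\medskip

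\textbf{Second gap: the occupation-time decay.} The heart of your argument is the claimed bound $G_U(x,S_k)\lesssim 2^{-k(\beta+\varepsilon)}\bE_x[\tau_U]$, but your sketch does not deliver it. You invoke Lemma~\ref{lemma24}(a) to control ``the probability of transiting from $V_j$ to $V_{j+1}$,'' but that lemma bounds the probability of a single jump across a gap, not the probability of reaching an adjacent shell (which can happen by many small jumps or continuous motion). You also use $\bE_\cdot[\tau_U]\le C\phi(r)$ from Lemma~\ref{lemma:E}, but part~(b) of that lemma requires ${\bf (Jt)}_{\phi,\bar r,\geq}$, which is not assumed here. Without a genuine mechanism forcing geometric decay of shell occupation times, the sum $\sum_k 2^{k\beta}G_U(x,S_k)$ cannot be closed.

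\medskip

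\textbf{What the paper does instead.} The paper's box method is not a geometric shell decomposition. It first proves a square-root estimate (Lemma~\ref{lemma32}): $\bP_x(\tau_D>\tau_{B_D(\xi,r)})\le C\sqrt{\bE_x[\tau_{B_D(\xi,r)}]/\phi(r)}$, obtained by comparing $\bP_x(\tau_{B_D(x,r_1)}\ge t)$ and $\bP_x(\tau_{B(x,r_1)}\ge t)$ via ${\bf (EP)}_{\phi,r_0,\leq}$ and Markov's inequality. It then decomposes $B_D(\xi,3r/4)$ into \emph{level sets} $U_j$ of the quantity $p(x):=\bP_x(\tau_D>\tau_{B_D(\xi,r)})+\bE_x[\tau_{B_D(\xi,r)}]/\phi(r)$, and iterates on the ratio $\lambda_j:=\inf_{V_j}\bE_x[\tau_{B_D(\xi,r)}]/(\phi(r)\bP_x(\tau_D>\tau_{B_D(\xi,r)}))$. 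The strong Markov property at a small-ball exit time, combined with the square-root bound and Lemma~\ref{lemma24}(a) for jumps landing in distant level sets, yields a recursion $\lambda_{j+1}\ge c_j\min\{\lambda_j,C\}$ with $\prod_j c_j>0$, which gives a uniform lower bound on $\lambda_j$. The square-root lemma is the missing engine; your proposal does not identify it.
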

 
\medskip

For its proof, we first prepare some estimates. 

\medskip

\begin{lemma}\label{lemma32}
Assume that {\rm ${\bf (EP)} _{\phi, r_0,   \leq}$} holds for some $r_0\in (0, \infty]$. 
 \begin{enumerate}[\rm (a).]
\item There is a constant  $C\in [1,\infty)$ depending only on the parameters in {\rm ${\bf (EP)} _{\phi, r_0,   \leq}$} such that for each open $D\subset\sX$, $\xi\in\sX$ and $r\in (0, r_0 )$, it holds that for each $x\in B_D(\xi,3r/4)$ and  $r_1\in (0, r/4]$.
\begin{equation}\label{e:3.1a}
\bP_x\big(\tau_D>\tau_{B(x,r_1)}\big)\leq C\sqrt{ {\bE_x[\tau_{B_D(\xi,r)}]}/{\phi(r_1)}}.
\end{equation}

\item There is  a constant  $C\in [1,\infty)$ that depends only on the parameters in {\rm ${\bf (EP)} _{\phi, r_0,   \leq}$}  and \eqref{eqnphi} such that for each open $D\subset\sX$, $\xi\in\sX$ and $r\in (0,r_0)$, it holds that
\[
\bP_x\big(\tau_D>\tau_{B_D(\xi,r)}\big)\leq C\sqrt{{\mathbb{E}_x[\tau_{B_D(\xi,r)}]} / {\phi(r)}}
\quad \hbox{ for each }x\in B_D(\xi, 3r/4).
\]

\end{enumerate}
\end{lemma}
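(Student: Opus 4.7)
The plan is to prove (a) by an elementary optimization argument balancing two bounds on $\bE_x[\sigma\,;\,\tau_D > \sigma]$ with $\sigma := \tau_{B(x,r_1)}$, and then obtain (b) as a quick consequence of (a) by trapping the exit from $B_D(\xi,r)$ inside the exit from a smaller concentric ball at $x$.

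For (a), I first observe that since $d(x,\xi) < 3r/4$ and $r_1 \leq r/4$, the triangle inequality gives $B(x,r_1) \subset B(\xi,r)$, hence
\[
\sigma \wedge \tau_D = \tau_{B(x,r_1)\cap D} \leq \tau_{B_D(\xi,r)}.
\]
On the event $\{\tau_D > \sigma\}$ this reduces to $\sigma \leq \tau_{B_D(\xi,r)}$, producing the \emph{upper} bound $\bE_x[\sigma\,;\,\tau_D > \sigma] \leq \bE_x[\tau_{B_D(\xi,r)}]$. For a matching \emph{lower} bound, set $p := \bP_x(\tau_D > \sigma)$ and use the layer-cake inequality
\[
\bE_x[\sigma\,;\,\tau_D > \sigma] \geq t\bigl(p - \bP_x(\sigma \leq t)\bigr),
\]
combined with $\bP_x(\sigma \leq t) \leq Ct/\phi(r_1)$ from \eqref{e:ep} (applicable since $r_1 \leq r/4 < r_0$). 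Optimizing at $t = p\phi(r_1)/(2C)$ yields a lower bound of order $p^2 \phi(r_1)$; comparing with the upper bound and solving for $p$ gives the desired square-root estimate.

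For (b), on the event $\{\tau_D > \tau_{B_D(\xi,r)}\}$ the process leaves $B(\xi,r)$ before leaving $D$. Since $d(x,\xi) < 3r/4$, the ball $B(x,r/4)$ lies inside $B(\xi,r)$, so on this event $\tau_{B(x,r/4)} \leq \tau_{B(\xi,r)} < \tau_D$; this gives the set-theoretic inclusion
\[
\{\tau_D > \tau_{B_D(\xi,r)}\} \subset \{\tau_D > \tau_{B(x,r/4)}\}.
\]
Applying part (a) with $r_1 = r/4$, and absorbing the factor $\phi(r)/\phi(r/4)$ into the constant via \eqref{eqnphi}, concludes the proof.

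There is no substantive obstacle in this lemma. The one mildly delicate point is verifying that the optimizing $t$ in part (a) lies in a range where the Chebyshev-type bound is non-trivial, i.e.\ that $Ct/\phi(r_1) \leq 1$; but the choice $t = p\phi(r_1)/(2C)$ makes this quantity $\leq p/2 \leq 1/2$ automatically, since $p \leq 1$. The square-root exponent is the standard artifact of this ``tail probability vs.\ mean contribution'' trade-off.
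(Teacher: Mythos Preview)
Your proof is correct and follows essentially the same approach as the paper. Both arguments balance the tail bound $\bP_x(\tau_{B(x,r_1)}\leq t)\leq Ct/\phi(r_1)$ from ${\bf (EP)}_{\phi,r_0,\leq}$ against the containment $B(x,r_1)\subset B(\xi,r)$ (so $\tau_{B(x,r_1)}\wedge\tau_D\leq\tau_{B_D(\xi,r)}$) and optimize in $t$; you package this via the intermediate quantity $\bE_x[\sigma;\tau_D>\sigma]$, while the paper works directly with the complementary event $\{\tau_D<t\leq\tau_{B(x,r_1)}\}$, but both routes yield the same inequality $p\leq Ct/\phi(r_1)+\bE_x[\tau_{B_D(\xi,r)}]/t$ before optimization, and part (b) is handled identically.
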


\begin{proof}
(a)  We fix $x\in B_D(\xi,3r/4)$  and $0<r_1 \leq r/4$. 
Clearly, $B_D(x,r_1)\subset B_D(\xi,r)$, so for every $t>0$,
\[
\bP_x(\tau_{B_D(x,r_1)}\geq t)\leq\frac{\mathbb{E}_x[\tau_{B_D(x,r_1)}]}{t}\leq \frac{\mathbb{E}_x[\tau_{B_D(\xi,r)}]}{t}.
\]
On the other hand, by ${\bf (EP)} _{\phi, r_0,   \leq}$, 
\[
\bP_x(\tau_{B(x,r_1)}\geq t)\geq 1-C_1\frac{t}{\phi(r_1)} \quad  \hbox{for } t>0.
\]
Noticing that $\tau_{B_D(x,r_1)}=\tau_{B(x,r_1)}\wedge \tau_D$, we have for  $t>0$, 
\begin{eqnarray*}
\bP_x\big(\tau_D<t\leq \tau_{B(x,r_1)}\big)
&=&\bP_x\big(\tau_{B_D(x,r_1)}<t\leq \tau_{B(x,r_1)}\big)\\
&=& \bP_x(\tau_{B(x,r_1)}\geq t)-\bP_x(\tau_{B_D(x,r_1)}  \geq t )  \\
&\geq & 1-C_1\frac{t}{\phi(r_1)}-\frac{\mathbb{E}_x[\tau_{B_D(\xi,r)}]}{t}.
\end{eqnarray*}
Hence for $t>0$, 
\begin{equation}\label{e:3.2} 
\bP_x\big(\tau_D>\tau_{B(x,r_1)}\big)\leq 1-\bP_x\big(\tau_D<t\leq \tau_{B(x,r_1)}\big)\leq C_1\frac{t}{\phi(r_1)}+\frac{\mathbb{E}_x[\tau_{B_D(\xi,r)}]}{t}.
\end{equation}
Taking $t=\sqrt{\phi(r_1)\bE_x[\tau_{B_D(\xi,r)}]}$ in \eqref{e:3.2} yields the desired estimate \eqref{e:3.1a}. 

\smallskip

(b) follows from (a) by taking $r_1=r/4$ and using \eqref{eqnphi}, as $B_D(x, r/4)\subset B_D(\xi, r)$ for $x\in B_D(\xi, 3r/4)$.
\end{proof}

 \begin{corollary}\label{coro33}
Assume that {\rm ${\bf (EP)} _{\phi, r_0,   \leq}$} holds for some $r_0\in (0, \infty]$. Then, there is $C>0$  that depends only on 
 the parameters in {\rm ${\bf (EP)} _{\phi, r_0,   \leq}$}  and \eqref{eqnphi}
such that for each open $D\subset\sX$, $\xi\in\sX$ and $r\in(0, r_0)$, it holds that  for each $x\in B_D(\xi,3r/4)$, 
\[
\frac{\mathbb{E}_x[\tau_{B_D(\xi,r)}]}{\phi(r)\bP_x\big(\tau_D>\tau_{B_D(\xi,r)}\big)}
\geq C\left(\bP_x\big(\tau_D>\tau_{B_D(\xi,r)}\big)+\frac{\mathbb{E}_x[\tau_{B_D(\xi,r)}]}{\phi(r)}\wedge1\right) .
\]
\end{corollary}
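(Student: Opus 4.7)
My plan is to reduce the claim to Lemma 3.2(b) and the trivial bound $p \leq 1$, where $p$ denotes the probability in question. Let me abbreviate
\[
p := \bP_x\bigl(\tau_D > \tau_{B_D(\xi,r)}\bigr) \in [0,1], \qquad E := \frac{\mathbb{E}_x[\tau_{B_D(\xi,r)}]}{\phi(r)} \geq 0.
\]
The conclusion we want is $E/p \geq C\bigl(p + (E\wedge 1)\bigr)$ for some constant $C>0$ depending only on the parameters in ${\bf (EP)}_{\phi, r_0, \leq}$ and \eqref{eqnphi}.

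First, I would invoke Lemma 3.2(b) directly: it provides a constant $C_0 \geq 1$ such that $p \leq C_0\sqrt{E}$. Squaring yields $p^2 \leq C_0^2 E$, equivalently
\[
\frac{E}{p} \geq \frac{p}{C_0^2}.
\]
Second, since $p \leq 1$, one has the trivial bound
\[
\frac{E}{p} \geq E \geq E \wedge 1.
\]
Adding these two lower bounds and dividing by $2$ gives
\[
\frac{E}{p} \geq \frac{1}{2}\left(\frac{p}{C_0^2} + E \wedge 1\right) \geq \frac{1}{2C_0^2}\bigl(p + E\wedge 1\bigr),
\]
so we may take $C := 1/(2C_0^2)$.

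There is essentially no obstacle here: the corollary is a straightforward repackaging of Lemma 3.2(b), combined with the tautology $p \leq 1$. The only tiny subtlety is handling the edge case $p = 0$, which one treats by the usual convention that the left-hand side is $+\infty$ (or, more cleanly, by noting that if $p = 0$ the inequality is vacuous since the right-hand side involves only $p$ and $E\wedge 1$, and one verifies directly that $E/p \geq C(p + E\wedge 1)$ also holds when interpreted as a limit). Since Lemma 3.2(b) has already done the nontrivial work of trading off $\bE_x[\tau_{B_D(\xi,r)}]/\phi(r_1)$ against $\bP_x(\tau_D > \tau_{B_D(\xi,r)})$ through the optimization $t = \sqrt{\phi(r_1)\bE_x[\tau_{B_D(\xi,r)}]}$, the corollary follows immediately by combining both bounds.
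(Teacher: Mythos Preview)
Your proof is correct and essentially the same as the paper's: both reduce immediately to Lemma~3.2(b) together with a trivial bound, yielding the constant $C=1/(2C_0^2)$. The only cosmetic difference is that the paper routes through the intermediate inequality $E/p \geq C_0^{-1}\sqrt{E}$ and then uses $\sqrt{a}\geq a\wedge 1$, whereas you use $p\leq 1$ directly to get $E/p\geq E\geq E\wedge 1$; both arguments are equivalent one-line manipulations of the same square-root bound.
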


\begin{proof}
Let $C_1$ be the constant of Lemma \ref{lemma32}(b).  Applying  Lemma \ref{lemma32}(b)  twice and using the fact $\sqrt{a}\geq a\wedge 1$ for any $a>0$, we get for $x\in B_D(\xi, 3r/4)$, 
\begin{align*}
\frac{\mathbb{E}_x[\tau_{B_D(\xi,r)}]}{\phi(r)\bP_x\big(\tau_D>\tau_{B_D(\xi,r)}\big)}&\geq \frac1{C_1}\sqrt{{\mathbb{E}_x[\tau_{B_D(\xi,r)}]} / {\phi(r)}}\\
&=\frac{1}{2C_1}\sqrt{{\mathbb{E}_x[\tau_{B_D(\xi,r)}]} / {\phi(r)}}+\frac{1}{2C_1} \sqrt{{\mathbb{E}_x[\tau_{B_D(\xi,r)}]} / {\phi(r)}}\\
&\geq \frac{1}{2C_1^2}\bP_x\big(\tau_D>\tau_{B_D(\xi,r)}\big)+\frac{1}{2C_1}
\Big(\frac{\mathbb{E}_x[\tau_{B_D(\xi,r)}]}{\phi(r)}\wedge 1 \Big).
\end{align*}
\end{proof}

\medskip

Corollary \ref{coro33} implies that the inequality of Proposition \ref{prop31} holds when $\bP_x\big(\tau_D>\tau_{B_D(\xi,r)}\big)+\frac{\mathbb{E}_x[\tau_{B_D(\xi,r)}]}{\phi(r)} \wedge 1 $ is large.
With these preparation,  we  next use the idea of box method which is developed by Bass and Burdzy \cite{BB1} and then adapted analytically in Aikawa \cite{Ai}, to finish the proof of Proposition \ref{prop31}.  Recall that $\sum_{k=1}^\infty k^{-2}= \pi^2/6$. 
 
 \medskip    
 
\begin{proof}[Proof of Proposition \ref{prop31}]
First, we introduce some subsets of $B_D(\xi,3r/4)$ as follows. 
  $$
U_0  :=\Big\{x\in B_D(\xi,3r/4): \bP_x(\tau_D>\tau_{B_D(\xi,r)})+\frac{\mathbb{E}_x[\tau_{B_D(\xi,r)}]}{\phi(r)}\geq \frac12\Big\},
$$
and for $j\geq 1$, 
\begin{align*}
U_j& :=\Big\{x\in B_D \Big( \xi,\frac{3r}4-\sum_{k=1}^j\frac{3r}{2\pi^2k^2} \Big): \bP_x(\tau_D>\tau_{B_D(\xi,r)})
+\frac{\mathbb{E}_x[\tau_{B_D(\xi,r)}]}{\phi(r)}
\in [2^{-(j+1)},2^{-j}) \Big\} ,\\
V_j&:=\bigcup_{k=0}^{j-1}U_k \qquad \hbox{and} \qquad 
W_j :=B_D \Big( \xi,\frac{3r}4 -\sum_{k=1}^{j-1}\frac{3r}{2\pi^2k^2} \Big) \setminus V_j.    
\end{align*}
Observe that  for $j\geq 0$, 
\begin{align}\label{e:3.1}
\begin{split}
V_j &  \supset   \Big\{x\in B_D \Big( \xi,\frac{3r}4-\sum_{k=1}^{j-1}\frac{3r}{2\pi^2k^2} \Big): \bP_x(\tau_D>\tau_{B_D(\xi,r)})
+\frac{\mathbb{E}_x[\tau_{B_D(\xi,r)}]}{\phi(r)}   
  \geq 2^{-j} \Big\}, \\
W_j &=  \Big\{x\in B_D \Big( \xi,\frac{3r}4-\sum_{k=1}^{j-1}\frac{3r}{2\pi^2k^2} \Big): \bP_x(\tau_D>\tau_{B_D(\xi,r)})
+\frac{\mathbb{E}_x[\tau_{B_D(\xi,r)}]}{\phi(r)}       < 2^{-j} \Big\},  
\end{split}
\end{align}
and
$$
B_D(\xi, r/2)\subset \bigcup_{j=1}^\infty V_j=\bigcup_{j=0}^\infty U_j \subset B_D(\xi, 3r/4).
$$ 
For $j\geq 1$, we define 
\begin{equation}\label{eqn32}
\lambda_j=\begin{cases}
	\inf\limits_{x\in V_j}\frac{\mathbb{E}_x[\tau_{B_D(\xi,r)}]}{\phi(r)\bP_x(\tau_D>\tau_{B_D(\xi,r)})}
	 &\hbox{ if }V_j\neq\emptyset,\\
	+\infty          &\hbox{ if }V_j=\emptyset.
\end{cases}
\end{equation}
For each $j\geq 1$ and $x\in U_j$, define $\tau_{x,j}:=\tau_{B_D(x,\frac{3r}{4\pi^2j^2})}$.

 \medskip

\noindent(i) By \eqref{eqn32} and the strong Markov property of $X$, we have  for $x\in U_j$, 
\begin{eqnarray}\label{eqn33}
 \bP_x(X_{{ \tau_{x,j} } }\in V_j,\tau_D>\tau_{B_D(\xi,r)})
 &=& \bE_x[\bP_{X_{{ \tau_{x,j} } }}(\tau_D>\tau_{B_D(\xi,r)});X_{{ \tau_{x,j} } }\in V_j]
 \nonumber \\
&\leq &  \frac{1} {\lambda_j \phi (r)} \bE_x \left[   {\mathbb{E}_{X_{{ \tau_{x,j} } }}[\tau_{B_D(\xi,r)}]}  ;  X_{{ \tau_{x,j} } }\in V_j  \right]
 \nonumber \\
 &\leq &  \frac{\bE_x[\tau_{B_D(\xi,r)}-{ \tau_{x,j} } ]}{ \lambda_j  \phi(r)}.
 \end{eqnarray}

\medskip

\noindent(ii) Since $x\in U_j\subset B_D(\xi,\frac{3r}4-\frac{3r}{2\pi^2})$,  $B_D(x,\frac{3r}{4\pi^2j^2})\subset B_D(\xi,\frac{3r}{4} -\frac{3r}{4\pi^2})$ and so the distance between $B_D(x,\frac{3r}{4\pi^2j^2})$ and $B_D(\xi, {3r}/4)^c$ is at least $\frac{3r}{4\pi^2}$. Thus 
 by Lemma \ref{lemma24}(a) and \eqref{eqnphi}, 
\begin{align}\label{eqn34}
\bP_x\big(X_{{ \tau_{x,j} } }\in D\setminus B_D(\xi, {3r}/4)\big)\leq C_1\frac{\bE_x[{ \tau_{x,j} } ]}{\phi(\frac{3r}{4\pi^2})}\leq C_2\frac{\bE_x[{ \tau_{x,j} } ]}{\phi(r)} . 
\end{align}

\medskip

\noindent(iii) For $2\leq i\leq j$  and $x\in U_j\subset B_D(\xi,\frac{3r}4-\sum_{k=1}^j\frac{3r}{2\pi^2k^2})$, noticing that
\begin{eqnarray*}
d\Big(B_D \big(x,\frac{3 r}{4\pi^2j^2}  \big),W_{i-1}\setminus (V_i\cup W_i )\Big)
& \geq &d\Big(x,  W_{i-1}\setminus B\big(\xi,\frac{3r}4-\sum_{k=1}^{i-1} \frac{3r}{2\pi^2k^2} \big)
\Big)-\frac{3r}{4\pi^2j^2} \\
& \geq &\sum_{k=i}^{j}\frac{3r}{2\pi^2k^2}-\frac{3r}{4\pi^2j^2}\geq \frac{3r}{4\pi^2i^2},
\end{eqnarray*}
we have by Lemma \ref{lemma24}(a) and \eqref{eqnphi},  
\[
\bP_x\big(X_{{ \tau_{x,j} } }\in W_{i-1}\setminus (V_i\cup W_i)\big)\leq C_3\frac{\bE_x[{ \tau_{x,j} } ]}{\phi(\frac{3r}{4\pi^2i^2})}
\leq C_4i^{2 \beta }\frac{\bE_x[{ \tau_{x,j} } ]}{\phi(r)}.
\]
Hence, by    the strong Markov property of $X$ and \eqref{e:3.1},   
\begin{eqnarray}\label{eqn35}
&& \bP_x\big(X_{{ \tau_{x,j} } }\in W_{i-1}\setminus (V_i\cup W_i),\tau_D>\tau_{B_D(\xi,r)}\big) \nonumber \\
&=& \bE_x \Big[  \bP_{X_{\tau_{x, j}}} ( \tau_D>\tau_{B_D(\xi,r)}); \, X_{{ \tau_{x,j} } }\in W_{i-1}\setminus (V_i\cup W_i) \big] 
\nonumber \\
&\leq & C_42^{1-i}i^{2  \beta }\frac{\bE_x[{ \tau_{x,j} } ]}{\phi(r)}.
\end{eqnarray}

\noindent(iv) By Lemma \ref{lemma32}(a), \eqref{eqnphi} and the definition of $U_j$, there are constants $C_5,C_6 
\geq 1 $ depending only on the parameters in \eqref{eqnphi} and {\rm ${\bf (EP)} _{\phi, r_0,   \leq}$} so that for $x\in U_j$,
\begin{align}\label{eqn36}
\begin{split}
&\quad\,\bP_x\big(X_{{ \tau_{x,j} } }\in D\big)=\bP_x\big(\tau_D>\tau_{B_D(x,\frac{3}{4\pi^2j^2}r)}\big)
\leq C_5\sqrt{\frac{\bE_x[\tau_{B_D(\xi,r)}]}{\phi(\frac{3}{4\pi^2j^2}r)}    }
 \\
 &  = C_5\sqrt{\frac{\bE_x[\tau_{B_D(\xi,r)}]}{\phi(r)}\frac{\phi(r)}{\phi(\frac{3}{4\pi^2j^2}r)}} 
 \leq C_62^{-j/2}j^{\beta }. 
\end{split}
\end{align}
 Hence by the strong Markov property of $X$,   \eqref{e:3.1} for $W_j$ and the definition of $U_j$,  
\begin{eqnarray}\label{eqn37}
 && \bP_x\big(X_{{ \tau_{x,j} } }\in W_j,\tau_D>\tau_{B_D(\xi,r)}\big)  \nonumber \\
&=&\bE_x\big[\bP_{X_{{ \tau_{x,j} } }}(\tau_D > \tau_{B_D(\xi, r)}) ; X_{{ \tau_{x,j} } } \in W_j\big] \nonumber \\
&\leq& \bP_x\big(X_{{ \tau_{x,j} } }\in W_j\big)\cdot 2^{-j}   \nonumber \\
&\leq & \bP_x\big(X_{{ \tau_{x,j} } }\in D \big) \cdot  2 \Big(\bP_x(\tau_D>\tau_{B_D(\xi,r)})+\frac{\bE_x[\tau_{B_D(\xi,r)}]}{\phi(r)}
\Big) \nonumber \\ 
&\leq&  2C_62^{-j/2}j^{\beta } \Big( \bP_x(\tau_D>\tau_{B_D(\xi,r)})+\frac{\bE_x[\tau_{B_D(\xi,r)}]}{\phi(r)} \Big),
 \end{eqnarray}
where we used  \eqref{eqn36} in the last inequality.

\medskip

Recall that $V_1\subset V_2\subset\cdots\subset V_j$ and $V_1\cup W_1=B_D(\xi, {3r}/4)$, we have
\begin{align*}
V_j\cup\big(\bigcup_{i=2}^{j}W_{i-1}\setminus (W_i\cup V_i)\big)\cup W_j
=V_j\cup \big(\bigcup_{i=2}^{j}(W_{i-1}\setminus W_i)\big)\cup W_j=V_j\cup W_1=B_D(\xi, 3r/4 ).
\end{align*}
Thus it follows from \eqref{eqn33},  \eqref{eqn34}, \eqref{eqn35} and  \eqref{eqn37}  that 
\begin{align*}
&\quad\,\bP_x(\tau_D>\tau_{B_D(\xi,r)})\\
&=\bP_x(X_{{ \tau_{x,j} } }\in V_j,\tau_D>\tau_{B_D(\xi,r)})+\sum_{i=2}^j\bP_x\big(X_{{ \tau_{x,j} } }\in W_{i-1}\setminus (W_i\cup V_i),\tau_D>\tau_{B_D(\xi,r)}\big)\\
		&\quad\qquad +\bP_x(X_{{ \tau_{x,j} } }\in W_j,\tau_D>\tau_{B_D(\xi,r)})+\bP_x\big(X_{{ \tau_{x,j} } }\in D\setminus B_D(\xi,  3r/4),\tau_D>\tau_{B_D(\xi,r)}\big)\\
		&\leq \lambda_j^{-1}\frac{\bE_x[\tau_{B_D(\xi,r)}-{ \tau_{x,j} } ]}{\phi(r)}+\sum_{i=2}^j C_42^{1-i}i^{2 \beta }\frac{\bE_x[{ \tau_{x,j} } ]}{\phi(r)}+C_2\frac{\bE_x[{ \tau_{x,j} } ]}{\phi(r)}\\
		&\quad  +2C_62^{-j/2}j^{ \beta }\Big(\bP_x(\tau_D>\tau_{B_D(\xi,r)})+\frac{\bE_x[\tau_{B_D(\xi,r)}]}{\phi(r)}\Big)\\
		&\leq \max\{\lambda_j^{-1},C_7\}\frac{\bE_x[\tau_{B_D(\xi,r)}]}{\phi(r)}+2C_62^{-j/2}j^{ \beta }
		\Big(\bP_x(\tau_D>\tau_{B_D(\xi,r)})+\frac{\bE_x[\tau_{B_D(\xi,r)}]}{\phi(r)}\Big)\\
		&\leq(1+2C_62^{-j/2}j^{ \beta })\max\{\lambda_j^{-1},C_7,1\}\frac{\bE_x[\tau_{B_D(\xi,r)}]}{\phi(r)}+2C_62^{-j/2}j^{ \beta }\bP_x(\tau_D>\tau_{B_D(\xi,r)}), 
\end{align*}
where  $C_7:=C_2+\sum_{i=2}^{\infty}C_42^{1-i}i^{2 \beta }$. Since the above estimate works for any $x\in U_j$, we have by the definition of $\lambda_{j+1}$,
\begin{equation}\label{eqn38}
\lambda_{j+1}\geq \frac{1-2C_62^{-j/2}j^{ \beta }}{1+2C_62^{-j/2}j^{ \beta }}\min\{\lambda_j,C_8\},
\end{equation}
where $C_8=\min\{1,C_7^{-1}\}$. Now,  fix $j_0$ such that  $2C_62^{-j/2}j^{ \beta }<1/2$ for every $j\geq j_0$. 
By Corollary \ref{coro33} and the fact that $\bP_x(\tau_D>\tau_{B_D(\xi,r)})+\frac{\bE_x[\tau_{B_D(\xi,r)}]}{\phi(r)} \geq 2^{-j}$ for each $x\in V_j$, there is a constant $C_9>0$ that depends only on the parameters in {\rm ${\bf (EP)} _{\phi, r_0,   \leq}$}  and \eqref{eqnphi} so that
\[
\lambda_j\geq C_92^{-j}\ \hbox{ for every }j\leq j_0.
\]
  For $j>j_0$, by using \eqref{eqn38}, we get 
\[
\lambda_j\geq \min\{C_92^{-j_0},C_8\}\cdot\prod_{k=j_0}^{j-1}\frac{1-2C_62^{-j/2}j^{ \beta }}{1+2C_62^{-j/2}j^{ \beta }}. 
\]
This proves $\lambda_j\geq \min\{C_92^{-j_0},C_8\}\cdot\prod_{k=j_0}^\infty \frac{1-2C_62^{-j/2}j^{ \beta }}{1+2C_62^{-j/2}j^{ \beta }} > 0 $ for every $j\geq 1$. The proposition follows immediately. 
\end{proof}

\subsection{\bf Boundary Harnack principle} \label{S:3.2}
 In this subsection, we give the proof of Theorem \ref{T:1.3}

\begin{proof}[Proof of Theorem \ref{T:1.3}]
 For simplicity, we prove the theorem for $ \kappa=1 $. The proof for other $\kappa \in (0, 2)$ is similar  with some obvious adjustments. Fix $D,\xi,r$ as in the statement.  Let $\gamma:[0,\infty)\to [0, r/2]$ be the decreasing function defined as
\[
\gamma(s)=\begin{cases}
	\frac{1}{8}(2-\frac{s}r)^2r\quad&\hbox{ if }s<2r,\\
	0\quad&\hbox{ if }s\geq 2r.
\end{cases}
\]
Observe that $s+2\gamma(s)<2r$ for every $s\in [0,2r)$. Thus  
\begin{equation}\label{e:3.11} 
  d(\xi,y)+2\gamma(d(\xi,y))<2r
 \quad \hbox{for every } y\in B_D(\xi,2r).
\end{equation} 
Let 
\[
T :=\tau_{B_D(X_0,\gamma(d(X_0,\xi)))}=\tau_D\wedge\inf\{t\geq 0:\,d(X_t,X_0)\geq \gamma(d(X_0,\xi))\}.
\]
Define $T_0  :=0$, $Y_0 :=X_0$, and iteratively, 
\[
T_n :=T\circ \theta_{T_{n-1}}+T_{n-1} \quad \hbox{ and } \quad Y_n :=X_{T_n} \quad \hbox{ for }n\geq 1.
\]
Then  $\{T_k; k\geq 0\}$ is a sequence of a stopping times and 
 $\{Y_k; k\geq 0 \}$  is a Markov chain. For $n\geq 1$, we introduce an event 
\begin{align*}
 \mathcal{A}_n:=\big\{Y_n\in B_D(\xi,3r/2)\cup B_D(Y_{n-1},2R_{n-1}) \big\},
\end{align*}
where we use the convention $B(x,0):=\emptyset$ and denote 
\[
R_n:=\gamma\big(d(Y_n,\xi)\big)\quad  \hbox{ for every }n\geq 0. 
\]
We use $\mathcal{A}_n^c$ to denote the complement of $\mathcal{A}_n$. We divide the proof into four steps. \medskip 

(i)  We claim that for $n\geq 0$, $Y_n\in B_D(\xi,2r-\frac{r}{n+1}) $ and so, by the definition of  $\gamma$,  $R_n\geq\frac{r}{8(n+1)^2}$   on  $\{X_0\in B_D(\xi,r)\}\cap (\bigcap_{k=1}^n\mathcal{A}_k)$.
 
\smallskip

It suffices to show that $2r-d(Y_n,\xi)>\frac{r}{n+1}$ on $\{X_0\in B_D(\xi,r)\}\cap (\bigcap_{k=1}^n\mathcal{A}_k)$ for each $n\geq 0$. We do this by mathematical induction. Clearly on $\{X_0\in B_D(\xi,r)\}$,  $2r-d(Y_0,\xi)>r$ so the claim holds for $n=0$. Next, suppose  that the claim holds for $n=j-1$ for some $j\geq 1$.
 Then on $\{X_0\in B_D(\xi,r)\}\cap (\bigcap_{k=1}^j\mathcal{A}_k)$,    either $d(Y_j,\xi)<3r/2$, or $d(Y_{j-1},Y_j)<2\gamma(d(Y_{j-1},\xi))$ 
 and so by induction hypothesis,  
\[
2r-d(Y_j,\xi)  \geq  2r-d(Y_{j-1},\xi)-  2\gamma(d(Y_{j-1},\xi)) >  \frac{r}{j}  -\frac{r}{4j^2}>\frac{r}{j+1}.	
\]
That is, the claim holds for $n=j$.  This proves the Claim (i).

\medskip

 (ii) We next show that  there is a constant $0<C_1<1$ depending only on the parameters in   \eqref{eqnphi}, ${\bf (Jc)}_{ \bar r}$, ${\bf (Jt)}_{\phi,\bar r,\geq}$ and ${\bf E}_{\phi,r_0,\geq}$ 
so that
\begin{equation}\label{eqn39}
\bP_x(\mathcal{A}_n|Y_{n-1})\leq 1-C_1\quad\hbox{ for every }x\in B_D(\xi,  2r ) \hbox{ and } n\geq 1.
\end{equation} 
 
 Noticing that $\{Y_{n-1}\notin B_D(\xi,2r)\}\cap\mathcal{A}_n=\emptyset$, we only need to prove the inequality on $\{Y_{n-1}\in B_D(\xi,2r)\}$.
  By the strong Markov property of $X$ at time $T_{n-1}$, it suffices to show 
\[
\bP_x(\mathcal{A}_1)\leq 1-C_1\quad \hbox{for every }x\in B_D(\xi,2r).
\]
Let $C_2 \in (0,\infty)$  and $C_3\in(2,\infty)$ be the constants of Lemma \ref{lemma24}(b). Take $C_4\in (0,1)$  so that $C_4 + C_3 C_4^2/8=1/2$.   
We have with $R_0:=\gamma(d(x,\xi))$,  
\begin{equation}\label{eqn310}
 d(x,\xi)\geq C_3R_0+3r/2 \quad  \hbox{ for every }x\in B(\xi,2r)\setminus B(\xi,2r-C_4r).
\end{equation}
Thus  for every $x\in  B_D(\xi,2r)\setminus B_D (\xi,2r-C_4r)$,   
\begin{align*}
\bP_x(\mathcal{A}_1)
& =\bP_x\big(X_{\tau_{B_D(x,R_0)}}\in B_D(\xi,3r/2)\cup B_D(x,2R_0)\big)\\
& =\bP_x\big(\tau_{B_D(x,R_0)}=\tau_{B(x,R_0)},X_{\tau_{B_D(x,R_0)}}\in B_D(\xi,3r/2)\cup B_D(x,2R_0)\big) \\
& \leq \bP_x\big(X_{\tau_{B(x,R_0)}}\in B_D(\xi,3r/2)\cup B_D(x,2R_0)\big)\\
&\leq  1-\bP_x\big(X_{\tau_{B(x,R_0)}}\in  \big(B_D(\xi,3r/2)\cup B_D(x,2R_0)\big)^c \big) \\
&\leq  1-C_2\frac{\bE_x[\tau_{B(x,R_0)}]}{\phi(R_0)} \\
&\leq 1-C_5,
\end{align*}
where we used \eqref{eqn310} in the second inequality,   Lemma \ref{lemma24}(b) in the third  inequality,  \eqref{eqnphi} and Lemma \ref{lemma:E}(a) in the last inequality. When  $x\in B_D(\xi,2r-C_4r)$,    
\begin{eqnarray*}
\bP_x(\mathcal{A}_1)
&\leq & \bP_x\big(X_{\tau_{B(x,R_0)}} \in B_D(\xi,3r/2)\cup B_D(x, 2R_0)\big) \\
&\leq & 1-\bP_x\big(X_{\tau_{B(x,R_0)}}\in B(\xi,2r)^c\big) \\
& \leq & 1-C_6(R_0/ r)^\theta \, \frac{\bE_x[\tau_{B(x,R_0)}]}{\phi(r)}\\
&\leq & 1-C_7,
\end{eqnarray*}
where  we used \eqref{e:3.11} for the second inequality, Lemma \ref{lemma24}(c), \eqref{e:3.11} and the fact $r<( \bar r\wedge\bar{r})/2$ for the  third inequality, and \eqref{eqnphi}, the fact $R_0=\gamma(d(x,\xi))\geq C_4^2r/8$ and Lemma \ref{lemma:E}(a) for the last inequality. 
 This establishes \eqref{eqn39}.  As a consequence of \eqref{eqn39} and the strong Markov property of $X$, we have
\begin{equation}\label{eqn312}
\bP_x \left( \cap_{k=1}^n\mathcal{A}_k \right) \leq (1-C_1)^{n-1}\bP_x(\mathcal{A}_1)
\quad \hbox{for every } x\in B_D(\xi,2r)  \hbox{ and } n\geq 2.
\end{equation}

\medskip

(iii)  Let $h$ be a non-negative function that is regular harmonic with respect to $X$ in $B_D(\xi,2r)$ and vanishes on $B(\xi,2r)\setminus D$.
We claim that there is a positive constant $ C_8$ that depends only on the parameters in 
$\bf (Jc)_{ \bar r}$,  ${\bf (Jt)}_{\phi, \bar r, \geq}$, ${\bf E}_{\phi,r_0,\leq}$ and \eqref{eqnphi} so that for any $x\in B_D(\xi, r)$ and $n\geq 1$, on the event  $\mathcal{A}_1\cap \mathcal{A}_2\cap\cdots\cap \mathcal{A}_{n-1}$, 
\begin{equation}\label{eqn313}
\bE_x[h(Y_n)\cdot \1_{\mathcal{A}^c_{n}}\big|{ \sG_{T_{n-1}}}]\leq C_8  n^{ 2\theta}  \phi(r)
\int_{\sX\setminus B(\xi,3r/2)}h(y)J(\xi,dy),
\end{equation}
where $\theta$ is the exponent in $\bf (Jc)_{ \bar r}$. 

\smallskip

By the strong Markov property of $X$, 
$\bE_x[h(Y_n)\cdot \1_{\mathcal{A}^c_{n}}\big| \sG_{T_{n-1}}]=\bE_{Y_{n-1}}[h(Y_1)\cdot \1_{\mathcal{A}^c_1}]$.
By Step (i),  $Y_{n-1}\in B_D(\xi,2r-\frac{r}{n})$. So, it suffices to prove 
\begin{equation}\label{e:3.16a}
 \bE_y[h(Y_1)\cdot \1_{\mathcal{A}^c_1}]\leq C_8n^{2\theta}\phi(r)\int_{\sX\setminus B(\xi,3r/2)}h(y)J(\xi,dy)
\quad  \hbox{for  } y\in B_D(\xi,2r-\tfrac{r}{n}).
\end{equation}
For $y\in B_D(\xi,2r-\tfrac{r}{n})$, $w\in B_D(y, R_0)$ and $z\in B(y,2R_0)^c$, where $R_0:=\gamma(d(y,\xi))$,  we have 
\[
d(w, z)\geq R_0\geq\gamma(2r-\frac{r}{n})>\frac{r}{8n^2} . 
\]
Hence
\begin{align*}
d(w,z) > \frac12d(w,z)+\frac{r}{16n^2} 
 \geq \frac{1}{2}d(w,z)+\frac{1}{32n^2}d(w,\xi)\geq \frac{1}{32n^2}d(\xi,z),
\end{align*}
where the second inequality follows from $d(\xi,w)<2r$. By ${\bf (Jc)}_{ \bar r}$ and the above inequality, 
\begin{equation} \label{e:3.16}
J(w,  dz )\leq C_9\,n^{  2\theta}J(\xi,  dz )\  \hbox{ on }B(y,2R_0)^c \ \hbox{ for }y\in B_D(\xi,2r-\tfrac{r}{n}), w\in B_D(y, R_0).
 \end{equation} 
By the L\'evy system of $X$  and \eqref{e:3.16}, we have for   $y\in B_D(\xi,2r-\frac{r}{n})$, 
\begin{eqnarray} \label{e:3.18}
 \bE_y[h(Y_1)\cdot \1_{\mathcal{A}^c_1}]
&=& \bE_y \Big[\int_0^{T_1}\int_{B(\xi,3r/2)^c\cap B(y,2R_0)^c}h(z)J(X_t,dz) \Big]  \nonumber \\
&\leq &\bE_y[T_1]\cdot\sup_{w\in B_D(y,R_0)}\int_{B(\xi,3r/2)^c\cap B(y,2R_0)^c}h(z)J(w,dz)\nonumber  \\
&\leq & \bE_y[T_1]\cdot  C_9\,n^{2\theta}\int_{B(\xi,3r/2)^c\cap B(y,2R_0)^c}h(z) J(\xi,dz)   \\
&\leq & C_{10}\,\phi({r}/2)\,n^{2\theta}\int_{B(\xi,3r/2)^c\cap B(y,2R_0)^c}h(z) J(\xi,dz),\nonumber 
  \end{eqnarray}
where the last inequality holds by Lemma \ref{lemma:E}(b) and the fact $T_1:=\tau_{B_D(y,R_0)}\leq \tau_{B(y,r/2)}$.
 This establishes the inequality \eqref{e:3.16a}  in view of \eqref{eqnphi}. 

\medskip

 (iv) Now, suppose that  $h$ is  a bounded non-negative function that is regular harmonic with respect to $X$ in $B_D(\xi,2r)$ and vanishes on $B(\xi,2r)\setminus D$. By \eqref{eqn312},  $\bP_x \left( \cap_{k=1}^\infty\mathcal{A}_k \right) =0$ for every $x\in B_D(\xi, r)$. 
 Thus   for each $x\in B_D(\xi,  r )$,  by \eqref{e:3.18} for $n=1$, the strong Markov property of $X$, and \eqref{eqn312}-\eqref{eqn313},  with $R_0:= \gamma (d(x, \xi))$, 
\begin{align*}
h(x)
&=\bE_x[h(Y_1)] 
 =\bE_x[h(Y_2),\mathcal{A}_1]+\bE_x[h(Y_1),\mathcal{A}_1^c]\\
&=\sum_{n=1}^\infty \bE_x \left[ h(Y_n) \1_{\mathcal{A}_n^c};\cap_{k=1}^{n-1}\mathcal{A}_k \right]\\
&\leq C_9\,\bE_x[T_1]\int_{\sX\setminus B(\xi,3r/2)}h(y)J(\xi,dy)\\
&\qquad\quad+\sum_{n=2}^\infty(1-C_1)^{n-2}\bP_x(\mathcal{A}_1)\cdot C_8n^{ 2\theta}
\phi(r)\int_{\sX\setminus B(\xi,3r/2)}h(y)J(\xi,dy)\\
&\leq C_{11}  \left(\bE_x[T_1]+\bP_x(\mathcal{A}_1)    \phi (r)  \right)\int_{\sX\setminus B(\xi,3r/2)}h(y)J(\xi,dy).
\end{align*}
By Proposition \ref{prop31} (where we let $\xi=x$)  and \eqref{eqnphi}, we have  for every $x\in B_D(\xi,r)$,
\[
\bP_x(\mathcal{A}_1)\leq \bP_x(\tau_D>T_1)\leq C_{12}\frac{\bE_x[T_1]}{\phi(r)}.
\]
Hence we get for every $x\in B_D(\xi,r)$, 
\begin{equation}\label{eqn315}
h(x)\leq C_{11}(1+C_{12})\bE_x[T_1]\int_{\sX\setminus B(\xi,3r/2)}h(y)J(\xi,dy).
\end{equation}

For a general non-negative $h$  that is regular harmonic in $B_D(\xi,2r)$ and vanishes on $B(\xi,2r)\setminus D$, define for each $n\geq 1$, 
 $$
 h_n (x)=
 \begin{cases}  \bE_x [ (h\wedge n)(X_{\tau_{B_D(\xi,2r)}})] \quad &\hbox{for } x\in B_D(\xi, 2r), \\
 h(x)\wedge n &\hbox{for }x\in\sX\setminus B_D(\xi, 2r).
 \end{cases}
 $$
 Note that  $h_n$ is regular harmonic in $B_D(\xi,2r)$ and vanishes on $B(\xi,2r)\setminus D$, and $h_n$ increases to $h$ on $\sX$ as $n\to \infty$.  Thus we have by \eqref{eqn315} that for every $x\in B_D(\xi,r)$, 
\begin{align}\label{eqn316}
\begin{split}
h(x)&=\lim_{n\to \infty} h_n(x)\\
      &\leq\lim_{n\to \infty} C_{11}(1+C_{12})\bE_x[T_1]\int_{\sX\setminus B(\xi,3r/2)}h_n(y)J(\xi,dy)\\
     &\leq C_{11}(1+C_{12})\bE_x[T_1]\int_{\sX\setminus B(\xi,3r/2)}h(y)J(\xi,dy).
\end{split}
\end{align}

On the other hand, for $x\in B_D(\xi,r)$ and $z\in B_D(x,R_0)$ with $R_0:=\gamma(d(x,\xi))$, 
\[
d(\xi,z)<d(\xi,x)+R_0\leq  r+\gamma(r)=9r/8.
\] 
Hence for $x\in B_D(\xi,r)$, $y\in B(\xi,3r/2)^c$ and $z\in B_D(\xi,R_0)$, 
\[
 \frac{d(z,y)+d(\xi,y)}{d(\xi,y)}\leq \frac{d(z,\xi)+2d(\xi,y)}{d(\xi,y)}=\frac{d(z,\xi)}{d(\xi,y)}+2\leq\frac{9r/8}{3r/2}+2=\frac{11}{4}. 
\] 
Thus by the L\'evy system of $X$, ${\bf (Jc)}_{ \bar r}$ and \eqref{eqnphi}, we have for $x\in B_D(x, r)$, 
\begin{eqnarray}\label{eqn317}   
 h(x) &\geq &  \bE_x [h(X_{T_1}); X_{T_1}\in \sX \setminus B(\xi, 3r/2)]  \nonumber \\
&= & \bE_x \int_0^{T_1} \int_{\sX \setminus B(\xi, 3r/2)} h(y)J(X_s, dy) ds   \nonumber \\
&\geq & \bE_x[T_1] \inf_{z\in B(x,R_0)} \int_{\sX \setminus B(\xi, 3r/2)} h(y)J(z,dy) \nonumber \\
 &\geq &  \bE_x[T_1]  \int_{\sX \setminus B(\xi, 3r/2)}  h(y)\,C_{13}(\frac{d(\xi,y)}{d(\xi,y)+d(z,y)})^\theta J(\xi,dy)  \nonumber \\
&\geq &  C_{13}  ( {4}/ {11}  )^\theta. \, \bE_x[T_1]\int_{\sX\setminus B(\xi,3r/2)}h(y)J(\xi,dy), 
 \end{eqnarray}
where the constants $C_{13}$ and  $\theta$ are the corresponding parameters in  ${\bf (Jc)}_{ \bar r}$.

The conclusion of Theorem   \ref{T:1.3}]  now follows immediately from  \eqref{eqn316} and \eqref{eqn317}. 
\end{proof}

 \begin{remark}\label{R:3.4} \rm
 \begin{enumerate} [(i)]
\item In the proof of Theorem \ref{T:1.3}, we have the following approximate factorization result
for any non-negative function $h$ on $\sX$  that is regular harmonic in $B_D(\xi,2r)$ and vanishes on $B(\xi,2r)\setminus D$:
\[
h(x)\asymp \bE_x[\tau_{B_D(x,r/2})]\int_{ \sX\setminus B(\xi,3r/2)}h(y)J(\xi,dy)
\quad \hbox{for }x\in   B_D(\xi,2r/3)
\]
with the comparison constants depending only on the parameters in ${\bf (Jc)}_{ \bar r}$, ${\bf (Jt)}_{\phi, \bar r}$, ${\bf EP} _{\phi, r_0, \leq}$ and \eqref{eqnphi}. 
Indeed, the upper bound is immediate by \eqref{eqn316} as $ T_1\leq\tau_{B_D(x,r/2)}$, and the lower bound can be obtained by
   the same argument as that for  \eqref{eqn317}. More generally, by an easy modification of the parameters and $\gamma$ in the proof, 
   for positive constants $c_1,c_2,c_3$ such that $c_1+c_3<c_2<2$, we have for any non-negative function $h$  on $\sX$ 
   that is regular harmonic in $B_D(\xi,2r)$ and vanishes on $B(\xi,2r)\setminus D$, 
\begin{equation}\label{e:3.22}
h(x)\asymp \bE_x[\tau_{B_D(x,c_1r})]\int_{\sX\setminus B(\xi,c_2r)}h(y)J(\xi,dy)
\quad \hbox{for }x\in B_D(\xi,c_3r), 
\end{equation}
 with the comparison constants depending only on  $c_1,c_2,c_3$ and  the parameters in ${\bf (Jc)}_{ \bar r}$,  ${\bf (Jt)}_{\phi, \bar r}$,  ${\bf (EP)}_{\phi,r_0,\leq}$ and \eqref{eqnphi}.

\item For non-local operators and discontinuous Markov  processes with non-degenerate jump kernels, there are several works in literature that obtain  BHP through establishing  
 the approximate factorization \eqref{e:3.22} for non-negative  regular harmonic functions in $B_D(\xi,2r)$ and vanishes on $B(\xi,2r)\setminus D$.
See \cite{BKK1} for  isotropic stable processes, \cite{KK} for stable-subordinate Brownian motions on Sierpinski gasket, 
\cite{KS} for rotationally symmetric pure jump L\'evy processes on $\R^d$,  and  \cite{BKK} for a class of Feller processes in weak duality 
on metric measure spaces having strong Feller property   under a comparability condition  of
the jump kernel and a Urysohn-type  cufoff function condition on the domains of the generators of the processes and their duals.
\qed 
\end{enumerate}
 \end{remark}

\section{\bf Examples} \label{S:4}
Let $(\mcE,\mcF)$ be a quasi-regular Dirichlet form on $L^2(\sX; m)$, where $(\sX, d)$ is a Lusin space. It is well-known that $\mcE$ admits a  Beurling-Deny decomposition: there is a local symmetric form $\mcE^{(c)}$ on $\sF$, a symmetric positive $\sigma$-finite measure $J(dx,dy)$ on the product space $\sX\times\sX$ off the diagonal  and a $\sigma$-finite measure $\kappa$ on $\sX$ such that 
\[
\mcE(u,v)=\mcE^{(c)}(u,v)+\int_{\sX\times\sX\setminus {\rm \small diagonal}}\big(u(x)-u(y)\big)\big(v(x)-v(y)\big)J(dx,dy)+\int_\sX u(x)v(x)\kappa (dx)
\]
for each $u,v\in \mcF$. The measures $J$ and $\kappa$ are called the jump measure and the killing measure of $(\sE, \sF)$.
We   say that $(\mcE,\mcF)$ is of pure jump type if  $\mcE^{(c)}=0$. 

\begin{lemma}\label{lemmaj1}
Suppose that  $(\mcE,\mcF)$ is a quasi regular Dirichlet form on $L^2(\sX; m)$, where $(\sX, d)$ is a Lusin space and $m$ is a 
$\sigma$-finite measure with full support on $\sX$. Denote by $\kappa$ the killing measure of $(\sE, \sF)$.
 Suppose that  there is a symmetric $\sigma$-finite measure $\hat{J}(dx,dy)$ on $\sX\times\sX\setminus {\rm \small diagonal}$,\and constants $0<C_1<C_2<\infty$ such that 
\begin{align} \label{eqn41}
C_1\int_{\sX\times\sX\setminus {\rm \small diagonal}}&\big(u(x)-u(y)\big)^2\hat{J}(dx,dy)\\
&\leq \mcE(u,u)\leq C_2\int_{\sX\times\sX\setminus {\rm \small diagonal}}\big(u(x)-u(y)\big)^2\hat{J}(dx,dy)+C_2\int_\sX u^2(x)\kappa (dx)
\nonumber
\end{align}
for each $u\in \mcF_b$. Then $(\mcE,\mcF)$ is of pure jump type, i.e., its strongly local part $\mcE^{(c)}$ vanishes. 
\end{lemma}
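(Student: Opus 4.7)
The plan is to prove $\mcE^{(c)}(u,u)=0$ for every $u\in\mcF_b$; once this is done, Cauchy–Schwarz for the nonnegative symmetric form $\mcE^{(c)}$ gives $\mcE^{(c)}\equiv 0$ on $\mcF_b$, which then extends to all of $\mcF$ by truncation and lower semicontinuity of $\mcE^{(c)}$. For such a $u$, the Beurling–Deny decomposition together with \eqref{eqn41} and $\mcE(u,u)<\infty$ force
\[
\int (u(x)-u(y))^{2}\,\hat J(dx,dy),\quad \int (u(x)-u(y))^{2}\,J(dx,dy),\quad \int u^{2}\,d\kappa
\]
to all be finite. The core of the argument is to build a family of oscillatory test functions whose jump and killing energies vanish in the limit while their strongly local energies, averaged over an auxiliary parameter, recover $\tfrac{1}{2}\mcE^{(c)}(u,u)$.

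For $n\in\N$ and $\alpha\in[0,1]$ set
\[
F_{n,\alpha}(t):=\frac{\sin(2\pi n t+2\pi\alpha)-\sin(2\pi\alpha)}{2\pi n},\qquad v_{n,\alpha}:=F_{n,\alpha}(u).
\]
Then $F_{n,\alpha}\in C^{\infty}(\R)$ with $F_{n,\alpha}(0)=0$, $\|F_{n,\alpha}\|_{\infty}\leq 1/(\pi n)$ and $\|F_{n,\alpha}'\|_{\infty}\leq 1$, so by Lipschitz composition $v_{n,\alpha}\in\mcF_b$ and
\[
|v_{n,\alpha}(x)|\leq\min\bigl(|u(x)|,\tfrac{1}{\pi n}\bigr),\qquad |v_{n,\alpha}(x)-v_{n,\alpha}(y)|\leq\min\bigl(|u(x)-u(y)|,\tfrac{2}{\pi n}\bigr).
\]
Since $F_{n,\alpha}'(t)=\cos(2\pi(nt+\alpha))$, the chain-rule identity for the energy measure of the strongly local part of a quasi-regular Dirichlet form gives
\[
d\mu^{(c)}_{\langle v_{n,\alpha}\rangle}=\cos^{2}\bigl(2\pi(nu+\alpha)\bigr)\,d\mu^{(c)}_{\langle u\rangle},
\]
where $\mu^{(c)}_{\langle w\rangle}$ denotes the energy measure associated with $\mcE^{(c)}$, normalized so that $\mu^{(c)}_{\langle w\rangle}(\sX)=2\mcE^{(c)}(w,w)$.

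Using \eqref{eqn41} applied to $v_{n,\alpha}$ together with the Beurling–Deny decomposition,
\[
\mcE^{(c)}(v_{n,\alpha},v_{n,\alpha})\leq \mcE(v_{n,\alpha},v_{n,\alpha})\leq C_2\!\int\!(v_{n,\alpha}(x)-v_{n,\alpha}(y))^{2}\hat J(dx,dy)+C_2\!\int v_{n,\alpha}^{2}\,d\kappa.
\]
Integrating over $\alpha\in[0,1]$, swapping the order of integration by Fubini, and applying the dominated convergence theorem with dominating functions $(u(x)-u(y))^{2}\in L^{1}(\hat J)$ and $u^{2}\in L^{1}(\kappa)$ — the pointwise decay follows from $\|v_{n,\alpha}\|_{\infty}\leq 1/(\pi n)\to 0$ — forces the right-hand side to tend to $0$ as $n\to\infty$. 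On the other hand, the identity $\int_{0}^{1}\cos^{2}(2\pi(nu+\alpha))\,d\alpha=\tfrac{1}{2}$ combined with Fubini gives, for every $n$,
\[
\int_{0}^{1}\mcE^{(c)}(v_{n,\alpha},v_{n,\alpha})\,d\alpha=\tfrac{1}{2}\int \tfrac{1}{2}\,d\mu^{(c)}_{\langle u\rangle}=\tfrac{1}{2}\mcE^{(c)}(u,u).
\]
Passing to the limit yields $\mcE^{(c)}(u,u)=0$. The only delicate ingredient is the chain-rule identity for $\mu^{(c)}_{\langle F_{n,\alpha}(u)\rangle}$; in the quasi-regular setting this is a consequence of the derivation property of the strongly local part, and if necessary may be reduced to the standard regular Dirichlet form statement via quasi-homeomorphism. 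Everything else — Lipschitz composition preserving $\mcF$, the Beurling–Deny decomposition, and dominated convergence — is routine.
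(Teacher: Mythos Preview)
Your proof is correct and takes a genuinely different route from the paper's. The paper slices $u$ into level layers $u_{k,i}:=(u-(i-1)/k)^{+}\wedge(1/k)$, so that $u=\sum_i u_{k,i}$ with each $|u_{k,i}|\le 1/k$; strong locality alone (the cross terms $\mcE^{(c)}(u_{k,i},u_{k,j})$ vanish because $u_{k,i}$ is constant on $\operatorname{supp}[u_{k,j}]$ for $i<j$) yields $\mcE^{(c)}(u,u)=\sum_i\mcE^{(c)}(u_{k,i},u_{k,i})$, and the right side is bounded via \eqref{eqn41} by sums that collapse pointwise as $k\to\infty$. Your argument instead composes $u$ with the oscillatory family $F_{n,\alpha}$ and recovers $\tfrac12\mcE^{(c)}(u,u)$ by averaging the phase, relying on the $C^1$ chain rule $d\mu^{(c)}_{\langle F(u)\rangle}=(F'(u))^2\,d\mu^{(c)}_{\langle u\rangle}$. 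Both methods exploit the same mechanism---small-amplitude test functions kill the jump and killing parts while preserving a fixed fraction of the local energy---but the paper's version is more elementary (it needs only strong locality and normal contractions, not the full derivation/chain-rule property for energy measures), whereas yours is smoother and avoids the infinite discrete sum. Your caveat about reducing the chain rule to the regular case via quasi-homeomorphism is appropriate; everything else (Lipschitz composition, Fubini, dominated convergence with the dominators you name) is in order.
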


\begin{proof} For $u\in \sF$, since $\mcE^{(c)}(u,u)^{1/2} \leq \mcE^{(c)}(u^+,u^+)^{1/2}+\mcE^{(c)}(v^-,v^-)^{1/2}$,
it suffices to show that $\sE^{(c)}(u, u)=0$ for every non-negative $u\in \sF_b$. 
 
For any non-negative $u\in \sF_b$, define
\[
u_{k,i}:= (u - (i-1)/k)^+\wedge (1/k) \quad  \hbox{ for }i\geq 1,\,k\geq 1.
\]
Note that for each $k\geq 1$, $0\leq u_{k,i}\leq 1/k$ for each $i\geq 1$ and $u=\sum_{i=1}^\infty u_{k,i}$. Clearly, on $\sX$, 
\begin{align}
\label{eqn42}&\lim\limits_{k\to\infty}\sum\limits_{i=1}^\infty u^2_{k,i}
\leq\lim\limits_{k\to\infty}\sum\limits_{i=1}^\infty\frac1  ku_{k,i}= \lim_{k\to \infty}  u/k  = 0,\\
\label{eqn43}&\sum\limits_{i=1}^\infty u^2_{k,i}\leq \sum_{i=1}^\infty u\cdot u_{k,i}=u^2. 
\end{align}
Moreover, for each $x,y\in \sX$ and $k\geq 1$,  $|u_{k,i}(x)-u_{k,i}(y)|\leq\frac1k$ for each $i\geq1$ and
$$
\big|u(x)-u(y)\big|=\big|\sum_{i=1}^\infty\big(u_{k,i}(x)-u_{k,i}(y)\big)\big|   =  
\sum_{i=1}^\infty\big|u_{k,i}(x)-u_{k,i}(y)\big|
$$ 
by considering the cases of $u(x)>u(y)$ and $u(x)\leq u(x)$ separately. 
 Hence 
\begin{align}
\label{eqn44}&\lim\limits_{k\to\infty}\sum\limits_{i=1}^\infty\big(u_{k,i}(x)-u_{k,i}(y)\big)^2\leq\lim\limits_{k\to\infty}\frac{1}{k} \sum\limits_{i=1}^\infty|u_{k,i}(x)-u_{k,i}(y)|=\lim\limits_{k\to\infty}\frac{1}{k}|u(x)-u(y)|=0,\\
\label{eqn45}&\sum\limits_{i=1}^\infty\big(u_{k,i}(x)-u_{k,i}(y)\big)^2\leq  \big|u(x)-u(y)\big|\sum\limits_{i=1}^\infty\big|u_{k,i}(x)-u_{k,i}(y)\big|=\big(u(x)-u(y)\big)^2.
\end{align}
Thus for each $k\geq 1$, as  $u_{k, i}=\frac1k$ on $\operatorname{supp}[u_{k, j}]$ for $1\leq i<j$, by the strong locality of $\sE^{(c)}$ 
(see \cite[Theorem 2.4.3]{CF}), 
\begin{align*}
\mcE^{(c)}(u,u)&=\sum_{i=1}^\infty \mcE^{(c)}(u_{k,i},u_{k,i})  
\leq  \sum_{i=1}^\infty \mcE(u_{k,i},u_{k,i})\\
&\leq C_2\sum_{i=1}^\infty\Big(\int_{\sX\times\sX\setminus {\rm \small diagonal}}\big(u_{k,i}(x)-u_{k,i}(y)\big)^2\hat{J}(dx,dy)+\int_\sX u_{k,i}^2(x)\kappa (dx)\Big)\\
&=C_2\int_{\sX\times\sX\setminus {\rm \small diagonal}}\sum_{i=1}^\infty\big(u_{k,i}(x)-u_{k,i}(y)\big)^2\hat{J}(dx,dy)
+C_2\int_{\sX}\sum_{i=1}^\infty u^2_{k,i}(x)\kappa (dx) . 
\end{align*}
 Then,  by \eqref{eqn41}-\eqref{eqn45} and the dominated convergence theorem, we conclude that $\mcE^{(c)}(u,u)=0$ by taking  $k\to\infty$ in the above inequality.
\end{proof}

\begin{corollary}\label{coroj2}
Suppose that $(\sX, d)$ is a locally compact separable metric space and $m$ is a Radon measure on $\sX$ with full supprt.
If $X$ is $m$-symmetric and  that {\bf HK$(\phi,t_0)$} holds for some $t_0\in(0,\infty]$, 
then the Dirichlet form $(\mcE,\mcF)$ associated with $X$ is regular on $L^2(\sX; m)$ and is of pure jump type admiting
 no killings inside $\sX$. 
Moreover, there is a symmetric function $J(x, y)$ on $\sX\times \sX\setminus {\rm diagonal}$ so that 
$J(dx, dy):=J(x, y) m(dx)m(dy)$ is the jump measure of $(\sE, \sF)$ and has the property $J_\phi$. Moreover, 
\begin{equation}\label{e:4.6}
\sF= \left\{ u\in L^2 (\sX; m): \int_{\sX\times \sX} (u(x)-u(y))^2 J(x, y) m(dx) m(dy) <\infty \right\}
\end{equation} 
and
$$
\sE (u, v) = \frac12 \int_{\sX\times \sX} (u(x)-u(y)) (v(x)-v(y)) J(x, y) m(dx) m(dy) 
\quad \hbox{for } u, v\in \sF.
$$
\end{corollary}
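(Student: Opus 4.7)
The plan is to combine Proposition \ref{P:2.1} with Lemma \ref{lemmaj1}. Since $X$ is $m$-symmetric and ${\rm\bf HK}(\phi,t_0)$ holds, $X$ is a conservative Feller Hunt process on the locally compact separable metric space $(\sX,d)$ (the Feller and strong Feller properties follow from ${\rm\bf HK}(\phi,t_0)$, as noted in the excerpt, and the infinite lifetime is built into the definition of ${\rm\bf HK}(\phi,t_0)$). Standard Fukushima theory then yields that the associated symmetric Dirichlet form $(\sE,\sF)$ on $L^2(\sX;m)$ is regular, and the conservativeness of $X$ forces the killing measure $\kappa$ in the Beurling--Deny decomposition to vanish. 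By Proposition \ref{P:2.1}, ${\rm\bf HK}(\phi,t_0)$ also gives ${\bf J}_\phi$, so there is a kernel $J(x,y)$ with the two-sided bounds of ${\bf J}_\phi$ such that $(J(x,y)m(dy), t)$ is a L\'evy system of $X$. Applying \eqref{e:aa7} to the pairs $(f,g)$ and $(g,f)$ for $f,g\in C_c(\sX)$ with disjoint supports and using $p(t,x,y)=p(t,y,x)$ gives $J(x,y)=J(y,x)$ $m\times m$-a.e., so by modifying $J$ on a null set we may take $J$ symmetric while still satisfying ${\bf J}_\phi$.

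Next I would establish a sandwich bound for $\sE$. Because $X$ is conservative and $m$-symmetric,
\[
\frac{1}{t}\langle u-P_t u, u\rangle_{L^2(m)} = \frac{1}{2t}\int_{\sX\times\sX}(u(x)-u(y))^2 p(t,x,y) m(dx)m(dy)
\]
for every $u\in L^2(\sX;m)$, and this quantity is nondecreasing as $t\downarrow 0$ with limit $\sE(u,u)$. Using $a\wedge b\leq b$ in the upper bound of ${\rm\bf HK}(\phi,t_0)$ gives $p(t,x,y)\leq c_2\, t\, J(x,y)$ for $t\in(0,t_0)$, hence $\sE(u,u)\leq \frac{c_2}{2}\int (u(x)-u(y))^2 J(x,y) m(dx)m(dy)$. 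For the reverse inequality, for each fixed $x\neq y$ the lower bound in ${\rm\bf HK}(\phi,t_0)$ combined with $V(x,\phi^{-1}(t))\downarrow 0$ gives $\liminf_{t\to 0}p(t,x,y)/t \geq c_1 J(x,y)$, so Fatou's lemma yields $\sE(u,u) \geq \frac{c_1}{2}\int (u(x)-u(y))^2 J(x,y)m(dx)m(dy)$. With $\hat{J}(dx,dy):=J(x,y)m(dx)m(dy)$, these are precisely the bounds \eqref{eqn41} required by Lemma \ref{lemmaj1} (with $\kappa = 0$), and the lemma then yields $\sE^{(c)}=0$: the form is of pure jump type.

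Finally, to identify the Beurling--Deny jump measure $J^{BD}$, apply the decomposition to $f,g\in C_c(\sX)\cap \sF$ with disjoint supports. Using $(f(x)-f(y))(g(x)-g(y))=-f(x)g(y)-f(y)g(x)$ and the symmetry of $J^{BD}$ gives $\sE(f,g) = -2\int f(x)g(y) J^{BD}(dx,dy)$, while $\sE(f,g) = -\lim_{t\to 0}\frac{1}{t}\int f P_t g\, m = -\int f(x)g(y) J(x,y)m(dx)m(dy)$ by \eqref{e:aa7}. Comparison yields $J^{BD}(dx,dy) = \frac{1}{2}J(x,y)m(dx)m(dy)$, and hence the claimed formula for $\sE$ on $\sF$. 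The characterization \eqref{e:4.6} of $\sF$ then follows from the sandwich estimate, since $u\in\sF$ iff $\sup_{t>0}\frac{1}{t}\langle u-P_tu,u\rangle<\infty$. The main obstacle is the sandwich bound itself: the upper bound relies on the pointwise inequality $p(t,x,y)\leq c_2 t J(x,y)$ from ${\rm\bf HK}(\phi,t_0)$, while the lower bound requires Fatou applied to the pointwise liminf of $p(t,x,y)/t$ together with the monotonicity of the Dirichlet difference quotient in $t$ to ensure the limit captures $\sE(u,u)$.
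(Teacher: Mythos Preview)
Your proposal is correct and follows essentially the same line as the paper: obtain two-sided sandwich bounds for $\sE(u,u)$ from ${\bf HK}(\phi,t_0)$, invoke Lemma~\ref{lemmaj1} (with $\kappa=0$ by conservativeness) to kill $\sE^{(c)}$, and identify the jump kernel via disjoint-support test functions using \eqref{e:aa7}/\eqref{e:4.7}. The one genuine difference is in the last step: the paper derives \eqref{e:4.6} by citing that a conservative Dirichlet form coincides with its active reflected Dirichlet form (from \cite{CF}), whereas you extract it directly from the sandwich estimate together with the characterization $u\in\sF\iff\sup_{t>0}\tfrac{1}{t}\langle u-P_tu,u\rangle<\infty$; your argument is more self-contained and avoids the reflected-form machinery.
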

\begin{proof}
By the proof of \cite[Theorem 4.14]{CK} and  {\bf HK$(\phi,t_0)$}, we know that the transition density function $p(t,x,y)$ of $X$ has a jointly continuous modification on $(0, \infty) \times \sX \times \sX$. Consequently, $X$ is a Feller process on $(\sX,d)$ having strong Feller property. It follows that the Dirichlet form $(\mcE,\mcF)$ of $X$ is regular on $L^2(\sX; m)$. Denote by $\{P_t; t\geq 0\}$ the transition semigroup of $X$. 
It is known that 
\begin{eqnarray}
\sF &=& \left\{ u\in L^2(\sX;m): \sup_{t>  0}\frac1t(u-P_tu,u) < \infty \right\}, \nonumber \\
\sE(u, v)&=& \lim_{t\to 0} \frac1t(u-P_tu,v) \quad \hbox{for } u, v\in \sF.   \label{e:4.7} 
\end{eqnarray}   
It follows from {\bf HK$(\phi,t_0)$}  that there are constants $C_2>C_1>0$ so that for each $u\in L^2(\sX;m)$, 
\begin{align*}
C_1\int_{\sX\times\sX} \frac{\big(u(x)-u(y)\big)^2}{\phi\big(d(x,y)\big)V(x,d(x,y))}m(dx)&m(dy)\leq 
 \mcE(u,u) = \lim\limits_{t\to0}\frac1t(u-P_tu,u)  \\
&\leq C_2\int_{\sX\times\sX} \frac{\big(u(x)-u(y)\big)^2}{\phi\big(d(x,y)\big)V(x,d(x,y))}m(dx)m(dy).
\end{align*}
By Lemma \ref{lemmaj1}, 
 the Dirichlet form is of pure jump type. Since $X$ is conservative, the killing measure $\kappa =0$.
By \eqref{e:4.7} and {\bf HK$(\phi,t_0)$}, there are constants $C_4>C_3>0$ so that for any 
non-negative $u,v\in C_c(\sX)\cap \mcF$ with $\operatorname{supp}[u]\cap\operatorname{supp}[v]=\emptyset$,
\begin{align*}
C_3\int_{\sX\times\sX}&\frac{u(x)v(y)}{\phi\big(d(x,y)\big)V(x,d(x,y))}m(dx)m(dy)\\&\leq \int_{\sX\times\sX} u(x)v(y)J(dx,dy)\leq C_4\int_{\sX\times\sX} \frac{u(x)v(y)}{\phi\big(d(x,y)\big)V(x,d(x,y))}m(dx)m(dy)
\end{align*}
for non-negative $u,v\in C_c(\sX)\cap \mcF$ such that $\operatorname{supp}[u]\cap\operatorname{supp}[v]=\emptyset$. We conclude that  there is a symmetric kernel $J(x,y)$ such that $J(dx,dy)=J(x,y)m(dx)m(dy)$ and that
$$
\frac{C_3}{\phi\big(d(x,y)\big)V(x,d(x,y))} \leq J(x, y) 
\leq \frac{C_4}{\phi\big(d(x,y)\big)V(x,d(x,y))}
$$
for $m$-a.e. $x\not= y\in \sX$, that is, ${\bf J}_\phi$ holds. Note that in this case,  $(J(x,y)m(dy),t)$ is a L\'evy system of the Feller process $X$. Since $(\sE, \sF)$ is conservative, it coincides with its active reflected Dirichlet form; see \cite{CF}. Thus \eqref{e:4.6} holds. 
\end{proof}

\begin{example} \label{E:4.3} \rm (Symmetric mixed-stable-like process)  
  
Let $(\sX, d)$ be a locally compact separable metric space and $m$ be a Radon measure on $\sX$ with full support. 
Suppose that $(\sE, \sF)$ is a symmetric regular Dirichlet form on $L^2(\sX; m)$ of pure jump type with jump kernel
$J(dx,dy)= J(x, y) m(dx)m(dy)$; that is, 
$$
\sE (u, v) = \frac12 \int_{\sX\times \sX} (u(x)-u(y)) (v(x)-v(y)) J(x, y) m(dx) m(dy) 
\quad \hbox{for } u, v\in \sF.
$$
Note that $(J(x,y)m(dy),t)$ is a L\'evy system of the Hunt process $X$. 
Let $\phi:[0,\infty)\to [0,\infty)$ be a strictly increasing function with  $\phi(0)=0$ and $\phi(1)=1$
satisfying the condition  \eqref{eqnphi}. 
It is known that under {\rm VD} and {\rm RVD},  
\begin{equation*}\label{e:1.5} 
 {\bf HK}(\phi )\Longleftrightarrow {\bf J}_{\phi }+{\bf E}_{ \phi}
\Longleftrightarrow {\bf J}_{\phi }+{\bf (EP)}_{  \phi,\leq}.
\end{equation*}
 The first equivalence is established in  \cite[Theorem 1.13]{CKW} (though $\sX$ is assumed to be unbounded in \cite{CKW},   the results  there hold for bounded $\sX$ as well),  while the second equivalence follows from \cite[Lemma 2.7]{CKW} and Lemma \ref{lemma22}. Thus by Corollary \ref{C:1.7}, under $ {\bf HK}(\phi)$,  the uniform BHP holds with radius $R={\rm diam}(\sX)/(2\lambda_0)$ 
on any proper open set $D\subset \sX$,   where $\lambda_0\geq 2$ is the constant in RVD \eqref{e:rvd}.  

\medskip 

When  $ \beta <2$ in \eqref{eqnphi}, by \cite[Remark 1.14]{CKW}, we know that under VD and RVD,  
\[
{\bf HK}(\phi )\Longleftrightarrow {\bf J}_{\phi } .
\]
Hence by Corollary \ref{C:1.7}, under  VD, RVD and ${\bf J}_{\phi }$ with  $ \beta <2$,  the uniform BHP holds with radius $R={\rm diam}(\sX)/( 2 \lambda_0)$ on any proper open set $D\subset \sX$. 
\end{example}

\begin{example} \label{E:4.4} \rm
 The last paragraph of Example \ref{E:4.3} in particular implies the uniform BHP holds for the following three  families of discontinuous processes.
 
 \begin{enumerate} [(i)]
 \item (Stable-like non-local operator of divergence form on $\R^d$) 
 
 Suppose that $d\geq 1$,  $\alpha \in (0, 2)$ and $c(x, y)$ is a   symmetric function on $\R^d\times \R^d$  that is bounded between two positive constants.  Let 
 \begin{eqnarray*}
 \sF &=& \left\{ u\in L^2(\R^d; dx):  \int_{\R^d\times \R^d} \frac{ (u(x)-u(y)) ^2}{  |x-y|^{d+\alpha} }   dx dy <\infty \right\}, \\ 
 \sE (u, v) &=&  \frac12 \int_{\R^d \times \R^d } (u(x)-u(y)) (v(x)-v(y)) \frac{c(x, y)}{|x-y|^{d+\alpha}} dx dy  
\quad \hbox{for } u, v\in \sF.
   \end{eqnarray*}
   It is easy to see that  $(\sE, \sF)$  is a regular Dirichlet form on   $L^2(\R^d; dx)$.
   Its infinitesimal generator $\sL$ is a stable-like non-local operator of divergence form:
   $$
   \sL u (x)= \lim_{\eps \to 0}  \int_{\R^d  }  \1_{\{ |y-x|>\eps\}} (u(y)-u(x))  \frac{c(x, y)}{|x-y|^{d+\alpha}}   dy .  
 $$
 As a special case in \cite{CK}, we know  there is a Feller process $X$ on $\R^d$ associate with the regular Dirichlet form $(\sE, \sF)$ on
  $L^2(\R^d; dx)$ and ${\rm {\bf HK}}(\phi)$ holds for $X$ with $\phi (r)=r^\alpha$. Hence by Corollary \ref{C:1.7},
 the uniform BHP holds for $X$  with radius $R=\infty$ on any open subset of $\R^d$. 
 The Feller process $X$ is called a symmetric stable-like process on $\R^d$ in \cite{CK}. 
 
 \item (Isotropic unimodal L\'evy processes on $\R^d$)  
 
 Suppose that $X$ is a rotationally symmetric unimodal L\'evy process on $\R^d$ whose 
 L\'evy exponent $\xi \to \Psi (|\xi |)$ satisfies the following upper and lower scaling properties: there are constants
$0<\beta_1< \beta  <2 $ and $c>1$ so that
\begin{equation}\label{e:C1.2}
c^{-1} \left(\frac{R}{r} \right)^{\beta_1} \leq \frac{\Psi (R ) }{\Psi (r ) } \leq
c \left(\frac{R}{r} \right)^{ \beta }
\quad \hbox{for any } R\geq r>0.
\end{equation}
Condition \eqref{e:C1.2}
implies that the L\'evy process is  purely discontinuous, and by
\cite[Corollary 23]{BGR1},  its L\'evy intensity measure has density kernel 
$x \to j(|x|)$ which satisfies 
$$
 \frac{c^{-1} }{|x|^d \Phi (|x|)} \le j(|x|)   \le  \frac{c }{|x|^d \Phi (|x|)}  \quad \text{for all }x\neq0,
$$
where $\Phi  ( r) :=  \max_{|x| \le r} 1/\Psi (1/|x|) $.
It is easy to see that $\Phi (r)$ is comparable to $1/\Psi (1/r)$.
It follows from \eqref{e:C1.2}
that the
same two-sided estimates hold for $\Phi$ in place of $\Psi$. In other words, condition 
${\bf J}_{\phi }$ holds with  $ \beta <2$. Euclidean space $\R^d$ has infinite diameter, 
and when equipped with Lebesgue measure, is VD and RVD. So by Corollary \ref{C:1.7}, the uniform BHP  holds with radius $R=\infty$ 
on any proper open set $D\subset \R^d$.    This extends the BHP  result of \cite{KSV2}
 in two aspects: (a) from $R=1$ to $R=\infty$;  (b) The class of L\'evy processes here is more general
 without complete Bernstein function condition.  
 
 \medskip
 
 \item (Censored   stable-like processes on Ahlfors  $d$-regular subsets of   $\R^d$)
 
 Let $D\subset \R^d$ be an open Ahlfors  $d$-regular set, that is, there is a constant $c>0$ and $ r_D\in (0, \infty]$ so that   
 $m(D\cap B(x, r)) \geq c m(B(x, r))$ for every $x\in D$ and $r\in (0, r_D)$ where $m$ is the Lebesgue measure on $\R^d$. 
 Observe   that under this assumption, 
 $(D, m)$ equipped with the Euclidean metric has the VD and RVD properties for small $r>0$.  
 Examples of $d$-regular open sets include   Lipschitz open sets and some non-smooth domains such as snowflake domains. 
 Let $c(x, y)$ be a symmetric function defined on $D\times D$ that is bounded between two positive constants, and  $\alpha \in (0, 2)$ {  be} a constant. Define 
 \begin{eqnarray*}
 \sF &=& \left\{ u\in L^2(D; dx): \int_{D\times D} \frac{ (u(x)-u(y) )^2 }{ |x-y|^{d+\alpha} } dx dy<\infty \right\}, \\
 \sE(u, v) &=&\frac12  \int_{D\times D} c(x,y)\frac{(u(x)-u(y)) (v(x)-v(y))}{|x-y|^{d+\alpha} } dx dy
 \quad \hbox{for } u, v \in \sF.
 \end{eqnarray*}
 By \cite{CK}, $(\sE, \sF)$ is a regular Dirichlet form on $L^2(\overline D; dx)$ and its associated Hunt process $X$ has a jointly H\"older 
 continuous transition density function $p(t, x, y)$ on $(0, \infty)\times \overline D \times \overline D$ that has a two-sided estimates 
\[ 
p(t,x,y)\asymp t^{-d/\alpha}\wedge\frac{t}{|y-x|^{d+\alpha}}\quad\hbox{ for }t<r_D^\alpha,\,x,y\in \overline{D}. 
\]
 So $X$ can be refined to be a Feller process that can starting from every point in $\overline D$ and has strong Feller property.
 The infinitesimal generator of $X$ and $(\sE, \sF)$ on $L^2(\overline D; dx)$ is 
 $$
 \sL u(x)= \lim_{\eps \to 0} \int_D \1_{\{|y-x|>\eps\}} (u(y)-u(x)) \frac{c(x, y)}{|y-x|^{d+\alpha}} dy.
 $$

Clearly, {\bf (Jc)} and ${\bf (Jt)}_{\phi, \bar r}$  hold for $\bar{r}=r_D/\lambda_0$, where $\lambda_0\geq 2$ is the constant in RVD \eqref{e:rvd}, and ${\bf (EP)}_{\phi,r_0,\leq}$ holds for any $r_0>0$ by the heat kernel estimate and by a same proof as Proposition \ref{P:2.1}. By  Theorem \ref{T:1.3}, the uniform BHP for $X$ holds with radius $R=r_D/(2\lambda_0)$ on any (relative) open set of $\overline D$. In particular, the scale invariant BHP holds on the open set $D$ but it is possible that $\partial D$ can be invisible for $X$ for small $\alpha\in (0, 1)$, in which case the BHP on $D$ is just a consequence of the elliptic Harnack inequality on $\overline D$. We next discuss when $\partial D$ is visible for $X$. 
 
 Let $\sigma_{\partial D} :=\inf\{t>0: X_t \in \partial D\}$. 
 We say $\partial D$ is polar for $X$ if $\bP_x (\sigma_{\partial D}<\infty)=0$ for every $x\in \overline D$; otherwise, we say
 $\partial D$ is non-polar for $X$.   Denote by $X^0$  the  subprocess  $X$ killed upon leaving the open set $D$.
 When $c(x, y)$ is a constant, the processes $X$ and $X^0$ are  called, respectively, a  reflected $\alpha$-stable process on $\overline D$ and     a censored (or resurrected) $\alpha$-stable process in $D$.  It is shown in \cite[Theorem 2.5]{BBC} that  $\partial D$ is polar for $X$   if and only if $\partial D$ is polar with respect to the isotropic $\alpha$-stable process on $\R^d$,
 which is equivalent to $\partial D$ being of zero $W^{\alpha/2, 2}$-capacity. 
 Suppose now $\partial D$ is non-polar for $X$. 
 Then there are non-trivial  functions of $X$ that are regular harmonic in $B(\xi, r)\cap D$ and vanish on $B(\xi, r)\cap \partial D$
 for some $\xi \in \partial D$ and $r>0$. 
 As noted at the end of last paragraph, the scale-invariant BHP for $X$   holds  with radius $R= \diam (D)/(2\lambda_0)$ on the open set $D$.
 In particular, when $D$ is a  Lipschitz open set in $\R^d$, $\partial D$ is polar for $X$ if and only if $\alpha \in (0, 1]$
 (in this case $X^0=X$ is conservative), 
 and when $\alpha \in (1, 2)$, $X$ hits $\partial D$ in  finite time with positive probability and the scale invariant BHP for $X$ holds on the open set $D$
 with radius $R= \diam (D)/(2\lambda_0)$. This not only recovers but extends significantly  Theorem 1.2 of  \cite{BBC}, where the invariant BHP
 is established for censored $\alpha$-stable processes in bounded $C^{1,1}$-open sets for $\alpha \in (1, 2)$. 
   \end{enumerate} 
 \end{example}

 \begin{example}\label{E:4.5} \rm
 (Tempered  symmetric mixed stable-like processes in metric measure spaces)

Let $(\sX, d)$ be a locally compact separable metric space and $m$ be a Radon measure on $(\sX, d)$ with full support. 
Suppose that there is an increasing function $V(r)$ so that   there exist constants $c_2>c_1>1$
\begin{equation} \label{e:4.9a}
c_1V(r)\leq V(2r)\leq c_2 V(r)  \quad \hbox{and} \quad 
m(B(x, r)) \asymp V(r)  \quad \hbox{  for all  } x \in \sX  \hbox{ and }  r>0. 
\end{equation}
Suppose that $m(\sX)=\infty$ and $(\sE, \sF)$ is a symmetric regular Dirichlet form on $L^2(\sX; m)$ of pure jump type with jump kernel
$J(dx,dy)= J(x, y) m(dx)m(dy)$; that is, 
$$
\sE (u, v) = \frac12 \int_{\sX\times \sX} (u(x)-u(y)) (v(x)-v(y)) J(x, y) m(dx) m(dy) 
\quad \hbox{for } u, v\in \sF.
$$
Note that $(J(x,y)m(dy),t)$ is a L\'evy system of the Hunt process $X$. 
Suppose that  $c(x, y)$ on $\sX\times \sX$  is a measurable symmetric function  that is bounded between two positive constants,
$\phi$ is an increasing function on $[0, \infty)$  satisfying condition  \eqref{eqnphi}  with $0<\beta_1\leq  \beta <2$, and $\lambda  >0$ and 
$\beta \in (0, 1]$ are constants so that 
\begin{equation}
J(x, y) = \frac{c(x, y)} {V(d(x, y)) \phi (d(x, y)) \exp ( \lambda d(x, y)^\beta )} \quad \hbox{for } x\neq y \in \sX.
\end{equation} 
It is easy to check (see Remark \ref{R:1.8}(i)) that $J$ satisfies conditions    ${\bf (Jc)}_r$  and ${\rm \bf (Jt)}_{\phi, \bar r}$ for any $r, \bar r\in (0, \infty)$. On the other hand, by \cite[Theorem 1.2]{CK2} and \cite[Remark 1.3]{CKK},  the regular Dirichlet form 
$(\sE, \sF)$ on $L^2(\sX; m)$ has a conservative Feller process $X$ associated with it, which has 
 a jointly continuous heat kernel $p(t, x, y)$ and  there are positive constants $ c_3\geq c_4>0$ and $C\geq 1$
so that 
\begin{eqnarray}\label{e:4.10}
&& \  C^{-1}\left( \frac{1}{V( \phi^{-1}(t))}   \wedge \frac{t}{V(d(x, y)) \phi(d(x, y))\exp(  c_3  d(x, y)^\beta)}  \right)  \nonumber \\
&\leq &  p(t, x, y) \,  \leq \, C \left( \frac{1}{V( \phi^{-1}(t))}   \wedge \frac{t}{V(d(x, y)) \phi(d(x, y)) \exp ( c_4  d(x, y)^\beta )} \right) 
\end{eqnarray} 
for every $t\in (0, 1]$ and $x, y\in \sX$. 
  Thus $ {\bf (EP)}_{\phi,\leq, 1/2}$ holds by the same argument as that for  \cite[Lemma 2.7]{CKW},
and so  ${\bf (EP)}_{\phi,r_0,\leq}$ holds for every $r_0>0$   by Proposition \ref{P:2.1}(a). 
We conclude from   Theorem \ref{T:1.3}  that  for any $R>0$, the uniform BHP for $X$ holds with radius $R $ on any (relative) open set of $\sX$.  

Using the result from  \cite{CKW}, we can in fact drop the global comparability assumption  \eqref{e:4.9a},  replace it by
the   VD and RVD assumptions  on $m$, and by a similar argument as that in \cite{CK2} to 
 get the two-sided heat kernel estimates \eqref{e:4.10} but with $V(x, d(x, y))$ in place of 
$V(d(x, y))$; see  \cite{CKKW} for a similar result   for  symmetric diffusions with tempered  jumps.  
 With almost the same proof of Lemma \ref{lemma24}, we can also check that $J$ satisfies conditions ${\bf (Jc)}_r$  and ${\rm \bf (Jt)}_{\phi, \bar r}$ for any $r, \bar r\in (0, \infty)$.
 Consequently,    by Theorem \ref{T:1.3},  the uniform BHP  for $X$ holds with radius $R $ on any (relative) open set of $\sX$
for any $R>0$. 
\end{example}

\begin{example}\label{E:4.6} \rm
 (Stable-like subordinate diffusions in metric measure spaces).
 
  Suppose that $(\sX,d,m)$ is an Ahlfors regular $d_f$-space for some $d_f>0$. Suppose that there is an $m$-symmetric diffusion process $Z_t$ on $\sX$ with 
a continuous transition density $p^Z(t,x,y)$ satisfying the sub-Gaussian bounds
\begin{equation}\label{e:4.8}
\frac{c_1}{t^{d_f /d_w}}\exp \left(-c_2\big(\frac{d(x,y)^{d_w}}t\big)^{1/(d_w-1)} \right)
\leq p^Z(t,x,y)\leq \frac{c_3}{t^{d_f/d_w}}\exp \left(-c_4\big(\frac{d(x,y)^{d_w}}t\big)^{1/(d_w-1)}\right), 
\end{equation}
where $d_w\geq 2$ is called the walk dimension. When $d_w=2$, the above is the classical Aronson's Gaussian estimate. 
It is known that the heat kernel estimates \eqref{e:4.8} hold for  $d_w>2$ on some fractal type spaces, such as
 the Sierpi\'nski gasket \cite{BaBa}, nested fractals \cite{FK,Ku}, or the generalized Sierpinski carpets \cite{BaBa2}. For $0<s<1$, let $\{S_t\}_{t>0}$ be an $s$-stable subordinator that is independent of $Z$.
The subordinate diffusion  $Z_{S_t}$ is a symmetric Feller  process of pure jump type that satisfying {\bf HK}$(r^{sd_w})$.  Hence, the uniform BHP holds by Corollary \ref{C:1.7}. 
Moreover, by the stability theorem of \cite{CKW}, {\bf HK}$(r^{sd_w})$ holds 
for any regular Dirichlet form $(\sE, \sF)$ on $ L^2(\sX;m)$ of pure jump type whose jump  kernel is comparable to that of the subordinate diffusion $\{X_t:=Z_{S_t}; t\geq 0\}$, that is, satisfying the condition ${\bf J}_\phi$ with $\phi (r)=r^{sd_w}$, then ${\bf HK} (r^{sd_w})$ holds for $(\sE, \sF)$ and so does the uniform BHP by Corollary \ref{C:1.7}. 
\end{example}

\medskip

\begin{example}\label{E:4.7} \rm
 (Stable-like non-local operators of non-divergence form on $\R^d$).

Let $d\geq 1$ and $0<\alpha<2$. 
Let $\kappa_\alpha (x, z)$ be a measurable function on $\R^d\times \R^d$ that is bounded between two positive constants.
When $\alpha=1$, we assume that for every $x\in \R^d$ and every $0<r<R<\infty$, 
\begin{equation}\label{e:4.9}
\int_{r<|z|<R} \frac{z \kappa_1 (x, z)}{|z|^{d+1}} dz =0
\end{equation} 
The above  condition is clearly satisfied if $\kappa_1 (x, z)$ is symmetric in $z$, that is, if 
$\kappa _1(x, z)=\kappa_1 (x, -z)$ for all $x, z\in \R^d$. For $0<\alpha<2$ and a bounded $\R^d$-valued function $b$ on $\R^d$, 
consider the following generator acting on $C^2_c(\R^d)$: 
\begin{equation}\label{e:sL}
 \sL f(x):=
   \int_{\R^d} \left(f(x+z)-f(x)-z^{(\alpha)}\cdot\nabla f(x)\right)\frac{\kappa_\alpha (x,z)}{|z|^{d+\alpha}   }
	d z
	+ \1_{\{1<\alpha<2\}} b(x) \cdot \nabla f (x)
\end{equation} 
where 
$$
z^{(\alpha)}= 
\begin{cases} 
0 \quad & \hbox{if } 0<\alpha <1, \\
z \1_{\{ |z|\leq 1\}} \quad & \hbox{if } \alpha =1, \\
z \quad & \hbox{if } \alpha >1.
\end{cases}
$$

Suppose that $X=(X_t, t\geq 0; \bP_x, x\in \R^d)$ is a conservative Hunt process on $\R^d$
having generator $\sL$ in the sense that for every $x\in \R^d$ and $f\in C_c^2(\R^d)$,
\begin{equation} \label{e:4.11a}
M^f_t:= f(X_t) -f(X_0)-\int_0^t \sL f(X_s) ds
\end{equation}
is a martingale under $\bP_x$.
As a special case of \cite[Theorems 1.1 and 1.5]{CZ1}, we know  that when $\kappa(x, z)$ is H\"older-continuous in $x$,
 there is a Feller process $X$ having $\sL$ as its generator having infinite lifetime.

It can be shown that  $X$ has L\'evy system $(\frac{\kappa_\alpha (x,  y-x)}{|y-x|^{d+\alpha}} dy, t)$;
see, e.g.,   \cite[Lemma 4.9]{CZ2}. Thus condition ${\bf J}_\phi$ holds with $\phi (r)=r^\alpha$ 
for $X$ with $m$ being the Lebesgue measure on $\R^d$. 
We now check that condition ${\bf (EP)}_{\phi, \leq }$ holds for $X$ as well when the drift $b=0$. 
Let $\psi \in  C^2_c(\R^d)$ so that $0\leq \psi \leq 1$ on $\R^d$, $\psi (0)=1$ and $\psi (x) =0$ for $|x|\geq 1$. 
For each $x_0\in \R^d$ and $r>0$, let $f(x)=\psi  ((x-x_0)/r)$. Then
\begin{eqnarray}\label{e:4.11}
\bP_{x_0} (\tau_{B(x_0, r)} \leq t) 
&\leq & \bE_{x_0} \left[ (1-f)(X_{\tau_{B(x_0, r)} \wedge t}) - (1-f)(X_0) \right] \nonumber  \\
&= &  - \bE_{x_0} \int_0^{\tau_{B(x_0, r)} \wedge t} \sL f(X_s) ds.
\end{eqnarray}
In view of the assumption \eqref{e:4.9} when $\alpha=1$, we have for any $0<\alpha<2$, 
\begin{eqnarray}
&& |\sL f(x)| \nonumber \\
&=& \Big| \int_{\R^d} \left(\psi (\frac{x+z-x_0}r)-\psi(\frac{x-x_0}r)-r^{-1}z^{(\alpha)}\cdot (\nabla \psi) (\frac{x-x_0}r)\right)\frac{\kappa(x,z)}{|z|^{d+\alpha}  }d  z  \nonumber \\
&& + \1_{1<\alpha<2} \, r^{-1} b(x) (\nabla \psi) (\frac{x-x_0}{r}) \Big|\nonumber \\
&\leq & c_\psi \int_{\R^d}  \left( \1_{0<\alpha<1} 
(\frac{|z|}{r} \wedge 1)  + \1_{\alpha=1}  (\frac{|z|^2}{r^2}\wedge 1)   +
\1_{1<\alpha<2}  \left( \frac{|z|^2}{r^2} \wedge \frac{|z|}{r} \right)\right) \frac{\kappa_\alpha (x, z)}{|z|^{d+\alpha} }d  z 
\nonumber \\
&&  + \1_{1<\alpha<2} \| b\|_\infty \|\nabla \psi \|_\infty /r  \nonumber \\
&=& c_\psi \int_{\R^d} \left(  \1_{0<\alpha<1} (|w|\wedge 1) + \1_{\alpha=1}  (|w|^2 \wedge 1)+
\1_{1<\alpha<2}  (|w|^2\wedge |w| ) \right)
\frac{\kappa_\alpha (x, rw)}{r^\alpha |w|^{d+\alpha} }d  w \nonumber \\
&& + \1_{1<\alpha<2} \| b\|_\infty \|\nabla \psi \|_\infty /r   \nonumber \\
&\leq & c_\psi c_\alpha /r^\alpha  + \1_{1<\alpha<2} \| b\|_\infty \|\nabla \psi \|_\infty /r , \label{e:4.12}
\end{eqnarray}
where in the second equality, we used a change of variable $z=rw$. 
Hence we conclude from \eqref{e:4.11}-\eqref{e:4.12} that when $b=0$, ${\bf (EP)}_{\phi, \leq} $ holds for $X$
and so the BHP holds for $X$ on any non-trivial open subset of $\R^d$.
Note that when $1<\alpha<2$ , for each $r_0>0$,  $1/r \leq r_0^{\alpha-1}/r^\alpha$
for every $r\in (0, r_0]$. Thus when there is a non-trivial drift for $\alpha \in (1, 2)$, we deduce from \eqref{e:4.11}-\eqref{e:4.12}  that for every $r_0>0$,
${\bf (EP)}_{\phi,r_0,\leq}$ holds with some constant $C$ depending on $r_0$. So by Corollary \ref{C:1.5}, the uniform BHP holds for $X$ 
 with radius $R=r_0$ on any non-trivial open subset of $\R^d$.  We point out that in this example, properties ${\bf J}_{\phi }$, ${\bf (EP)}_{\phi, \leq} $,
${\bf (EP)}_{r_0,\phi,\leq}$ and hence the BHP  depend only 
on the lower and upper bounds of the function $\kappa (x, z)$ and the upper bound of the drift function $b(x)$.
This example   significantly extends the BHP result in \cite{RS}  not only  from a non scale invariant BHP to the uniform BHP but also 
without assuming the function $\kappa_\alpha (x, z) $ being symmetric in $z$. 
\end{example}

\begin{example} \label{E:4.8} \rm
 (Mixed stable-like non-local operators of non-divergence form on $\R^d$).
 
 The above example can be extended to the mixed stable-like non-local operators of non-divergence form  
  considered in \cite{CZ2}. Suppose that $\phi$ is a strictly increasing function on $[0, \infty)$ that satisfies \eqref{eqnphi}. Consider the following generator  
\begin{equation}\label{e:sL2}
 \sL f(x):=
   \int_{\R^d} \left(f(x+z)-f(x)-z^{(\phi)}\cdot\nabla f(x)\right)\frac{\kappa  (x,z)}{|z|^d \phi (|z|) }d  z,
\end{equation} 
where,  as specified by  \cite[(1.4)]{CZ2},
\[
z^{(\phi)}  :=\begin{cases} 
0 \quad & \hbox{ for Case }1:\,\int_0^1\frac{dr}{\phi(r)}<\infty,  \\
z \1_{\{ |z|\leq 1\}} \quad & \hbox{ for Case }2:\,\int_0^1\frac{dr}{\phi(r)}=\infty\hbox{ and }\int_1^\infty\frac{dr}{\phi(r)}=\infty,\\
z \quad & \hbox{ for Case } 3:\,\int_0^1\frac{dr}{\phi(r)}=\infty\hbox{ and }\int_1^\infty\frac{dr}{\phi(r)}<\infty,
\end{cases}
\]
and $\kappa  (x,z)$ is a measurable function on $\R^d\times\R^d$ that is bounded between two positive functions, and for Case 2, we 
assume in addition that $\int_{r<|z|<R}z\kappa(x,z)dz=0$ for every $0<r<R<\infty$. We further assume the condition ($\bf A_\phi^{(1)}$) of \cite{CZ2},
\begin{equation}\label{e:4.14}
  c_\phi: =\sup_{r\in (0,1]}\int_0^\infty \left(\1_{\hbox{Case 1}}(s\wedge 1)+\1_{\hbox{Case 2}} (s^2 \wedge 1)+\1_{\hbox{Case 3}}(s^2\wedge s ) \right)\frac{\phi(r)ds}{s\phi(rs)}<\infty. 
\end{equation}
Suppose that $X$ is a conservative Hunt process on $\R^d$
having $\sL$ as its generator in the sense of \eqref{e:4.11a}. 
By the proof of  \cite[Lemma 4.9]{CZ2}, $X$ has L\'evy system $ \big( \frac{\kappa  (x, y-x)}{|y-x|^d \phi (|y-x|)}dy, t  \big)$.
Thus condition ${\bf J}_\phi$ holds. Moreover,  ${\bf (EP)}_{1, \phi,\leq}$ holds by \eqref{e:4.11}, \eqref{e:4.14} and the following estimate
for $f(x):=\psi  ((x-x_0)/r)$   with $r\leq 1$: 
\begin{eqnarray}
	&& |\sL   f  (x)| \nonumber \\
	&=& \Big| \int_{\R^d} \left(\psi \left(\frac{x+z-x_0}r \right)-\psi \left(\frac{x-x_0}r \right) -r^{-1}z^{ (\phi)}\cdot 
	\nabla \psi \left(\frac{x-x_0}r\right) \right)\frac{\kappa(x,z)}{|z|^d\phi(z)}dz  \nonumber \Big|\\
	&\leq & c_\psi \int_{\R^d}  \left( \1_{\hbox{Case 1}} 
	\left( \frac{|z|}{r} \wedge 1 \right)  + \1_{\hbox{Case 2}}  \left(\frac{|z|^2}{r^2}\wedge 1 \right)   +
	\1_{\hbox{Case 3}}\left(\frac{|z|^2}{r^2} \wedge \frac{|z|}{r} \right)\right) \frac{\kappa(x,z)}{|z|^d\phi(|z|)}dz 
	\nonumber \\
	&\leq & c_\psi \int_{\R^d}\left(\1_{\hbox{Case 1}}(|w|\wedge 1)+\1_{\hbox{Case 2}} (|w|^2 \wedge 1)+\1_{\hbox{Case 3}}(|w|^2\wedge |w| ) \right)
	\frac{\|\kappa\|_\infty}{|w|^{d}\phi(r|w|)}dw \nonumber\\
	&=&c_\psi c_d\|\kappa\|_\infty\int_0^\infty \left(\1_{\hbox{Case 1}}(s\wedge 1)+\1_{\hbox{Case 2}} (s^2 \wedge 1)+\1_{\hbox{Case 3}}(s^2\wedge s ) \right)\frac{ds}{s\phi(rs)} \nonumber\\
	&\leq&\frac{ c_\psi c_d\|\kappa\|_\infty}  {\phi(r)} 
	 \sup_{r\in (0,  1]}\int_0^\infty \left(\1_{\hbox{Case 1}}(s\wedge 1)+\1_{\hbox{Case 2}} (s^2 \wedge 1)+\1_{\hbox{Case 3}}(s^2\wedge s ) \right)\frac{\phi(r)ds}{s\phi(rs)}  \nonumber\\ 
	 & =&\frac{  c_\phi  c_\psi c_d\|\kappa\|_\infty}  {\phi(r)} .   \label{e:4.20} 
\end{eqnarray}
Consequently, by Corollary \ref{C:1.5}, the uniform BHP holds for $X$   with radius $R=1$ on any non-trivial open subset of $\R^d$. Moreover, if the estimate of \eqref{e:4.14} holds with $r\in (0,1]$ replaced with $r\in (0,\infty)$, ${\bf (EP)}_{\phi,\leq}$ holds.  In this case, by Corollary \ref{C:1.5},  the uniform  BHP holds for $X$  with radius $R=\infty$ 
on any non-trivial open subset of $\R^d$. We emphasize that the comparison constant in these invariant BHP depends on the coefficient $\kappa (x, z)$ only through its upper bound not on its modulo of continuity.\medskip

Now we give a sufficient condition on the non-local operator $\sL$ of \eqref{e:sL2} so that there is a   conservative Hunt process $X$ on $\R^d$
associated with it in the sense of \eqref{e:4.11a}.  
 Following \cite[Definition 1.2]{CZ2}, we call a strictly increasing continuous function
 $\ell: (0,1]\to(0,\infty)$ a Dini function if 
$$
\lim_{t\to 0}\ell(\lambda t)/\ell(t)=1 \quad \hbox{for every } \lambda>0
$$
and
$\int^1_0\frac{\ell(t)}{t} d t<\infty. $
Suppose that the function 
 $\kappa (x, z)$ in \eqref{e:sL2} satisfies that for some $\kappa_0\geq 1$ and Dini function $\ell$
so that for all $x, y, z\in \R^d$ with $|x-y|\leq 1$, 
\begin{align}\label{Con1}
\kappa^{-1}_0\leq \kappa(x,z)\leq \kappa_0 \quad \hbox{and} \quad 
 |\kappa(x,z)-\kappa(y,z)|\leq \ell^2(|x-y|).
\end{align}
As a special case of \cite[Theorem 1.4]{CZ2}, 
under condition \eqref{e:4.14} and 
\begin{align}\label{Con3}
 \int^t_0\frac{1}{r}\left(\frac{\ell(r)}{\ell(t)}+\frac{\phi(r)}{\phi(t)}\right) d \phi(r)<\infty \quad \hbox{for } t\in(0,1], 
\end{align} 
there is a conservative
Feller process having generator $\sL$. Moreover, ${\bf HK}(\phi, 1)$ holds. Consequently,  the uniform  BHP holds for such $\sL$ with radius $R=1$ on any non-trivial open subset of $\R^d$.
\end{example}

\begin{example} \label{E:4.9} \rm
 (Tempered mixed stable-like non-symmetric non-local operators  on $\R^d$).
 
 The last example can be extended to allow the jump kernel having  exponential decay,
 that is, to non-local operator
 \begin{equation}\label{e:4.22}
 \sL f(x):=
   \int_{\R^d} \left(f(x+z)-f(x)-z^{(\phi)}\cdot\nabla f(x)\right)\frac{\kappa(x,z)}{|z|^d \phi (|z|) \exp (\lambda |z|^\beta )}d  z,
\end{equation} 
where $\lambda >0$ and $\beta \in (0, 1]$.  Here $\phi$ and $\kappa (x, z)$ satisfy the same condition as in Example \ref{E:4.8}. 
 Suppose that $X$ is a conservative Hunt process on $\R^d$
having $\sL$ as its generator in the sense of \eqref{e:4.11a}. 
By the proof of  \cite[Lemma 4.9]{CZ2}, $X$ has L\'evy system 
$\big(\frac{\kappa  (x, y-x)}{|y-x|^d \phi (|y-x|) \exp (\lambda |y-z|^\beta )}  dy, t \big) $.
 It is easy to check (see Remark \ref{R:1.8}(i)) that $J$ satisfies conditions    ${\bf (Jc)}_r$  and ${\rm \bf (Jt)}_{\phi, \bar r}$
for any $r, \bar r\in (0, \infty)$.   
Moreover,  the estimate $\sL f(x)\leq \frac{c_\phi c_dc_\psi  \|\kappa\|_\infty}  {\phi(r)}$ holds by the same calculation as in \eqref{e:4.20}
for  $f(x):=\psi  ((x-x_0)/r)$   with $r\leq 1$, and so   ${\bf (EP)}_{\phi,1,\leq}$ holds by \eqref{e:4.11} and \eqref{e:4.14}. 
We conclude from Theorem \ref{T:1.3} that   the uniform BHP for $\sL$ holds with radius $R=1$ on any   open set of $\sX$.  

Under the same Dini condition \eqref{Con1}-\eqref{Con3} on $\kappa (x, z)$, there is a conservative
Feller process $X$ having  $\sL$ as its generator.  Indeed, as we see from Example \ref{E:4.8}, there is a 
conservative Feller process $\wt X$ having  generator 
 $$
 \wt \sL f(x):=
   \int_{\R^d} \left(f(x+z)-f(x)-z^{(\phi)}\cdot\nabla f(x)\right)\frac{\kappa(x,z)}{|z|^d \phi (|z|) \exp (\lambda (|z|\wedge 1)^\beta )}d  z,
$$
One can then construct a  conservative
Feller process $X$ having  $\sL$ as its generator from $\wt X$ through removing and adding jumps.  See
\cite{Ba, INW} as well as  \cite[\S 5.2]{CZZ} for details. 
\end{example}

\begin{example}\label{E:geostable}\rm  (Geometric  stable-like processes  on $\R^d$). 

In this example, we give a class of non-symmetric non-local operators that 
satisfy the conditions  $({\bf Jc})_{\bar{r}}$,  ${\rm \bf (Jt)}_{\phi,\bar r}$ and ${\bf(EP)}_{\phi,r_0,\leq}$  for some $\bar{r},r_0\in(0,\infty]$
and so the uniform BHP holds on any open sets but $\phi$ is not reverse doubling. 
This class of non-local operators includes the generators of geometric stable processes on $\R^d$. 
Let $d\geq 1$ and $\alpha \in (0, 2]$. 
A geometric $\alpha$-stable process is an isotropic  L\'evy process $X_t$ on $\R^d$ with  L\'evy exponent  $\Psi(\xi)=\log(1+|\xi|^{\alpha/2})$; that is,
$$
\bE_x \left[ e^{ i \xi \cdot ( X_t-X_0) } \right] = e^{-t \Psi  (\xi) } \quad \hbox{for every } \xi \in \R^d.
$$
 It is known the L\'evy measure of $X$ has a density $j_0( |z|)$ with respect to the Lebesgue measure on $\R^d$,
 and the infinitesimal generator $\sL_0$ of $X$ is given by
 $$
  	\mathcal{L}_0 f(x):=    \int_{\R^d} (f(x+z) - f(x))j_0 (|z|)dz\quad\hbox{ for }x\in\R^d\hbox{ and }f\in C_c^2(\R^d),
$$
 By \cite[Theorems 3.4 and 3.6]{SSV}, $j_0(r)$ is comparable to $j(r)$ on $(0, \infty)$, where 
\begin{equation}
	j(r) :=\frac{1}{r^d}\wedge \frac{1}{ r^{d+\alpha}} . 
\end{equation}

	We now consider a strong Markov process $X$ with its generator $\mathcal{L}$ given by 
	\[
	\mathcal{L}f(x):= \int_{\R^d} (f(x+z) - f(x))\kappa(x,z)j(|z|)dz\quad\hbox{ for }x\in\R^d\hbox{ and }f\in C_c^2(\R^d),
	\] 
	where $\kappa (x, z)$ is bounded between two positive constants and symmetric in $z$.  
	By a proof similar to that  of  \cite[Lemma 4.9]{CZ2}, one can show that 
	  $(\kappa(x,y-x)j(|y-z|)dy,t)$ is a L\'evy system of $X$. Note that this class of processes include the geometric stable processes.

	\smallskip 
	
	We next show that the uniform BHP holds on any open domains with Radius $R=\infty$ when $\alpha\in(0, 2)$, and holds on any open domains with Radius $R$ for any $R\geq 1$ when  $\alpha=2$. 
	
    First, ${\bf (Jc)}$ holds by Remark \ref{R:1.8}(ii). One  can easily check that ${\bf (Jt)}_{\phi}$ holds with  
	\[
	\phi(r) :=\frac1{1-\log(r\wedge1)}\wedge r^\alpha, \quad r>0.
	\]	
	Observe that the increasing function $\phi$ has doubling property  but  is not reverse doubling. 
	Finally, we  show ${\bf EP}_{\phi,\leq}$ with \eqref{e:4.11a} for $\alpha<2$. Let $\psi \in  C^2_c(\R^d)$ so that $0\leq \psi \leq 1$ on $\R^d$, $\psi (0)=1$ and $\psi (x) =0$ for $|x|\geq 1$. 
	For each $x_0\in \R^d$ and $r>0$, let $f(x)=\psi  ((x-x_0)/r)$.
	Note that for $x\in\R^d$,  
 	\begin{align}
		  |\mathcal{L}f(x)| 
		&\leq  \frac{\|\kappa\|_\infty}{2} \int_{\R^d}  |f(x+z)+f(x-z)-2f(x)|j(|z|)dz  \nonumber \\
		&\leq c_\psi\|\kappa\|_\infty \int_{\R^d} \left(  (|z|^2/r^2) \wedge 1\right) j(|z|) dz . \label{e:4.25a}
		\end{align}
	So when $r\in (0, 1]$, 
\begin{align*}
  |\mathcal{L}f(x)| 
   &\leq   c_\psi\|\kappa\|_\infty \left(  \int_{0<|z|<r}  \frac{|z|^2}{r^2|z|^d} dz + \int_{r\leq |z|<1} \frac{1}{|z|^d}  dz  + \int_{|z|>1} \frac{1}{|z|^{d+\alpha}} dz\right)\\
   &\leq c_dc_\psi\|\kappa\|_\infty\left(\int_0^r\frac{2 s}{r^2}ds + \int_r^1\frac1sds + \int_1^\infty \frac{1}{s^{1+\alpha}} ds\right)\\
   &=c_dc_\psi\|\kappa\|_\infty(1-\log r+\alpha^{-1})\\
   &\leq c_2 /\phi(r); 
\end{align*}
while for $r\in (1, \infty)$, 	
\begin{align*}
  |\mathcal{L}f(x)| 
   &\leq   c_\psi\|\kappa\|_\infty \left(   \int_{0<|z|\leq 1}  \frac{|z|^2}{r^2 |z|^d} dz   + \int_{1<|z|\leq r } \frac{|z|^2}{r^2|z|^{d+\alpha}} dz
   + \int_{ |z|>  r } \frac{1}{ |z|^{d+\alpha}} dz 
   \right)\\
&\leq \begin{cases} 
c_{d, \alpha} c_\psi / r^\alpha = c_3/\phi (r)  &\hbox{for } \alpha \in (0, 2), \\
c_{d, 2} c_\psi    (1+ \ln r ) / r^2 &\hbox{for } \alpha =2.  \\
\end{cases}
\end{align*}
In summary,  we have    
$$
|\mathcal{L}f (x) |  \leq 
 \begin{cases} 
 c_4/\phi (r)  &\hbox{for  all } r>0 \hbox{ when } \alpha \in (0, 2), \\
c_4/\phi (r)  &  \hbox{for  all } r\in (0, r_0] \hbox{ when } \alpha=2.    \\
\end{cases} 
$$
 Thus we have by \eqref{e:4.11} that ${\bf EP}_{\phi,\leq}$ holds when $\alpha \in (0, 2)$
 and ${\bf EP}_{\phi, r_0, \leq}$ holds for every $r_0\geq 1$.  
Thus by Theorem \ref{T:1.3}, the uniform BHP holds with radius $R=\infty$ for $X$ when $\alpha \in (0, 2)$,
and  uniform BHP holds with radius $R$ for $X$ for any $R\geq 1$ when $\alpha=2$.

This substantially extends the scale invariant BHP for geometric stable processes on $\R^d$ obtained in \cite{GR} for intervals on $\R$ when $d=1$ and in \cite{KSV4} for unbounded domain in $\R^d$ at infinity when $d\geq 2$. 
\end{example}

 \begin{example} \label{E:4.10} \rm
(SDEs driven by L\'evy processes  on $\R^d$). 

Let  $Z$ be a pure jump isotropic L\'evy process on $\R^d$ with L\'evy measure $j(|z|) dz$  
with 
\begin{equation} \label{e:4.24} 
 \frac{c_1^{-1} }{|x|^d \Phi (|x|)} \le j(|x|)   \le  \frac{c_1 }{|x|^d \Phi (|x|)}  \quad \text{for all }x\neq0.
\end{equation} 
for some constant $c_1>1$.
Here $\Phi$  is an increasing function   on $[0, \infty)$ satisfying 
\begin{equation}\label{e:4.25}
c_2^{-1} \left(\frac{R}{r} \right)^{\beta_1} \leq \frac{\Phi (R ) }{\Phi (r ) } \leq
c_2\left(\frac{R}{r} \right)^{ \beta }
\quad \hbox{for any } R\geq r>0.
\end{equation}
for some constants  $c_2>0$ and  $0<\beta_1\leq  \beta <2$.

Suppose that $X$ is  a strong Markov process on $\R^d$ that satisfies 
$$
dX_t = \sigma (X_{t-}) dZ_t,
$$
 where $\sigma(x)$ is a $d\times d$ matrix-valued function on $\R^d$ that is uniformly elliptic and bounded. 
For $t>0$, denote    $\Delta Z_t:=Z_t-Z_{t-}$ and  $\Delta X_t:=X_t-X_{t-}$. 
Define 
$$
\bar Z_t = Z_t -\sum_{0<s\leq t}  \Delta Z_s  {\mathbbm 1}_{\{|\Delta_s|>1\}},
$$
 which is an isotropic L\'evy process on $\R^d$ with
L\'evy measure ${\mathbbm 1}_{\{ |z|\leq 1\}}  j(|z|) dz$. In particular, $\bar Z$ is an
$\R^d$-valued  square-integrable martingale.
 By Ito's formula, for $f\in C^2_b(\R^d)$,
 \begin{eqnarray*}
 f(X_t)- f(X_0)  &=&   \int_0^t \nabla  f(X_{s-}) \cdot dX_s
+\sum_{0<s\leq t} \Big( f(X_s)-f(X_{s-})-\nabla  f(X_{s-}) 
  \cdot \Delta X_s \Big)\\
&=& \int_0^t \nabla  f(X_{s-}) \sigma (X_{s-}) dZ_s
+\sum_{0< s\leq t} \Big( f(X_{s-}+\sigma (X_{s-}) \Delta Z_s) \\
&& \quad   -f(X_{s-})-\nabla  f(X_{s-})\cdot  \sigma (X_{s-})\Delta Z_s \Big)\\
&=& \int_0^t \nabla  f(X_{s-}) \sigma (X_{s-}) d\bar  Z_s
+\sum_{0< s \leq t} \Big( f(X_{s-}+\sigma (X_{s-})\Delta Z_s) \\
&& \quad  -f(X_{s-})-\nabla  f(X_{s-})\cdot
\sigma (X_{s-})\Delta Z_s \1_{\{|\Delta Z_s|\leq 1\}} \Big) .\\
&=&  {\rm martingale} + \int_0^t \sL f (X_s) ds,
  \end{eqnarray*}
 where we used the L\'evy system of $Z$ in the last equality  with  
 $$
 \sL f(x):= \int_{\R^d} \left( f(x+\sigma (x)z) -f(x) -\nabla f(x) \cdot \sigma (x) z {\mathbbm 1}_{\{|z|\leq 1\}}\right) j(|z|) dz.
$$
This shows that the strong Markov process has generator $\sL$. By a change of variable $w=\sigma (x) z$,
\begin{eqnarray*}
\sL f(x)&:=&  \int_{\R^d} \left( f(x+w) -f(x) -\nabla f(x) \cdot w {\mathbbm 1}_{\{| \sigma (x)^{-1} w|\leq 1\}}\right) j(| \sigma(x)^{-1}w |)
{\rm det}(\sigma(x)^{-1} ) dw \\
&=& \int_{\R^d} \left( f(x+w) -f(x) -\nabla f(x) \cdot w {\mathbbm 1}_{\{|   w|\leq 1\}}\right) j(| \sigma(x)^{-1}w |)
{\rm det}(\sigma(x)^{-1} ) dw .
\end{eqnarray*}
Here $\sigma (x)^{-1}$ is the inverse matrix of $\sigma (x)$ and ${\rm det}(\sigma(x)^{-1} )$ is the determinant of $\sigma (x)^{-1}$. 
Define
$$
 \kappa (x, w):= \frac{ j(| \sigma(x)^{-1}w|) {\rm det}(\sigma(x)^{-1} )}{|w|^d \Phi (|w|)}  .
$$
Since $\sigma (x)$ is uniformly elliptic and bounded, it follows from \eqref{e:4.24} and \eqref{e:4.25}
that $\kappa (x, w)$ is bounded between two positive
constants and $\kappa (x, w)= \kappa (x, -w)$.  By the proof of  \cite[Lemma 4.9]{CZ2}, the strong Markov process
$X$ has L\'evy system $ \big( \frac{\kappa  (x, y-x)}{|y-x|^d \Phi (|y-x|)}dy, t  \big)$ so condition ${\bf J}_\Phi$ holds.
Moreover, by a similar argument as that in Example \ref{E:4.8} via \eqref{e:4.20}, 
one can show that ${\bf (EP)}_{\phi,1,\leq}$ holds. Hence we have by Corollary \ref{C:1.5} that 
the uniform BHP holds for $X$   with radius $R=1$
on any non-trivial open subset of $\R^d$. 

When $\Phi (r)=r^\alpha$ for some $0<\alpha<2$, condition ${\bf (EP)}_{ \phi,\leq}$ holds as 
  the estimate of \eqref{e:4.14} holds with $r\in (0,1]$ replaced with $r\in (0,\infty)$.
Consequently  we have by Corollary \ref{C:1.5} that 
the uniform BHP holds for $X$   with radius $R=\infty$
on any non-trivial open subset of $\R^d$. 
In particular, this is the case when  $Z$ is an isotropic $\alpha$-stable process on $\R^d$. 
  \end{example}

\bigskip

\vskip 0.2truein 

\end{document}